\newcommand{\Z}{\mathbb{Z}}
\newcommand{\Q}{\mathbb{Q}}
\newcommand{\R}{\mathbb{R}}
\newcommand{\C}{\mathbb{C}}
\newcommand{\uq}{U_q'(\mathfrak{so}_3)}
\newcommand{\uqq}{U_q'(\mathfrak{so}_4)}
\newcommand{\uqqq}{U_q'(\mathfrak{so}_n)}
\newcommand{\inv}{^{-1}}
\newcommand{\m}{\mathcal{M}}
\newcommand{\jfin}{\mathcal{J}^{\text{fin}}(A)}
\newcommand{\uson}{U(\mathfrak{so}_n)}
\newtheorem{theorem}{Theorem}[section]
\newtheorem{lemma}[theorem]{Lemma}
\newtheorem*{claim*}{Claim}
\newtheorem{corollary}[theorem]{Corollary}
\newtheorem{proposition}[theorem]{Proposition}
\newtheorem{thmintro}{Theorem}
\theoremstyle{definition}
\newtheorem{definition}[theorem]{Definition}
\newtheorem{example}[theorem]{Example}
\begin{document}
\title{Generic Gelfand-Tsetlin Modules of Quantized and Classical Orthogonal Algebras} 
\author{Jordan Disch}
\affil{Department of Mathematics, Iowa State University}
\date{}
\maketitle

\begin{abstract}
We construct infinite-dimensional analogues of finite-dimensional simple modules of the nonstandard $q$-deformed enveloping algebra $\uqqq$ defined by Gavrilik and Klimyk, and we do the same for the classical universal enveloping algebra $\uson$. In this paper we only consider the case when $q$ is not a root of unity, and $q\to 1$ for the classical case. Extending work by Mazorchuk on $\mathfrak{so}_n$, we provide rational matrix coefficients for these infinite-dimensional modules of both $\uqqq$ and $\uson$. We use these modules with rationalized formulas to embed the respective algebras into skew group algebras of shift operators. Casimir elements of $\uqqq$ were given by Gavrilik and Iorgov, and we consider the commutative subalgebra $\Gamma\subset\uqqq$ generated by these elements and the corresponding subalgebra $\Gamma_1\subset\uson$. The images of $\Gamma$ and $\Gamma_1$ under their respective embeddings into skew group algebras are equal to invariant algebras under certain group actions. We use these facts to show that $\Gamma$ is a Harish-Chandra subalgebra of $\uqqq$ and $\Gamma_1$ is a Harish-Chandra subalgebra of $\uson$.
\end{abstract}

\begin{section}{Introduction} \label{intro}
In \cite{gav}, Gavrilik and Klimyk gave finite-dimensional simple modules of the algebra $\uqqq$, a $q$-deformation of the universal enveloping algebra $\uson$ which differs from the Drinfeld-Jimbo quantum group $U_q(\mathfrak{so}_n)$. In fact, $\uqqq$ is not a Hopf algebra, but rather a coideal subalgebra of the Drinfeld-Jimbo quantum group $U_q(\mathfrak{gl}_n)$, \cite{mol}. Moreover, $\uqqq$ is an example of a generalization of quantum groups called $\imath$quantum groups, \cite{wang}. $\uqqq$ and $U_q(\mathfrak{gl}_n)$ are realized as a quantum symmetric pair in \cite{nou}, \cite{let}. There are two crucial advantages of the algebra $\uqqq$ compared to the Drinfeld-Jimbo version. First, there exists a natural chain of subalgebras:
\begin{equation*}
    U_q'(\mathfrak{so}_2)\subset \uq \subset \cdots \subset \uqqq.
\end{equation*}\noindent Second, the algebra $\uqqq$ is defined for all nonzero complex numbers $q$, including $q=1$ when it becomes isomorphic to $\uson$.

The finite-dimensional simple modules of $\uqqq$ given in \cite{gav} correspond to finite-dimensional simple modules of $\uson$ when $q\to 1$, thus these modules are called \emph{classical}, \cite{nonclassical}. As is the case for $\uson$, the classical modules of $\uqqq$ are parameterized by combinatorial patterns called \emph{Gelfand-Tsetlin patterns} (GT patterns). These patterns are the same for finite-dimensional simple modules of $\uson$ and the classical modules of $\uqqq$, and they are given in \cite{gz}. Ueno et al. rationalized the formulas for finite-dimensional simple modules of $U_q(\mathfrak{gl}_n)$ in \cite{kim}, which Mazorchuk and Turowska used to construct infinite-dimensional analogues of finite-dimensional modules of $U_q(\mathfrak{gl}_n)$ in \cite{maz}, calling these modules \emph{generic Gelfand-Tsetlin modules} (GT modules). Generic GT modules were constructed by the author in \cite{disch} for $\uq$ and $\uqq$, and it was shown that the modules have finite length. (We do not discuss the length of the modules constructed in this paper, though we conjecture that they have finite length for general $n$.) The term ``generic" refers to the fact that the patterns that parameterize these modules are unrestricted from the integral and interlacing conditions on the entries required in GT patterns. A \emph{GT module} of an algebra $U$ is a module that is a direct sum of common generalized eigenspaces of a certain commutative subalgebra $\Gamma\subset U$, \cite{dfo}. Mazorchuk constructed generic GT modules of $\uson$ in \cite[Theorem~1]{ma}. We extend his work in this paper to include the quantum case, and fully rationalized matrix coefficients for both the quantum and classical cases. Following \cite{dfo} and \cite{maz}, we use rationalized matrix coefficients to prove the existence of generic GT modules. This brings us to the first main theorem of this paper. Let $N$ be the number of boxes in a GT pattern for $\mathfrak{so}_n$. As in \cite{maz}, we call a pattern \emph{admissible} if it belongs to the complement of a certain countable union of affine hyperplanes in $\C^N$ (see Section \ref{exist_gen_gz_mods} for details).
\begin{thmintro} \label{gen_gz_mods_intro}
    There exists a family of generic GT modules $V_{\alpha^0}^{m_n}$ of $\uqqq$ and $\uson$ parameterized by admissible patterns. Explicit bases and actions for $V_{\alpha^0}^{m_n}$ are given in Theorem \ref{existence_gen_n}. Furthermore, the intersection of annihilators of all these modules $V_{\alpha^0}^{m_n}$ is zero in both $\uqqq$ and $\uson$.
\end{thmintro}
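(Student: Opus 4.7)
The plan is to derive both claims from a single Zariski-density principle applied to the explicit bases and actions supplied by Theorem \ref{existence_gen_n}. Those formulas arise by rescaling the classical Gavrilik--Klimyk basis (respectively the classical Gelfand--Tsetlin basis for $\uson$) so that each matrix coefficient of a generator becomes a rational function of the entries of the indexing pattern. Admissibility of $\alpha^0 \in \C^N$ is precisely the condition that no denominator in these formulas vanishes at any lattice point $\alpha^0 + v$ with $v \in \Z^N$ (the top row $m_n$ being fixed), which excludes a countable union of affine hyperplanes in $\C^N$.

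First I would verify that these assignments actually define a module. For any defining relation $r$ of $\uqqq$ (or $\uson$) and any pair of basis vectors $v_{\alpha^0+v}$, $v_{\alpha^0+v'}$, the coefficient of $v_{\alpha^0+v'}$ in $r \cdot v_{\alpha^0+v}$ is a rational function in the coordinates of $\alpha^0$. On the set of integer patterns with strictly interlacing entries (with top row large enough that the shifted patterns also interlace) the action reduces, after undoing the rescaling, to Gavrilik--Klimyk's honest finite-dimensional classical representation, so this rational function vanishes there. That set is Zariski-dense in $\C^N$, so the rational function vanishes identically, and the relation holds at every admissible $\alpha^0$.

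For the annihilator statement I would run the same density argument in reverse. If $X \in \uqqq$ annihilates every $V_{\alpha^0}^{m_n}$ for admissible $\alpha^0$, then each matrix coefficient of $X$ in the fixed basis is a rational function of $\alpha^0$ vanishing on a Zariski-dense set, hence identically zero. Specializing to integer GT patterns then shows that $X$ acts as zero on every classical finite-dimensional simple module of $\uqqq$, and since these modules separate elements when $q$ is not a root of unity we get $X = 0$. The argument for $\uson$ is identical, using the classical fact that finite-dimensional simple $\uson$-modules are separating.

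The main obstacle I expect is the rationalization step feeding into Theorem \ref{existence_gen_n}: the Gavrilik--Klimyk formulas involve square roots of products of $q$-integers in the pattern entries, and one must choose a rescaling of the GT basis in the spirit of the rationalization in \cite{kim} (and of Mazorchuk's classical treatment \cite{ma}) that turns every matrix coefficient into an explicit rational function and pins down exactly which hyperplanes must be excluded. Once this combinatorial bookkeeping is in place, both halves of the theorem follow uniformly from the density principle above, with the classical statement for $\uson$ recovered either via the $q \to 1$ limit or by direct application of the same argument to the classical Gelfand--Tsetlin formulas.
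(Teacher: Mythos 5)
Your overall strategy --- rationalize the matrix coefficients, verify the module relations by Zariski density on the locus of honest GT patterns, and then reverse the density argument for the annihilator statement --- is exactly what the paper does, and your second paragraph correctly reconstructs the proof of Theorem \ref{existence_gen_n}. However, there is a genuine gap in your treatment of the annihilator statement for $\uqqq$.

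You write that once $X$ is seen to annihilate every classical finite-dimensional simple module of $\uqqq$, we can conclude $X = 0$ ``since these modules separate elements when $q$ is not a root of unity.'' That is not a known fact and is precisely the nontrivial point. The residual finiteness result that is available (Proposition \ref{res_fin}) only says that the intersection of annihilators over \emph{all} finite-dimensional modules is zero, and by the classification theorem (Theorem \ref{classification_fd}) the finite-dimensional simple modules of $\uqqq$ split into two families: the classical ones (limits of $\uson$-modules as $q \to 1$) and the \emph{nonclassical} ones, which have singularities at $q = 1$ and are not captured by your construction. Your generic GT modules are built from the classical family only, so annihilating them a priori tells you nothing about the action of $X$ on the nonclassical modules. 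The paper closes this gap with Lemma \ref{ann_noncl}: the nonclassical formulas are obtained from the (rationalized) classical ones by shifting the pattern entries by complex numbers of the form $\pm i\pi/2h$, so the matrix coefficients of $X$ on nonclassical modules are the very same rational functions evaluated at shifted arguments; once those rational functions are identically zero, $X$ kills the nonclassical modules as well, and only then can residual finiteness and semisimplicity of finite-dimensional modules be invoked to get $X = 0$. For $\uson$ there is no such issue, since Harish-Chandra's theorem applies directly, and your argument there is fine. You should also note that the paper is careful about which Zariski-dense set is used at each stage (integer/half-integer GT patterns versus the reciprocal-prime admissible patterns of Example \ref{gen_gz_ex}), because the annihilator argument has to pass from a statement about admissible patterns to a statement about classical patterns, not the other way around.
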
\noindent Note that in this paper, as in \cite{dfo} and \cite{maz}, the basis vectors of $V_{\alpha^0}^{m_n}$ are common eigenvectors of $\Gamma$ rather than merely being generalized eigenvectors. 

It was shown by Futorny and Ovsienko in \cite[Proposition~7.2]{fo10} that the universal enveloping algebra $U(\mathfrak{gl}_n)$ is isomorphic to a subalgebra of a skew group algebra. More precisely, they give an embedding from $U(\mathfrak{gl}_n)$ to the invariant subalgebra of a skew group algebra under the action of a certain group. This was done to show that $U(\mathfrak{gl}_n)$ is a Galois order. Similarly, Futorny and Hartwig showed that the quantum group $U_q(\mathfrak{gl}_n)$ embeds into a skew group algebra in \cite[Proposition~5.9]{fut_hartwig} and show that it is a Galois ring, and furthermore it was shown by Hartwig that $U_q(\mathfrak{gl}_n)$ is a Galois order in \cite{hart}. Though we do not prove in this paper that $\uqqq$ or $\uson$ are Galois orders nor Galois rings, we conjecture this, and we do embed them into skew group algebras. The images of these embeddings are not contained in the invariant subalgebra under our particular group action on the skew group algebra, but we do obtain a similar result for commutative subalgebras $\Gamma$ and $\Gamma_1$ of $\uqqq$ and $\uson$, respectively, generated by Casimir elements from \cite{cas}.

In more detail, let $\Lambda$ be a Laurent polynomial algebra with number of generators equal to the number of entries in a GT pattern for $\uqqq$, and $L:=\operatorname{Frac}\Lambda$. Likewise, let $\Lambda_1$ be a polynomial algebra with the same number of generators as in $\Lambda$, and $L_1:=\operatorname{Frac}\Lambda_1$. We define $\m$ to be a free abelian group with number of generators equal to the number of entries in a GT pattern with the top row deleted, and consider skew group algebras $L\#\m$ and $L_1\#\m$, where $\m$ acts on $L_1$ ($L$) by ($q$-)shift operators. We define the group $G=W_2^q\times\dots\times W_n^q$ where each $W_i^q$ is an extension of the Weyl group of the quantum group $U_q(\mathfrak{so}_i)$ \cite[Theorem~9.1.6]{chari} for $q\neq 1$, and each $W_i^1$ is the Weyl group of $\uson$. Explicitly,
\begin{equation*}
    W_i^q=\begin{cases}
    (C_2^q)^k\rtimes S_k, & i=2k+1\\
    (C_2^q)^k_{\text{even}}\rtimes S_k, & i=2k
    \end{cases},
\end{equation*}
\begin{equation*}
    C_2^q=\begin{cases}
    (\Z/2\Z)^2, & q\neq 1\\
    \Z/2\Z, & q=1
    \end{cases},
\end{equation*}and $(C_2^q)^k_{\text{even}}$ is the subgroup of $(C_2^q)^k$ consisting of an even number of sign changes. This brings us to the second main result of this paper.
\begin{thmintro} \label{embedding_intro}
Let $\Gamma$ be the subalgebra of $\uqqq$ generated by the Gavrilik-Iorgov Casimirs of $U_q'(\mathfrak{so}_i)$, $i=2,...,n$, and $\Gamma_1\subset\uson$ the corresponding subalgebra when $q\to 1$. The following are true:
    \begin{enumerate}[{\rm (i)}]
    \item \label{b_pt_i} There exist injective algebra homomorphisms $\varphi:\uqqq\hookrightarrow L\#\m$ and $\varphi_1:\uson\hookrightarrow L_1\#\m$.
    \item \label{b_pt_ii} $\varphi(\Gamma)=\Lambda^G$ and $\varphi_1(\Gamma_1)=\Lambda_1^G$.
    \end{enumerate}
\end{thmintro}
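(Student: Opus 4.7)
My plan is to build both $\varphi$ and $\varphi_1$ directly from the rationalized matrix coefficients furnished by Theorem \ref{gen_gz_mods_intro}. If $X$ is a generator of $\uqqq$, the action of $X$ on a basis vector $v_L$ of a generic GT module $V_{\alpha^0}^{m_n}$ has the form
\begin{equation*}
    X\cdot v_L = \sum_{\mu} c_{X,\mu}(L)\,v_{L+\mu},
\end{equation*}
where the sum is over finitely many shifts $\mu\in\m$ and each $c_{X,\mu}$ is a rational function of the entries of $L$ (respectively, of their $q$-exponentials in the quantum case). I would read off $\varphi(X)=\sum_\mu c_{X,\mu}\,\mu \in L\#\m$, viewing $c_{X,\mu}$ as an element of $L=\operatorname{Frac}\Lambda$ and $\mu$ as the corresponding shift element of $\m$, and extend to all of $\uqqq$ multiplicatively. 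The same recipe, with additive shifts in place of $q$-shifts and $\Lambda_1$ in place of $\Lambda$, defines $\varphi_1:\uson\to L_1\#\m$.

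For part (\ref{b_pt_i}) I need to check that $\varphi$ is well-defined, is a homomorphism, and is injective. Verifying the defining relations of $\uqqq$ under $\varphi$ reduces to identities of rational functions in $L\#\m$; each such identity holds whenever the variables are specialized to an admissible pattern, and since the admissible locus is the complement of a countable union of affine hyperplanes in $\C^N$ and hence Zariski-dense, the identity holds in $L\#\m$. Injectivity is then immediate from Theorem \ref{gen_gz_mods_intro}: if $\varphi(u)=0$, then $u$ annihilates every $V_{\alpha^0}^{m_n}$, so by the intersection-of-annihilators statement $u=0$. The argument for $\varphi_1$ is verbatim.

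For the inclusion $\varphi(\Gamma)\subseteq\Lambda^G$ in part (\ref{b_pt_ii}), the Gavrilik-Iorgov Casimir of $U_q'(\mathfrak{so}_i)$ is central in $U_q'(\mathfrak{so}_i)$ and so acts as a scalar on each basis vector $v_L$ of a generic GT module. This scalar is a symmetric (Laurent) polynomial in the $q$-shifted entries of the $i$-th row of $L$. Consequently the image under $\varphi$ has trivial shift component and lies in $\Lambda$; the symmetry and parity of this polynomial match exactly the $S_k$-permutation and $C_2^q$-sign-change action defining $W_i^q$, while the factors $W_j^q$ for $j\neq i$ act on rows disjoint from the $i$-th row and therefore fix the image. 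Thus $\varphi(\Gamma)\subseteq\Lambda^G$, and the same computation works for $\varphi_1$ in the $q\to 1$ limit.

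The main obstacle is the reverse inclusion $\Lambda^G\subseteq\varphi(\Gamma)$. Since the groups $W_i^q$ act on disjoint sets of variables, $\Lambda^G$ decomposes as a tensor product $\bigotimes_{i=2}^n \Lambda_i^{W_i^q}$, so it suffices to show that the $\varphi$-images of the Casimirs of $U_q'(\mathfrak{so}_i)$ generate $\Lambda_i^{W_i^q}$ for each $i$. In the classical case $\Lambda_{1,i}$ is a polynomial algebra and $W_i^1$ is the Weyl group of $\mathfrak{so}_i$, so this reduces to the Harish-Chandra isomorphism for $\mathfrak{so}_i$. In the quantum case I would extract leading terms (in the $q$-degree) of the Gavrilik-Iorgov formulas of \cite{cas} to exhibit enough algebraically independent generators, and then complete the argument by a dimension count together with a specialization-to-$q=1$/flatness argument that transfers generation from the classical case. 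Establishing this quantum Harish-Chandra-type statement for each $U_q'(\mathfrak{so}_i)$ is where I expect the bulk of the technical work to live.
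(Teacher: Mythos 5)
Part~(\ref{b_pt_i}) of your proposal is essentially the paper's argument: the paper sets up a commutative diagram through the free algebra and the product representation $\pi$ on the direct product $V$ of all generic GT modules, with a map $\psi$ into $\Lambda_S\#\m$, and defines $\varphi:=\rho^{-1}\circ\pi$; injectivity of $\pi$ is exactly your ``intersection of annihilators is zero,'' and well-definedness is exactly your Zariski-density check. That part is fine.

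The forward inclusion $\varphi(\Gamma)\subseteq\Lambda^G$ in part~(\ref{b_pt_ii}) is also correct and matches the paper. The gap is in the reverse inclusion, and the plan you sketch for it has two real problems. First, for the classical case you propose reducing to the Harish-Chandra isomorphism for $\mathfrak{so}_i$, but $\Gamma_1^{(i)}$ is \emph{not} defined as the full center of $U(\mathfrak{so}_i)$: it is generated only by the specific Gavrilik--Iorgov elements $C_i^{(2d)}$ (and $C_i^{(i)+}$ when $i$ is even). The HC isomorphism tells you $Z(U(\mathfrak{so}_i))$ maps onto $W_i$-invariants, but it does not tell you these particular generators already span the image; that is precisely what has to be proven, and it is a nontrivial part of the paper's argument. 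Even granting it, you would still have to verify that $\varphi_1$ restricted to $\Gamma_1^{(i)}$ coincides (up to the $\rho$-shift) with the Harish-Chandra homomorphism, which is again not automatic from the setup. Second, for the quantum case, ``extract leading terms, count dimensions, and use a flatness/specialization argument'' is not developed enough to count as a proof, and it is not how the paper proceeds; there is no general quantum Harish-Chandra theorem being invoked.

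What actually makes the reverse inclusion tractable — and what your plan is missing — is the \emph{explicit} form of the Casimir eigenvalues from \cite{cas} (Theorem~\ref{cas_eigvals}): they are generalized factorial elementary symmetric polynomials. Using this, the paper writes $\varphi(C_n^{(2d)})$ as a generalized factorial elementary symmetric polynomial in the variables $b_{nj}:=(z'_{nj})^2+(z'_{nj})^{-2}$, and then shows by strong induction on $d$ (using Newton's theorem to absorb the lower-degree terms) that the ordinary elementary symmetric polynomials $e_d(b_{n1},\dots,b_{nk})$ all lie in $\varphi(\Gamma^{(n)})$. For $n=2k+1$ this finishes the argument immediately, since $\tau$-invariance forces the Laurent exponents to be even, $\sigma$-invariance forces $b_{nj}$-dependence, and $S_k$-invariance plus Newton gives generation by the $e_d$'s. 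For $n=2k$ you must additionally split a $W_n^q$-invariant $f$ into its even and odd part $f=f_{\mathrm{even}}+f_{\mathrm{odd}}$ (since the sign-change group is only the even subgroup) and prove $f_{\mathrm{odd}}$ is divisible by the image of $C_n^{(n)+}$, namely $\prod_j(z'_{nj}-(z'_{nj})^{-1})$, with an invariant quotient. You do not mention this even/odd subtlety, and without it the $n$-even case does not close. So while the structure of your argument is reasonable at the top level, the core of part~(\ref{b_pt_ii}) — showing the specific Gavrilik--Iorgov generators actually produce the full invariant ring — is not established.
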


Drozd, Futorny, and Ovsienko defined the notion of a \emph{Harish-Chandra subalgebra} and from there established the study of Harish-Chandra categories in \cite{dfo}. For simplicity, the definition of a Harish-Chandra subalgebra we use in this paper is restricted to the case in which the subalgebra $C$ of $U$ is commutative. Thus we say a Harish-Chandra subalgebra $C\subset U$ is a subalgebra such that $CuC$ is finitely generated as a left and right $C$-module for all $u\in U$. This property is called ``quasi-central" in \cite{dfo}. Mazorchuk and Turowska show that the GT subalgebra of $U_q(\mathfrak{gl}_n)$ is a Harish-Chandra subalgebra in \cite[Proposition~1]{maz}. Mazorchuk also stated a similar result for $\uson$ in \cite[Remark~1]{ma}. Our final main theorem of this paper enables the future study of Harish-Chandra categories of $\uqqq$ and $\uson$.
\begin{thmintro} \label{hc_subalg_intro}
$\Gamma$ is a Harish-Chandra subalgebra of $\uqqq$, and $\Gamma_1$ is a Harish-Chandra subalgebra of $\uson$.
\end{thmintro}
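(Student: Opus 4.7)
The plan is to transfer the problem through the embeddings of Theorem~\ref{embedding_intro} and reduce the Harish-Chandra condition to a finite-generation statement inside the skew group algebras. Since $\varphi$ is an injective algebra homomorphism with $\varphi(\Gamma) = \Lambda^G$, the $\Gamma$-bimodule $\Gamma u \Gamma$ is isomorphic to the $\Lambda^G$-bimodule $\Lambda^G \varphi(u) \Lambda^G$ inside $L\#\m$, and hence it suffices to prove that $\Lambda^G w \Lambda^G$ is finitely generated as a left and right $\Lambda^G$-module for every $w \in L\#\m$. The identical reduction via $\varphi_1$ handles the classical statement for $\Gamma_1 \subset \uson$ and $\Lambda_1^G \subset L_1\#\m$, so I would treat both cases uniformly, writing $\Lambda$ for either $\Lambda$ or $\Lambda_1$.

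I would then decompose $w = \sum_\mu f_\mu \mu$ as a finite sum with $f_\mu \in L$ and $\mu \in \m$. Since a finite sum of finitely generated modules is finitely generated, it is enough to treat a single summand $f\mu$. Using the skew group algebra relation $\mu h = (\mu \cdot h)\mu$, one obtains
\[
\Lambda^G (f\mu) \Lambda^G = \bigl(\Lambda^G f (\mu \cdot \Lambda^G)\bigr)\mu.
\]
The key observation is that the ($q$-)shift action of $\m$ on $L$ preserves $\Lambda$, so $\mu \cdot \Lambda^G \subseteq \Lambda$. Moreover, since $G$ is a finite group acting by algebra automorphisms on the finitely generated commutative domain $\Lambda$, the Hilbert--Noether theorem guarantees that $\Lambda^G$ is a Noetherian $\C$-algebra and that $\Lambda$ is a finitely generated $\Lambda^G$-module.

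To conclude left finite generation, I would clear denominators: writing $f = a/b$ with $a,b \in \Lambda$, the element $d := \prod_{g \in G} g(b)$ lies in $\Lambda^G$, and $df \in \Lambda$. Therefore $\Lambda^G f (\mu \cdot \Lambda^G) \subseteq \tfrac{1}{d}\Lambda$, and $\tfrac{1}{d}\Lambda$ is a finitely generated $\Lambda^G$-module, being isomorphic to $\Lambda$ via multiplication by $d$. By the Noetherian property of $\Lambda^G$, the $\Lambda^G$-submodule $\Lambda^G f (\mu \cdot \Lambda^G)$ is itself finitely generated, yielding left $\Lambda^G$-finiteness of $\Lambda^G (f\mu) \Lambda^G$. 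For the right $\Lambda^G$-module structure, the twist $(f\mu)h = f(\mu \cdot h)\mu$ shows that the right action factors through the isomorphic subring $\mu \cdot \Lambda^G \subset \Lambda$, so finite generation reduces to finite generation of $\Lambda^G f (\mu \cdot \Lambda^G)$ as a module over $\mu \cdot \Lambda^G$ under ordinary multiplication; the same denominator argument together with the fact that $\Lambda = \mu \cdot \Lambda$ is finitely generated over $\mu \cdot \Lambda^G$ closes this case.

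The main obstacle I anticipate is the careful verification that the actions of $\m$ and $G$ on $L$ indeed restrict to actions on $\Lambda$, with the correct compatibility to apply Hilbert--Noether uniformly in $q$: the groups $W_i^q$ with $q \neq 1$ contain the extra $(\Z/2\Z)^2$ factors, and one must check that their action on the Laurent polynomial ring is by algebra automorphisms preserving $\Lambda$, and that the limit $q \to 1$ gives the ordinary Weyl group action on the polynomial ring $\Lambda_1$. Once this bookkeeping is settled, the argument is formal and parallels the strategy used by Mazorchuk and Turowska in \cite[Proposition~1]{maz}.
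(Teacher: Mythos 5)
Your proof is correct and uses the same core ingredients as the paper: the injective embeddings $\varphi$, $\varphi_1$ of Theorem~\ref{embedding_intro}\eqref{b_pt_i}, the identifications $\varphi(\Gamma)=\Lambda^G$ and $\varphi_1(\Gamma_1)=\Lambda_1^G$ of Theorem~\ref{embedding_intro}\eqref{b_pt_ii}, the fact that $\Lambda$ is finitely generated over $\Lambda^G$, and Noetherianity of $\Lambda^G$ to pass from containment in a finitely generated module to finite generation. The organization of the middle step, however, is genuinely different. The paper first proves (Lemma~\ref{hc_subalg_smash_prod}) that $\Lambda$ is a Harish-Chandra subalgebra of $L\#\m$ by reducing, via \cite[Proposition~8]{dfo}, to checking $\Lambda\mu\Lambda=\Lambda\mu$ for the generators $\mu\in\m$, and then sandwiches $\Gamma'x\Gamma'\subseteq \Lambda x\Lambda\subseteq\sum_{i,j}\Gamma' f_i x_j$ before applying Noetherianity. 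You instead decompose $w=\sum_\mu f_\mu\mu$, compute $\Lambda^G(f\mu)\Lambda^G=\bigl(\Lambda^G f(\mu\cdot\Lambda^G)\bigr)\mu$, and clear denominators with the $G$-norm $d=\prod_{g\in G}g(b)\in\Lambda^G$ to land inside $\tfrac{1}{d}\Lambda$. Both routes produce the same kind of finitely generated $\Lambda^G$-module overmodule, but yours is self-contained (it does not invoke \cite[Proposition~8]{dfo}) at the cost of re-deriving the skew-product bookkeeping by hand, and it makes the common-denominator mechanism explicit, which is hidden inside the localization $\Lambda_S$ and the auxiliary Lemma in the paper. One small point you should make explicit for the right-module case: the submodule $\Lambda^G f(\mu\cdot\Lambda^G)$ still sits in $\tfrac{1}{d}\Lambda$ with $d\in\Lambda^G\subseteq\Lambda$, and $\tfrac{1}{d}\Lambda\cong\Lambda$ as a $\Lambda$-module (hence as a $\mu\cdot\Lambda^G$-module), so finite generation over $\mu\cdot\Lambda^G\cong\Lambda^G$ follows without needing a separate $\mu$-twisted denominator.
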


In Section \ref{prelim}, we provide necessary background information, more convenient versions of the formulas from \cite{gav}, and technical lemmas. We prove Theorem \ref{gen_gz_mods_intro} in Section \ref{gen_gz_mods}, Theorem \ref{embedding_intro} \eqref{b_pt_i} in Section \ref{emb}, and Theorem \ref{embedding_intro} \eqref{b_pt_ii} and Theorem \ref{hc_subalg_intro} in Section \ref{hc}. Section \ref{appendix} is an appendix containing a proof of a technical lemma from Section \ref{gen_gz_mods}.

\end{section}

\begin{section}{Preliminaries} \label{prelim}
\begin{subsection}{$\uqqq$ and Finite-Dimensional Simple Modules} \label{fd_simple_mods}
Throughout this paper we fix $h\in\C\setminus 2\pi i\Q$ where $i=\sqrt{-1}$ and put $q=e^{h}$. This ensures that $q$ is not a root of unity. Here we define $\uqqq$ for $n\geq 2$ using the same presentation as \cite[Section~2]{gav}. $U_q'(\mathfrak{so}_2)$ is just the polynomial algebra in one generator $\C[I_{21}]$.

For all $b\in\C$, we define $q^b=e^{hb}$ and
\begin{equation*}
    [b]:=\dfrac{q^{b}-q^{-b}}{q-q^{-1}}.
\end{equation*}
$\uqqq$ is defined as the complex associative algebra generated by elements $I_{i,i-1}$, $i=2,...,n$, which satisfy the following relations:
\begin{subequations}
\begin{equation}\label{rel1}
    [I_{i,i-1},I_{j,j-1}]=0 \ \ \ \ \ \text{if} \ |i-j|>1,
    \end{equation}
    \begin{equation}\label{rel2}
    I_{i+1,i}^2I_{i,i-1}-[2]I_{i+1,i}I_{i,i-1}I_{i+1,i}+I_{i,i-1}I_{i+1,i}^2=-I_{i,i-1},
    \end{equation}
    \begin{equation}\label{rel3}
    I_{i,i-1}^2I_{i+1,i}-[2]I_{i,i-1}I_{i+1,i}I_{i,i-1}+I_{i+1,i}I_{i,i-1}^2=-I_{i+1,i}.
    \end{equation}
\end{subequations}
    As $q\to 1$, this becomes the universal enveloping algebra of the complex semisimple Lie algebra $\mathfrak{so}_n$. This is because \eqref{rel2} and \eqref{rel3} become
\begin{equation*}
        [I_{i+1,i},[I_{i+1,i},I_{i,i-1}]]=-I_{i,i-1},
    \end{equation*}
    \begin{equation*}
        [I_{i,i-1},[I_{i,i-1},I_{i+1,i}]]=-I_{i+1,i},
    \end{equation*}respectively.
    
The following definition is needed for proving Lemma \ref{zar}, and it is also needed to define finite-dimensional modules of $\uqqq$.
\begin{definition}
For $n\geq 2$, consider a $k$-tuple $(m_{n1},m_{n2},...,m_{nk})\in\Z^k\cup(\frac{1}{2}+\Z)^k$ where $k=\operatorname{rank}\mathfrak{so}_n=\lfloor\frac{n}{2}\rfloor$ and
\begin{subequations}
    \begin{align}
        m_{n1}\geq m_{n2} \geq \dots \geq m_{nk}\geq 0, &  \ \text{for $n$ odd} \label{highest_wt_odd}\\
        m_{n1}\geq m_{n2} \geq \dots \geq |m_{nk}|, &  \ \text{for $n$ even}. \label{highest_wt_even}
    \end{align}
    \end{subequations}
    A \emph{Gelfand-Tsetlin pattern} (GT pattern) for $U_q'(\mathfrak{so}_n)$ has the following form: 
\[
\begin{matrix}
m_{n1} & \dots & \dots & \dots & m_{nk}\\
 & & \vdots & &\\
 & m_{51} & & m_{52} &\\
 & m_{41} & & m_{42} &\\
  & & m_{31} & & \\
 & & m_{21} & &
\end{matrix}
\]where the entries are all integers or all half-integers and satisfy the following interlacing conditions from \cite{gz}: 
\begin{subequations} \label{interlacing_conditions}
\begin{equation} \label{interlacing_odd}
    m_{2p+1,1}\geq m_{2p,1}\geq m_{2p+1,2}\geq m_{2p,2} \geq \dots \geq m_{2p+1,p}\geq m_{2p,p}\geq -m_{2p+1,p},
\end{equation}
\begin{equation} \label{interlacing_even}
    m_{2p,1}\geq m_{2p-1,1}\geq m_{2p,2}\geq m_{2p-1,2}\geq \dots \geq m_{2p-1,p-1}\geq |m_{2p,p}|.
\end{equation}
\end{subequations}
\end{definition}
The top row is the highest weight of a module, while the lower entries parameterize the basis vectors in the following finite-dimensional modules from \cite[Section~3]{gav}, on which our generic Gelfand-Tsetlin modules are based. We call the following modules \emph{classical} \cite{nonclassical} since they correspond to modules of $\uson$ when $q\to 1$. If $\alpha$ is a GT pattern which labels a basis element of our module, 
\begin{subequations} \label{i2p_fd_action}
\begin{equation} \label{i2p1}
    I_{2p+1,2p}.\ket{\alpha}=\sum_{j=1}^pA_{2p}^j(\alpha)\ket{\alpha^{+j}_{2p}} - \sum_{j=1}^pA_{2p}^j({\alpha}^{-j}_{2p})\ket{{\alpha}^{-j}_{2p}}
\end{equation}
\begin{equation} \label{i2p2}
    I_{2p,2p-1}.\ket{\alpha}=\sum_{j=1}^{p-1}B_{2p-1}^j(\alpha)\ket{\alpha^{+j}_{2p-1}} - \sum_{j=1}^{p-1}B_{2p-1}^j({\alpha}^{-j}_{2p-1})\ket{{\alpha}^{-j}_{2p-1}}+i\cdot C_{2p-1}(\alpha)\ket{\alpha}
\end{equation}
\end{subequations}
where $\alpha^{\pm j}_i$ is obtained from $\alpha$ by replacing $m_{ij}$ by $m_{ij}\pm 1$; $l_{2p,j}=m_{2p,j}+p-j$, $l_{2p+1,j}=m_{2p+1,j}+p-j+1$ and
\begin{subequations} \label{coeffsnoabs}
\begin{multline}\label{a2pj}
    A_{2p}^j(\alpha):=\bigg(\dfrac{[l_j'][l_j'+1]}{[2l_j'][2l_j'+2]}\dfrac{\prod_{r=1}^j[l_r+l_j'][l_r-l_j'-1]\prod_{r=j+1}^{p}[l_j'+l_r][l_j'-l_r+1]}{\prod_{r< j}[l_r'+l_j'][l_r'-l_j'][l_r'+l_j'+1][l_r'-l_j'-1]}\\
    \cdot \dfrac{\prod_{r=1}^{j-1}[l_r''+l_j'][l_r''-l_j'-1]\prod_{r=j}^{p-1}[l_j'+l_r''][l_j'-l_r''+1]}{\prod_{r>j}[l_j'+l_r'][l_j'-l_r'][l_j'+l_r'+1][l_j'-l_r'+1]}\bigg)^{1/2}
\end{multline}
\begin{multline}\label{b2pj}
    B_{2p-1}^j(\alpha):=\bigg(\dfrac{\prod_{r=1}^{j}[l_r'+l_j''][l_r'-l_j'']\prod_{r=j+1}^{p}[l_j''+l_r'][l_j''-l_r']}{\prod_{r< j}[l_r''+l_j''][l_r''-l_j''][l_r''+l_j''-1][l_r''-l_j''-1]}\\
    \cdot\dfrac{\prod_{r=1}^{j-1}[l_r'''+l_j''][l_r'''-l_j'']\prod_{r=j}^{p-1}[l_j''+l_r'''][l_j''-l_r''']}{\prod_{r>j}[l_j''+l_r''][l_j''-l_r''][l_j''+l_r''-1][l_j''-l_r''+1]}\dfrac{1}{[l_j'']^2[2l_j''+1][2l_j''-1]}\bigg)^{1/2}
\end{multline}
\begin{equation}
    C_{2p-1}(\alpha):=\frac{\prod_{r=1}^{p}[l_{r}']\prod_{r=1}^{p-1}[l_{r}''']}{\prod_{r=1}^{p-1}[l_{r}''][l_{r}''-1]};
\end{equation}
\end{subequations}
where $l_{2p+1,i}$ is denoted by $l_i$, $l_{2p,i}$ by $l_i'$, $l_{2p-1,i}$ by $l_i''$, and $l_{2p-2,i}$ by $l_i'''$. These are expanded versions of the matrix coefficients given in \cite[Section~3]{gav}, namely we rewrite the coefficients more explicitly without absolute values. It is convenient for the $q$-numbers to be positive in the proof of Proposition \ref{rat}, which is why we make this change. 
\end{subsection}

\begin{subsection}{$q$-Numbers}
We collect some elementary properties of $q$-numbers that will be useful. We use the fact that $[b]=0$ if and only if $q^{2b}=1$.
\begin{lemma}\label{lemq}
The following are true:
\begin{enumerate}[{\rm (i)}]
    \item\label{lemq1}If $b\in \Q\setminus\{0\}$, then $[b]\neq 0$.
    \item\label{lemq1.25}If $a,b\in\Q$ and $a\neq b$, then $q^a\neq q^b$.
    \item\label{lemq3}$\frac{[m]}{[2m]}$ exists if and only if $q^{2m}\neq -1$.
\end{enumerate}
\end{lemma}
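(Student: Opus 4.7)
The plan is to unpack the equivalence $[b] = 0 \Leftrightarrow q^{2b} = 1$ hinted at just before the lemma, and then translate each claim into a statement about whether $e^{hc} = 1$ for suitable $c$, invoking the standing assumption $h \in \C \setminus 2\pi i \Q$.

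For part~(i), I would observe that $q^{2b} = 1$ means $e^{2hb} = 1$, i.e.\ $2hb \in 2\pi i \Z$. If $b \in \Q \setminus \{0\}$ and $[b] = 0$, then $2hb = 2\pi i k$ for some $k \in \Z$, so $h = \pi i k / b$. Since $k/b \in \Q$ and $\pi i \Q = 2\pi i \Q$, this forces $h \in 2\pi i \Q$, contradicting the hypothesis on $h$. Hence $[b] \neq 0$.

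For part~(ii), the same strategy applies: $q^a = q^b$ is equivalent to $e^{h(a-b)} = 1$, hence $h(a-b) \in 2\pi i \Z$. When $a, b \in \Q$ are distinct, $a-b \in \Q \setminus \{0\}$, so a relation $h(a-b) = 2\pi i k$ yields $h = 2\pi i k/(a-b) \in 2\pi i \Q$, again contradicting the assumption on $h$.

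For part~(iii), the key step is the factorization $q^{2m} - q^{-2m} = (q^m - q^{-m})(q^m + q^{-m})$, which gives $[2m] = [m](q^m + q^{-m})$. Therefore $\frac{[m]}{[2m]} = \frac{1}{q^m + q^{-m}}$, which is well-defined exactly when $q^m + q^{-m} \neq 0$, equivalently $q^{2m} \neq -1$. I expect no serious obstacle; the only subtle point is in~(i), where one must note $\pi i \Q = 2\pi i \Q$ (multiplying $\Q$ by any nonzero rational leaves $\Q$ invariant) in order to land the desired contradiction.
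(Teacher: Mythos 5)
Your proof is correct and takes essentially the same approach as the paper. The paper only writes out part~(ii) in full (citing the author's earlier paper \cite{disch} for (i) and (iii)); there, it reduces $q^a = q^b$ to $q^{a-b} = 1$ and then raises to an integer power to contradict the fact that $q$ is not a root of unity, whereas you argue directly from $h \notin 2\pi i\Q$ — but these are trivially equivalent, since $q$ not being a root of unity is exactly the condition $h \notin 2\pi i\Q$. Your factorization argument for (iii) is the standard one (and is implicitly used again in the proof of Proposition~\ref{rat} via $\frac{[m]}{[2m]} = \frac{1}{q^m + q^{-m}}$).
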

\begin{proof}
Items \eqref{lemq1} and \eqref{lemq3} were proven in \cite[Lemma~2.1]{disch}. We prove item \eqref{lemq1.25} by contradiction. Suppose $q^a=q^b$ where $a,b\in\Q$ and $a\neq b$. $q^{a-b}=1$ for nonzero rational number $a-b=:\frac{c}{d}$ where $c,d\in\Z$. Then $q^c=1^d=1$ where $c\in \Z\setminus\{0\}$, which contradicts the fact that $q$ is not a root of unity.
\end{proof}
\end{subsection}

\begin{subsection}{Zariski-Density}
The idea of Zariski-density of GT patterns is well-known and has been employed for $U(\mathfrak{gl}_n)$ \cite{dfo}; $U_q(\mathfrak{gl}_n)$ \cite{fut_hartwig}, \cite{maz}; and $\uson$ \cite{ma}. For completeness we formulate and prove precise statements that will be crucial in the proofs of the main results of the paper.

Define $s:=r_2+\dots+r_n$, where $r_i=\operatorname{rank}\mathfrak{so}_i=\lfloor\frac{i}{2}\rfloor$. Thus $s=k^2+k$ if $n$ is odd and $s=k^2$ if $n$ is even. 
\begin{lemma} \label{zar}
Consider a rational function $f\in \C(x_1,...,x_s)$. 
\begin{enumerate}[{\rm (i)}]
    \item\label{zar4} If $f$ is zero at all $q^{m_{ij}}$ for any GT pattern for $U_q'(\mathfrak{so}_n)$, then $f=0$. 
    \item\label{zar4_classic} If $f$ is zero at all $m_{ij}$ for any GT pattern for $U_q'(\mathfrak{so}_n)$, then $f=0$. 
    \item\label{zar_generic} If $f$ is zero at all $q^{a_{j}}$ for $a_j=\frac{1}{p_j}$, $1\leq j\leq s$, for any strictly decreasing sequence of odd primes $(p_j)_{j=1}^{s}$, then $f=0$. 
    \item\label{zar_generic_classic} If $f$ is zero at all ${a_{j}}$ for $a_j=\frac{1}{p_j}$, $1\leq j\leq s$, for any strictly decreasing sequence of odd primes $(p_j)_{j=1}^{s}$, then $f=0$. 
\end{enumerate}
\end{lemma}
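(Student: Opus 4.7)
The plan is, for each part, to show that the prescribed set of evaluation points is Zariski-dense in $\C^s$ (or $(\C^*)^s$), whereupon the conclusion follows from the standard fact that a nonzero rational function has a proper Zariski-closed vanishing locus.

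I would first handle parts \eqref{zar4} and \eqref{zar4_classic}, which concern GT patterns. The obstacle to a direct product-density argument is the interlacing inequalities, which prevent the $m_{ij}$ from being independent parameters. To sidestep this I would pass to \emph{gap coordinates} $(g_k)_{k=1}^{s}$ recording successive differences along the interlacing chain, for example $m_{2p+1,j}-m_{2p,j}$, $m_{2p,j}-m_{2p+1,j+1}$, and, at the bottom of each triangle, a sum $m_{2p+1,p}+m_{2p,p}$ to absorb the $\pm$ asymmetry. This yields an invertible $\Z$-affine map $\phi$ whose image on the set of GT patterns is a Zariski-dense subset of $\N^s$. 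A polynomial $P$ vanishing on all GT patterns then corresponds, via $P\circ\phi^{-1}$, to a polynomial vanishing on this Zariski-dense subset of $\N^s$; by the standard one-variable fact (a polynomial in a single variable with infinitely many roots is zero) applied inductively coordinate by coordinate, $P\circ\phi^{-1}=0$, hence $P=0$. This settles \eqref{zar4_classic}, and the extension to rational $f=P/Q$ is immediate by applying the argument to $P$. For \eqref{zar4}, I substitute new variables $w_k:=q^{g_k}$; by $\Z$-linearity of $\phi^{-1}$ each $q^{m_{ij}}$ becomes a Laurent monomial in the $w_k$'s, so the hypothesis rewrites as a Laurent polynomial $\tilde P$ in the $w_k$'s vanishing on $\{q^n:n\in\N\}^s$. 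The set $\{q^n:n\in\N\}$ is infinite by Lemma \ref{lemq}\eqref{lemq1.25}, so $\tilde P=0$ by the same iterated single-variable argument, and reverting the monomial substitution gives $P=0$.

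For parts \eqref{zar_generic} and \eqref{zar_generic_classic}, the set of evaluation points is $(\C^*)^{r_n}\times q^A$ (respectively $\C^{r_n}\times A$), where $A:=\{(1/p_{r_n+1},\ldots,1/p_s):p_{r_n+1}>\cdots>p_s\text{ strictly decreasing odd primes}\}$ and $q^A$ denotes its coordinate-wise $q$-exponential image. The first factor is clearly Zariski-dense in $\C^{r_n}$. For the second, I would induct on the number of coordinates: with the tail $(p_{r_n+2},\ldots,p_s)$ fixed, the set of admissible $1/p_{r_n+1}$ (respectively $q^{1/p_{r_n+1}}$) for odd primes $p_{r_n+1}>p_{r_n+2}$ is infinite, pairwise distinctness in the quantum case following from Lemma \ref{lemq}\eqref{lemq1.25}. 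Any polynomial vanishing on $A$ (or $q^A$) must therefore vanish identically in its leading variable for generic tails, and the induction finishes the proof that $A$ and $q^A$ are Zariski-dense in $\C^{s-r_n}$. Since a product of Zariski-dense subsets is Zariski-dense in the product space, the conclusion follows.

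The step I expect to be most delicate is the explicit construction of the gap change of coordinates $\phi$ so that it is genuinely $\Z$-invertible and its image on GT patterns is a Zariski-dense subset of $\N^s$, especially in the even case $n=2k$ where the interlacing for the top row involves $|m_{2k,k}|$ rather than a simple chain, and where parity constraints on the differences need to be tracked (integer vs.\ half-integer patterns land in complementary cosets). One can split the set of patterns by the sign of $m_{2k,k}$ and by parity type, build $\phi$ separately on each piece, and verify density on each; this is essentially bookkeeping, but must be done carefully for the resulting transformation to remain an affine bijection and for the inductive single-variable argument to apply.
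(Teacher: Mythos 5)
Your proposal is correct. For parts \eqref{zar_generic} and \eqref{zar_generic_classic} your iterated one-variable argument is essentially the paper's, down to using Lemma \ref{lemq}\eqref{lemq1.25} and the infinitude of odd primes above a fixed bound; the only cosmetic difference is that the paper (``for convenience'') shrinks the evaluation set further, forcing $a_j=1/p_j$ for all $1\le j\le s$, rather than keeping the free factor in the first $r_n$ coordinates and multiplying in the density of a product at the end. For parts \eqref{zar4} and \eqref{zar4_classic} the paper gives no internal argument here, citing \cite[Lemma~2.4]{disch}, so there is nothing in this text to compare against; your gap-coordinate reduction to (a Zariski-dense coset of) $\N^s$ is a genuinely different and self-contained alternative, and the two caveats you flag are real but manageable: restrict to the Zariski-dense slice of patterns with $m_{2p,p}\ge 0$ (resolving the absolute value in \eqref{interlacing_even}) and to a fixed parity type (say all-integer entries), before writing down the affine change of variables. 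A route closer to the paper's own iteration in part \eqref{zar_generic} is also available and avoids the bookkeeping: iterate directly on the entries in the order $m_{n1},m_{n-1,1},\ldots,m_{21},m_{n2},m_{n-1,2},\ldots$ (column by column, top to bottom). At each step the entry being varied is bounded below by entries not yet eliminated but bounded above only by entries already eliminated (for instance, once $m_{n1}$ and $m_{n-1,1}$ are gone, the constraint $m_{n-1,1}\ge m_{n2}$ no longer restricts $m_{n2}$), so the leading variable is always unbounded above and the one-variable vanishing argument applies without any change of coordinates.
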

\begin{proof}
Item \eqref{zar4} is proved by the author in \cite[Lemma~2.4]{disch}. The proof for item \eqref{zar4_classic} is identical, except we replace $q^a$ with $a$ for any $a\in\frac{1}{2}\Z$. We prove item \eqref{zar_generic} here, though it is similar to the previous proofs.

Let $S:=\{(1/p_1,...,1/p_s)\in\C^s\mid (p_j)_{j=1}^s \; \text{is a strictly decreasing sequence of odd primes}\}$. Let $S_q:=\{(q^{1/p_1},...,q^{1/p_s})\in\C^s\mid (1/p_1,...,1/p_s)\in S\}$. We want to show that $S_q$ is Zariski-dense, i.e. if a complex rational function $f$ is zero when evaluated at $S_q$, then $f=0$. Now suppose $f(q^{a_1},q^{a_2},...,q^{a_s})=0$ for all $\textbf{a}\in S$. We may assume $f\in \C[x_1,...,x_s]$ since $f=0$ if and only if the numerator is $0$.

There is a nonnegative integer $N_0$ where for $0\leq k\leq N_0$ there exists $c_k^{(1)}(x_2,x_3,...,x_s)\in \C[x_2,x_3,...,x_s]$ such that 
\begin{equation*}
    f(x_1,...,x_s)=\sum_{k=0}^{N_0}c_k^{(1)}(x_2,...,x_s)x_1^k.
\end{equation*}Fix allowed $a_2,...,a_s$. Then if $\textbf{v}_1:=(q^{a_2},...,q^{a_s})$, we have 
\begin{equation*}
    g_{\textbf{v}_1}(x_1):=f(x_1,q^{a_2},...,q^{a_s})\in \C[x_1].
\end{equation*}Note that $\{q^{a_1}\mid a_1=\frac{1}{p_1}, \ p_1>p_2\}$ is an infinite set of zeros for the single-variable polynomial $g_{\textbf{v}_1}(x_1)$ since $p_2$ is a fixed prime number, so $g_{\textbf{v}_1}=0$ in $\C[x_1]$. Since $\textbf{v}_1$ was arbitrary, this implies $c_k^{(1)}(q^{a_2},...,q^{a_s})=0$ for all $0\leq k\leq N_0$ and for all $\textbf{a}\in S_n$. If we show that $c_k^{(1)}=0$ for all $0\leq k\leq N_0$, then we are done. We iterate this process to obtain the desired result.

The proof of item \eqref{zar_generic_classic} is identical to the proof of item \eqref{zar_generic}, except we replace $q^a$ with $a$ for any $a\in\Q$, and it is unnecessary to invoke Lemma \ref{lemq} \eqref{lemq1.25}.
\end{proof}
\end{subsection}
\end{section}

\begin{section}{Generic Gelfand-Tsetlin Modules} \label{gen_gz_mods}
In Section \ref{rational} we provide re-scalings of the classical finite-dimensional simple modules of $\uqqq$ which yield rational matrix coefficients in the variables $q^{m_{ij}}$, where $m_{ij}$ denotes an entry in a GT pattern. We use this to construct generic GT modules of $\uqqq$ and $\uson$ in Section \ref{exist_gen_gz_mods}, and show that the intersection of annihilators is trivial in both algebras in Section \ref{triv_intersection}.

\begin{subsection}{Rationalizing Matrix Coefficients} \label{rational}
Given a basis of the classical finite-dimensional simple $\uqqq$-modules, i.e. the modules defined by \eqref{i2p1} and \eqref{i2p2}, we wish to perform a re-scaling so that the coefficients are rational functions in the $q^{m_{ij}}$ variables, where the $m_{ij}$ are entries in the corresponding GT pattern. (Currently \eqref{a2pj} and \eqref{b2pj} are square roots of rational functions in the $q^{m_{ij}}$ variables.) This is necessary to prove Theorem \ref{existence_gen_n} since we will be able to use Lemma \ref{zar} \eqref{zar4}. We introduce the following convenient row-vector notation for describing a GT pattern $\alpha$:
\begin{equation*}
    \alpha=\begin{pmatrix}
    m_{n1} & \dots & m_{nk}\\
    & \vdots &\\
    & m_{31} &\\
    & m_{21} &
    \end{pmatrix}=:\begin{pmatrix}
    m_n\\
    \vdots\\
    m_3\\
    m_2
    \end{pmatrix}.
\end{equation*}Given a particular basis vector $\ket{\alpha}$, we define our new re-scaled vector by $\ket{\alpha}'=\mu_n(\alpha)\cdot\ket{\alpha}$, where $\mu_n(\alpha)$ is defined recursively by
\begin{equation} \label{lambda_def}
    \mu_n(\alpha):=\lambda_n\begin{pmatrix}
    m_n\\
    m_{n-1}
    \end{pmatrix}\cdot\mu_{n-1}\begin{pmatrix}
    m_{n-1}\\
    \vdots\\
    m_3\\
    m_2
    \end{pmatrix}=\lambda_n\begin{pmatrix}
    m_n\\
    m_{n-1}
    \end{pmatrix}\cdot \lambda_{n-1}\begin{pmatrix}
    m_{n-1}\\
    m_{n-2}
    \end{pmatrix}\dots\lambda_3\begin{pmatrix}
    m_3\\
    m_2
    \end{pmatrix}.
\end{equation}One can check that the rationalization provided in Proposition \ref{rat} coincides with the rationalization for when $n=3$ and $n=4$ given by the author in \cite[Lemmas~3.1,4.1]{disch}.

\begin{proposition} \label{rat}
Let $q=e^h$ where $h\in\R$. There exists a re-scaling of the basis of the classical finite-dimensional simple modules of $\uqqq$ which yield the following formulas:
\begin{subequations} \label{rat_formulas}
\begin{equation} 
    I_{2p+1,2p}.\ket{\alpha}=\sum_{j=1}^p a_{2p}^{j}(\alpha)\ket{\alpha_{2p}^{+j}}-\sum_{j=1}^{p}\hat{a}_{2p}^j(\alpha)\ket{{\alpha}_{2p}^{-j}}, \label{rat_formula_odd}
\end{equation}
\begin{equation}
    I_{2p,2p-1}.\ket{\alpha}=\sum_{j=1}^{p-1} b_{2p-1}^j(\alpha)\ket{\alpha_{2p-1}^{+j}}-\sum_{j=1}^{p-1}\hat{b}_{2p-1}^j(\alpha)\ket{{\alpha}_{2p-1}^{-j}}+i\cdot c_{2p-1}(\alpha)\ket{\alpha}, \label{rat_formula_even}
\end{equation}
\end{subequations}where, using the $l$-coordinate notation from \eqref{coeffsnoabs}, we have
\begin{subequations} \label{rat_coeff}
\begin{multline}\label{rat_coeff_odd_up}
    a_{2p}^j(\alpha)=\dfrac{[l_j'][l_j'+1]}{[2l_j'][2l_j'+2]}\dfrac{[l_j+l_j'][l_j-l_j'-1]\prod_{r=1}^{j-1}[l_r+l_j']\prod_{r=j+1}^{p}[l_j'+l_r][l_j'-l_r+1]}{\prod_{r< j}[l_r'+l_j'][l_r'+l_j'+1]}\\
    \cdot \dfrac{\prod_{r=1}^{j}[l_r''+l_j']\prod_{r=j+1}^{p-1}[l_j'+l_r''][l_j'-l_r''+1]}{\prod_{r>j}[l_j'+l_r'][l_j'-l_r'][l_j'+l_r'+1][l_j'-l_r'+1]},
\end{multline}
\begin{equation}\label{rat_coeff_odd_down}
    \hat{a}_{2p}^j(\alpha)=[l_j'-l_j'']\prod_{r=1}^{j-1}\frac{[l_r-l_j'][l_r''-l_j']}{[l_r'-l_j'+1][l_r'-l_j']},
\end{equation}
\begin{multline}\label{rat_coeff_even_up}
    b_{2p-1}^j(\alpha)=\dfrac{[l_j'+l_j''][l_j'-l_j'']\prod_{r=1}^{j-1}[l_r'+l_j'']\prod_{r=j+1}^{p}[l_j''+l_r'][l_j''-l_r']}{\prod_{r< j}[l_r''+l_j''][l_r''+l_j''-1]}\\
    \cdot\dfrac{\prod_{r=1}^{j}[l_r'''+l_j'']\prod_{r=j+1}^{p-1}[l_j''+l_r'''][l_j''-l_r''']}{\prod_{r>j}[l_j''+l_r''][l_j''-l_r''][l_j''+l_r''-1][l_j''-l_r''+1]}\dfrac{1}{[l_j'']^2[2l_j''+1][2l_j''-1]},
\end{multline}
\begin{equation}\label{rat_coeff_even_down}
    \hat{b}_{2p-1}^j(\alpha)=[l_j''-l_j'''-1]\prod_{r=1}^{j-1}\frac{[l_r'-l_j''+1][l_r'''-l_j''+1]}{[l_r''-l_j''+1][l_r''-l_j'']},
\end{equation}and
\begin{equation}\label{rat_coeff_even_const}
    c_{2p-1}(\alpha)=\frac{\prod_{r=1}^{p}[l_r']\prod_{r=1}^{p-1}[l_{r}''']}{\prod_{r=1}^{p-1}[l_r''][l_r''-1]}.
\end{equation}
\end{subequations}
\end{proposition}

We define our re-scaling recursively via \eqref{lambda_def} in the following way:\\
Let $e_i$ be the row-vector with $1$ for the $i$-th entry and where every other entry is zero. When $n=2p+1$, we define
\begin{equation*}
    \lambda_n\begin{pmatrix}
    m_n\\
    m_{n-1}+\sum_{i=1}^p(m_{ni}-m_{n-1,i})e_i
    \end{pmatrix}:=1
\end{equation*}and
\begin{multline} \label{rat_recursion_odd}
    \frac{\lambda_n\begin{pmatrix}
    m_n\\
    m_{n-1}+\sum_{i=1}^{j-1}(m_{ni}-m_{n-1,i})e_i
    \end{pmatrix}}{\lambda_n\begin{pmatrix}
    m_n\\
    m_{n-1}+\sum_{i=1}^{j-1}(m_{ni}-m_{n-1,i})e_i+e_j
    \end{pmatrix}}
    :=\bigg(\frac{[l_j'-p+j][l_j'+1]}{[2l_j'-2p+2j][2l_j'+2]}\cdot\\
    \cdot \frac{[l_j+l_j'][l_j-l_j'-1]\prod_{r=j+1}^p[l_j'+l_r][l_j'-l_r+1]}{\prod_{r>j}[l_j'+l_r'+1][l_j'-l_r'+1]}\bigg)^{1/2}.
\end{multline}

\noindent When $n=2p$, we define
\begin{equation*}
    \lambda_n\begin{pmatrix}
    m_n\\
    m_{n-1}+\sum_{i=1}^{p-1}(m_{ni}-m_{n-1,i})e_i
    \end{pmatrix}:=1
\end{equation*}and
\begin{multline} \label{rat_recursion_even}
    \frac{\lambda_n\begin{pmatrix}
    m_n\\
    m_{n-1}+\sum_{i=1}^{j-1}(m_{ni}-m_{n-1,i})e_i
    \end{pmatrix}}{\lambda_n\begin{pmatrix}
    m_n\\
    m_{n-1}+\sum_{i=1}^{j-1}(m_{ni}-m_{n-1,i})e_i+e_j
    \end{pmatrix}}
    :=\bigg(\frac{[l_j''-p+j]}{[2l_j''-2p+2j]}\cdot\\
    \cdot\frac{[l_j'+l_j''][l_j'-l_j'']\prod_{r=j+1}^{p}[l_j''+l_r'][l_j''-l_r']}{\prod_{r>j}[l_j''+l_r''][l_j''-l_r''+1]}\frac{1}{[l_j''][2l_j''+1]}\bigg)^{1/2}.
\end{multline}
Before proving Proposition \ref{rat}, we need the following technical lemma. The proof of this lemma is moved to the appendix (see Section \ref{appendix}).
\begin{lemma} \label{rat_telesc_prod}
\begin{enumerate}[(a)]
    \item \label{rat_telesc_prod_odd}The following is true for $n=2p+1$.
    \begin{enumerate}[{\rm (i)}]
        \item \label{rat_telesc_prod_odd_i}When $1\leq j \leq p$,
        \begin{multline*}
            \frac{\lambda_n\begin{pmatrix}
    m_n\\
    m_{n-1}
    \end{pmatrix}}{\lambda_n\begin{pmatrix}
    m_n\\
    m_{n-1}+e_j
    \end{pmatrix}}=\bigg(\prod_{s=1}^{j-1}\frac{[l_s+l_j'][l_s'-l_j']}{[l_s'+l_j'+1][l_s-l_j'-1]}\bigg)^{1/2}\\
    \cdot \bigg(\frac{[l_j'-p+j][l_j'+1]}{[2l_j'-2p+2j][2l_j'+2]}\frac{[l_j+l_j'][l_j-l_j'-1]\prod_{r=j+1}^p[l_j'+l_r][l_j'-l_r+1]}{\prod_{r>j}[l_j'+l_r'+1][l_j'-l_r'+1]}\bigg)^{1/2}.
        \end{multline*}
        \item \label{rat_telesc_prod_odd_ii}When $1\leq j<p$, 
        \begin{multline*}
            \frac{\lambda_{n-1}\begin{pmatrix}
            m_{n-1}\\
            m_{n-2}
            \end{pmatrix}}{\lambda_{n-1}\begin{pmatrix}
            m_{n-1}+e_j\\
            m_{n-2}
            \end{pmatrix}}=\bigg(\prod_{s=1}^{j-1}\frac{[l_s''+l_j'][l_s'-l_j'-1]}{[l_s'+l_j'][l_s''-l_j'-1]}\bigg)^{1/2}\bigg(\frac{[l_j'+l_j'']}{[2l_j'][l_j'-l_j''+1]}\bigg)^{1/2}\\
    \cdot \bigg(\frac{[2l_j'-2p+2j][l_j']}{[l_j'-p+j]}\frac{\prod_{r>j}[l_j'+l_r''][l_j'-l_r''+1]}{\prod_{r=j+1}^p[l_j'+l_r'][l_j'-l_r']}\bigg)^{1/2}.
        \end{multline*}
        \item \label{rat_telesc_prod_odd_iii}When $j=p$, 
        \begin{equation*}
            \frac{\lambda_{n-1}\begin{pmatrix}
            m_{n-1}\\
            m_{n-2}
            \end{pmatrix}}{\lambda_{n-1}\begin{pmatrix}
            m_{n-1}+e_j\\
            m_{n-2}
            \end{pmatrix}}=\bigg(\prod_{s=1}^{p-1}\frac{[l_s''+l_p'][l_s'-l_p'-1]}{[l_s'+l_p'][l_s''-l_p'-1]}\bigg)^{1/2}.
        \end{equation*}
    \end{enumerate}
    \item \label{rat_telesc_prod_even}The following is true for $n=2p$.
    \begin{enumerate}[{\rm (i)}]
        \item \label{rat_telesc_prod_even_i}When $1\leq j\leq p-1$,
        \begin{multline*}
            \frac{\lambda_n\begin{pmatrix}
    m_n\\
    m_{n-1}
    \end{pmatrix}}{\lambda_n\begin{pmatrix}
    m_n\\
    m_{n-1}+e_j
    \end{pmatrix}}=
    \bigg(\prod_{s=1}^{j-1}\frac{[l_s'+l_j''][l_s''-l_j'']}{[l_s''+l_j''][l_s'-l_j'']}\bigg)^{1/2}\\
    \cdot \bigg(\frac{[l_j''-p+j]}{[2l_j''-2p+2j]}\frac{[l_j'+l_j''][l_j'-l_j'']\prod_{r=j+1}^{p}[l_j''+l_r'][l_j''-l_r']}{\prod_{r>j}[l_j''+l_r''][l_j''-l_r''+1]}\frac{1}{[l_j''][2l_j''+1]}\bigg)^{1/2}.
        \end{multline*}
        \item \label{rat_telesc_prod_even_ii}When $1\leq j\leq p-1$, 
        \begin{multline*}
            \frac{\lambda_{n-1}\begin{pmatrix}
            m_{n-1}\\
            m_{n-2}
            \end{pmatrix}}{\lambda_{n-1}\begin{pmatrix}
            m_{n-1}+e_j\\
            m_{n-2}
            \end{pmatrix}}=\bigg(\prod_{s=1}^{j-1}\frac{[l_s'''+l_j''][l_s''-l_j''-1]}{[l_s''+l_j''-1][l_s'''-l_j'']}\bigg)^{1/2}\bigg(\frac{[l_j''+l_j''']}{[2l_j''-1][l_j''-l_j''']}\bigg)^{1/2}\\
    \cdot \bigg(\frac{[2l_j''-2p+2j]}{[l_j''-p+j][l_j'']}\frac{\prod_{r>j}[l_j''+l_r'''][l_j''-l_r''']}{\prod_{r=j+1}^{p-1}[l_j''+l_r''-1][l_j''-l_r'']}\bigg)^{1/2}.
        \end{multline*}
    \end{enumerate}
\end{enumerate}
\end{lemma}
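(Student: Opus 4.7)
The plan is to unwind the recursive definitions \eqref{rat_recursion_odd} and \eqref{rat_recursion_even} along carefully chosen paths in the lattice of bottom-row patterns, and show that the resulting products of square roots telescope to the stated closed form. Since the recursion only defines the ratio of $\lambda_n$ values when the first $j-1$ coordinates of the bottom row sit at their boundary values, I will not compute each $\lambda_n$ value in isolation, but instead construct a closed path between the two patterns whose ratio the lemma asks about, traversed entirely by single-coordinate moves to which the recursion directly applies.

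For part (a)(i), the natural path from $m_{n-1}$ to $m_{n-1}+e_j$ is the following: first, increment coordinates $s=1,\ldots,j-1$ one unit at a time (in order) until reaching the intermediate pattern $P_1 := m_{n-1} + \sum_{i=1}^{j-1}(m_{n,i}-m_{n-1,i})e_i$ where the first $j-1$ entries sit at the boundary; then apply one recursion step at position $j$ to reach $P_1+e_j$; then decrement coordinates $s=j-1,\ldots,1$ back to their original values, arriving at $m_{n-1}+e_j$. At every step the recursion applies. The two ``legs'' on coordinates $1,\ldots,j-1$ traverse the same sequences of $l_s'$-values in opposite directions, but differ in the value of $l_j'$ by one (since the forward leg runs before we bump $j$ and the backward leg runs after), so each pair of opposing factors cancel up to a telescoping remainder in $l_j'$. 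After cancellation, only the endpoints of each $s$-telescope survive, producing exactly the product $\prod_{s=1}^{j-1}$ in the claim, while the single recursion step at position $j$ contributes the remaining factor identical to the right-hand side of \eqref{rat_recursion_odd}.

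Parts (a)(ii), (iii) and (b)(i), (ii) follow the same blueprint, except that now two $\lambda_{n-1}$ values whose \emph{top} rows differ by $e_j$ are being compared. Here I would exploit that $\lambda_{n-1}=1$ at its top-row-dependent boundary: walking from the boundary of $(m_{n-1},\,\cdot\,)$ down to $m_{n-2}$, versus from the boundary of $(m_{n-1}+e_j,\,\cdot\,)$ down to $m_{n-2}$, produces two telescoping products that can be matched step-by-step. The shift of the top row by $e_j$ shifts both the starting boundary and the $l_r$-values appearing in the recursion by a single unit in the $j$-th slot, so in the ratio nearly everything telescopes cleanly, leaving the claimed collection of residual factors. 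Part (a)(iii), corresponding to $j=p$, is a degenerate case in which the single-step factor collapses.

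The main obstacle is combinatorial bookkeeping. The $q$-numbers in \eqref{rat_recursion_odd} and \eqref{rat_recursion_even} depend on $l_j$, $l_r$, $l_r'$, and $l_r''$, and after a one-unit shift at position $j$ or $s$ these indices move predictably; the challenge is to pair the factors from the forward and backward legs carefully and to identify precisely what cancels and what survives. Because the entire computation lives inside a square root, some attention to signs and to invoking Lemma \ref{lemq} to guarantee nonvanishing denominators is also needed. An equivalent route would be an induction on $j$ within each part, with a single recursion step providing the inductive step.
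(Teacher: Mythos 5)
Your proposal is correct in approach and matches the paper's proof: both start from the telescoping expansion \eqref{rescale_lambda_n} of $\lambda_n$ (respectively $\lambda_{n-1}$) along the canonical coordinate-by-coordinate path to the boundary, take the ratio, cancel the segments that are unaffected by the shift, and telescope the residual products in $k$ for $s<j$, with the single surviving recursion step supplying the closed-form factor. Your ``forward $\to$ single $j$-step $\to$ backward'' path for part (a)(i) is exactly the paper's ratio of two canonical-path products written as one path; the cancellation for $s>j$ you get for free by never traversing those coordinates. Two small corrections to keep the plan honest: part (b)(i) is a $\lambda_n$ bottom-row shift like (a)(i), not a top-row $\lambda_{n-1}$ comparison as in (a)(ii),(iii),(b)(ii); and in the top-row-shift cases the recursion invoked is the one of opposite parity (e.g.\ for $n=2p+1$, the $\lambda_{n-1}$ ratios use \eqref{rat_recursion_even}), which changes which factors appear, so the blueprint needs that parity switch made explicit. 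The suggested alternative ``induction on $j$'' is not used by the paper and would not reduce to a single recursion step per increment of $j$, so I would drop that remark.
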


\begin{proof}[Proof of Proposition~\ref{rat}]
First, we ensure that this is a proper re-scaling, i.e. $\lambda_n\begin{pmatrix}
m_n\\
m_{n-1}
\end{pmatrix}$ is nonzero and defined for any GT pattern. By \eqref{rescale_lambda_n}, it is enough to show that \eqref{rat_recursion_odd} and \eqref{rat_recursion_even} are nonzero and defined for any GT pattern. Since $\frac{[m]}{[2m]}=\frac{1}{q^m+q^{-m}}$, $\frac{[l_j'-p+j][l_j'+1]}{[2l_j'-2p+2j][2l_j'+2]}$ from \eqref{rat_recursion_odd} and $\frac{[l_j'-p+j]}{[2l_j'-2p+2j]}$ from \eqref{rat_recursion_even} are always nonzero. By Lemma \ref{lemq} \eqref{lemq3}, these are also defined since we are choosing $q$ to be a positive real number. For the other $q$-numbers $[a]$ in these expressions, it follows from Lemma \ref{lemq} \eqref{lemq1} that $a\neq 0$ implies that $[a]\neq 0$. We argue then that each $a$ is nonzero. 

It follows from the definitions of $l_j$ and $l_j'$ that in \eqref{rat_recursion_odd}, $l_j-l_j'-1=0$ if and only if $m_{n,j}-m_{n-1,j}=0$. But if this is true, then on the left-hand side of \eqref{rat_recursion_odd} in the denominator, the entries in the pattern described there fail to satisfy \eqref{interlacing_odd}. Thus the use of \eqref{rat_recursion_odd} is invalid in this instance. We obtain a similar result for $l_j'-l_j''$ in \eqref{rat_recursion_even} since the entries described in the pattern in the denominator of the left-hand side of \eqref{rat_recursion_even} fail to satisfy \eqref{interlacing_even}. Similarly, from the definitions of $l_j$, $l_j'$, $l_j''$, and $l_j'''$, and from \eqref{interlacing_conditions}, it is simple to show that every other $a$ is nonzero. Therefore every $q$-number from these equations is nonzero, and we have also shown that the numbers are nonzero when $q\to 1$.

We chose $q=e^h$ where $h\in\R$ since this ensures that $q$-numbers $[a]\in \R_{\geq 0}$ when $a\in \R_{\geq 0}$, hence the square root behaves multiplicatively in this proof. Now we check the case when $n=2p+1$. Recall that $\ket{\alpha}':=\mu_n(\alpha)\ket{\alpha}$ is a re-scaled basis vector and $\alpha_k^{\pm j}$ is the GT pattern $\alpha$ but where $m_{kj}$ is replaced by $m_{kj}\pm 1$. So
\begin{equation*}
    I_{n,n-1}.\ket{\alpha}'=\sum_{j=1}^p\frac{\mu_n(\alpha)}{\mu(\alpha_{n-1}^{+j})}A_{2p}^j(\alpha)\ket{\alpha_{n-1}^{+j}}'-\sum_{j=1}^p\frac{\mu_n(\alpha)}{\mu({\alpha}_{n-1}^{-j})}A_{2p}^j({\alpha}_{n-1}^{-j})\ket{{\alpha}_{n-1}^{-j}}'
\end{equation*}
\begin{multline*}
    =\sum_{j=1}^p\frac{\lambda_n\begin{pmatrix}
    m_n\\
    m_{n-1}
    \end{pmatrix}\lambda_{n-1}\begin{pmatrix}
    m_{n-1}\\
    m_{n-2}
    \end{pmatrix}}{\lambda_n\begin{pmatrix}
    m_n\\
    m_{n-1}+e_j
    \end{pmatrix}\lambda_{n-1}\begin{pmatrix}
    m_{n-1}+e_j\\
    m_{n-2}
    \end{pmatrix}}A_{2p}^j(\alpha)\ket{\alpha_{n-1}^{+j}}'\\
    -\sum_{j=1}^p\frac{\lambda_n\begin{pmatrix}
    m_n\\
    m_{n-1}
    \end{pmatrix}\lambda_{n-1}\begin{pmatrix}
    m_{n-1}\\
    m_{n-2}
    \end{pmatrix}}{\lambda_n\begin{pmatrix}
    m_n\\
    m_{n-1}-e_j
    \end{pmatrix}\lambda_{n-1}\begin{pmatrix}
    m_{n-1}-e_j\\
    m_{n-2}
    \end{pmatrix}}A_{2p}^j({\alpha}_{n-1}^{-j})\ket{{\alpha}_{n-1}^{-j}}'.
\end{multline*}

\noindent By Lemma \ref{rat_telesc_prod} \eqref{rat_telesc_prod_odd} the coefficient for $\ket{\alpha_{n-1}^{+j}}'$ when $j<p$ is 
\begin{multline*}
    \frac{\lambda_n\begin{pmatrix}
    m_n\\
    m_{n-1}
    \end{pmatrix}}{\lambda_n\begin{pmatrix}
    m_n\\
    m_{n-1}+e_j
    \end{pmatrix}}\frac{\lambda_{n-1}\begin{pmatrix}
    m_{n-1}\\
    m_{n-2}
    \end{pmatrix}}{\lambda_{n-1}\begin{pmatrix}
    m_{n-1}+e_j\\
    m_{n-2}
    \end{pmatrix}}A_{2p}^j(\alpha)
    =\bigg(\prod_{s=1}^{j-1}\frac{[l_s+l_j'][l_s'-l_j']}{[l_s'+l_j'+1][l_s-l_j'-1]}\bigg)^{1/2}\\
    \cdot \bigg(\frac{[l_j'-p+j][l_j'+1]}{[2l_j'-2p+2j][2l_j'+2]}\frac{[l_j+l_j'][l_j-l_j'-1]\prod_{r=j+1}^p[l_j'+l_r][l_j'-l_r+1]}{\prod_{r>j}[l_j'+l_r'+1][l_j'-l_r'+1]}\bigg)^{1/2}\\
    \cdot \bigg(\prod_{s=1}^{j-1}\frac{[l_s''+l_j'][l_s'-l_j'-1]}{[l_s'+l_j'][l_s''-l_j'-1]}\bigg)^{1/2}\bigg(\frac{[l_j'+l_j'']}{[2l_j'][l_j'-l_j''+1]}\bigg)^{1/2}\\
    \cdot \bigg(\frac{[2l_j'-2p+2j][l_j']}{[l_j'-p+j]}\frac{\prod_{r>j}[l_j'+l_r''][l_j'-l_r''+1]}{\prod_{r=j+1}^p[l_j'+l_r'][l_j'-l_r']}\bigg)^{1/2}\\
    \cdot\bigg(\dfrac{[l_j'][l_j'+1]}{[2l_j'][2l_j'+2]}\dfrac{\prod_{r=1}^j[l_r+l_j'][l_r-l_j'-1]\prod_{r=j+1}^{p}[l_j'+l_r][l_j'-l_r+1]}{\prod_{r< j}[l_r'+l_j'][l_r'-l_j'][l_r'+l_j'+1][l_r'-l_j'-1]}\\
    \cdot \dfrac{\prod_{r=1}^{j-1}[l_r''+l_j'][l_r''-l_j'-1]\prod_{r=j}^{p-1}[l_j'+l_r''][l_j'-l_r''+1]}{\prod_{r>j}[l_j'+l_r'][l_j'-l_r'][l_j'+l_r'+1][l_j'-l_r'+1]}\bigg)^{1/2}
\end{multline*}
\begin{equation*}
    =\bigg(\prod_{r=1}^{j-1}\frac{[l_r'-l_j'][l_r'-l_j'-1]}{[l_r-l_j'-1][l_r''-l_j'-1]}\bigg)\frac{A_{2p}^j(\alpha)^2}{[l_j'-l_j''+1]},
\end{equation*}

\noindent and when $j=p$, the coefficient for $\ket{\alpha_{n-1}^{+j}}'$ is
\begin{multline*}
    \frac{\lambda_n\begin{pmatrix}
    m_n\\
    m_{n-1}
    \end{pmatrix}}{\lambda_n\begin{pmatrix}
    m_n\\
    m_{n-1}+e_p
    \end{pmatrix}}\frac{\lambda_{n-1}\begin{pmatrix}
    m_{n-1}\\
    m_{n-2}
    \end{pmatrix}}{\lambda_{n-1}\begin{pmatrix}
    m_{n-1}+e_p\\
    m_{n-2}
    \end{pmatrix}}A_{2p}^p(\alpha)
    =\bigg(\prod_{s=1}^{p-1}\frac{[l_s+l_p'][l_s'-l_p']}{[l_s'+l_p'+1][l_s-l_p'-1]}\bigg)^{1/2}\\
    \cdot \bigg(\frac{[l_p'-p+p][l_p'+1]}{[2l_p'-2p+2p][2l_p'+2]}[l_p+l_p'][l_p-l_p'-1]\bigg)^{1/2}
    \cdot \bigg(\prod_{s=1}^{p-1}\frac{[l_s''+l_p'][l_s'-l_p'-1]}{[l_s'+l_p'][l_s''-l_p'-1]}\bigg)^{1/2}\\
    \bigg(\dfrac{[l_p'][l_p'+1]}{[2l_p'][2l_p'+2]}\dfrac{\prod_{r=1}^p[l_r+l_p'][l_r-l_p'-1]\prod_{r=1}^{p-1}[l_r''+l_p'][l_r''-l_p'-1]}{\prod_{r< p}[l_r'+l_p'][l_r'-l_p'][l_r'+l_p'+1][l_r'-l_p'-1]}\bigg)^{1/2}
\end{multline*}
\begin{equation*}
    =\bigg(\prod_{r=1}^{p-1}\frac{[l_r'-l_p'][l_r'-l_p'-1]}{[l_r-l_p'-1][l_r''-l_p'-1]}\bigg)A_{2p}^p(\alpha)^2.
\end{equation*}

\noindent Note that
\begin{equation*}
    \frac{\lambda_n\begin{pmatrix}
    m_n\\
    m_{n-1}
    \end{pmatrix}\lambda_{n-1}\begin{pmatrix}
    m_{n-1}\\
    m_{n-2}
    \end{pmatrix}}{\lambda_n\begin{pmatrix}
    m_n\\
    m_{n-1}-e_j
    \end{pmatrix}\lambda_{n-1}\begin{pmatrix}
    m_{n-1}-e_j\\
    m_{n-2}
    \end{pmatrix}}=\Bigg(\frac{\lambda_n\begin{pmatrix}
    m_n\\
    m_{n-1}-e_j
    \end{pmatrix}}{\lambda_n\begin{pmatrix}
    m_n\\
    m_{n-1}
    \end{pmatrix}}\Bigg)\inv \Bigg(\frac{\lambda_{n-1}\begin{pmatrix}
    m_{n-1}-e_j\\
    m_{n-2}
    \end{pmatrix}}{\lambda_{n-1}\begin{pmatrix}
    m_{n-1}\\
    m_{n-2}
    \end{pmatrix}}\Bigg)\inv, 
\end{equation*}so the coefficient for $\ket{{\alpha}_{n-1}^{-j}}'$ is the reciprocal of what gets multiplied onto $A_{2p}^j(\alpha)^2$ for the $\ket{\alpha_{n-1}^{+j}}'$ coefficient, except $l_j'$ is replaced with $l_j'-1$. Therefore
\begin{multline*}
    I_{n,n-1}.\ket{\alpha}'=\sum_{j=1}^{p}\bigg(\prod_{r=1}^{j-1}\frac{[l_r'-l_j'][l_r'-l_j'-1]}{[l_r-l_j'-1][l_r''-l_j'-1]}\bigg)\frac{A_{2p}^j(\alpha)^2}{[l_j'-l_j''+1]}\ket{\alpha_{n-1}^{+j}}'\\
    -\sum_{j=1}^{p}[l_j'-l_j'']\bigg(\prod_{r=1}^{j-1}\frac{[l_r-l_j'][l_r''-l_j']}{[l_r'-l_j'+1][l_r'-l_j']}\bigg)\ket{{\alpha}_{n-1}^{-j}}'.
\end{multline*}(Note that $l_j''$ does not exist when $j=p$, hence those $q$-numbers are removed in the $j=p$ term.)

Now we check the case when $n=2p$.
\begin{equation*}
    I_{n,n-1}.\ket{\alpha}'=\sum_{j=1}^{p-1}\frac{\mu_n(\alpha)}{\mu(\alpha_{n-1}^{+j})}B_{2p-1}^j(\alpha)\ket{\alpha_{n-1}^{+j}}'-\sum_{j=1}^{p-1}\frac{\mu_n(\alpha)}{\mu({\alpha}_{n-1}^{-j})}B_{2p-1}^j({\alpha}_{n-1}^{-j})\ket{{\alpha}_{n-1}^{-j}}'+i\cdot C_{2p-1}(\alpha)\ket{\alpha}'
\end{equation*}
\begin{equation*}
    =\sum_{j=1}^{p-1}\frac{\lambda_n\begin{pmatrix}
    m_n\\
    m_{n-1}
    \end{pmatrix}\lambda_{n-1}\begin{pmatrix}
    m_{n-1}\\
    m_{n-2}
    \end{pmatrix}}{\lambda_n\begin{pmatrix}
    m_n\\
    m_{n-1}+e_j
    \end{pmatrix}\lambda_{n-1}\begin{pmatrix}
    m_{n-1}+e_j\\
    m_{n-2}
    \end{pmatrix}}B_{2p-1}^j(\alpha)\ket{\alpha_{n-1}^{+j}}'
\end{equation*}
\begin{equation*}
    -\sum_{j=1}^{p-1}\frac{\lambda_n\begin{pmatrix}
    m_n\\
    m_{n-1}
    \end{pmatrix}\lambda_{n-1}\begin{pmatrix}
    m_{n-1}\\
    m_{n-2}
    \end{pmatrix}}{\lambda_n\begin{pmatrix}
    m_n\\
    m_{n-1}-e_j
    \end{pmatrix}\lambda_{n-1}\begin{pmatrix}
    m_{n-1}-e_j\\
    m_{n-2}
    \end{pmatrix}}B_{2p-1}^j({\alpha}_{n-1}^{-j})\ket{{\alpha}_{n-1}^{-j}}' +i \cdot C_{2p-1}(\alpha)\ket{\alpha}'.
\end{equation*}

\noindent By Lemma \ref{rat_telesc_prod} \eqref{rat_telesc_prod_even}, the coefficient for $\ket{\alpha_{n-1}^{+j}}'$ is
\begin{multline*}
\frac{\lambda_n\begin{pmatrix}
m_n\\
m_{n-1}
\end{pmatrix}}{\lambda_n\begin{pmatrix}
m_n\\
m_{n-1}+e_j
\end{pmatrix}}\frac{\lambda_{n-1}\begin{pmatrix}
m_{n-1}\\
m_{n-2}
\end{pmatrix}}{\lambda_{n-1}\begin{pmatrix}
m_{n-1}+e_j\\
m_{n-2}
\end{pmatrix}}B_{2p-1}^j(\alpha)=\bigg(\prod_{s=1}^{j-1}\frac{[l_s'+l_j''][l_s''-l_j'']}{[l_s''+l_j''][l_s'-l_j'']}\bigg)^{1/2}\\
    \cdot \bigg(\frac{[l_j''-p+j]}{[2l_j''-2p+2j]}\frac{[l_j'+l_j''][l_j'-l_j'']\prod_{r=j+1}^{p}[l_j''+l_r'][l_j''-l_r']}{\prod_{r>j}[l_j''+l_r''][l_j''-l_r''+1]}\frac{1}{[l_j''][2l_j''+1]}\bigg)^{1/2}\\
    \cdot\bigg(\prod_{s=1}^{j-1}\frac{[l_s'''+l_j''][l_s''-l_j''-1]}{[l_s''+l_j''-1][l_s'''-l_j'']}\bigg)^{1/2}\bigg(\frac{[l_j''+l_j''']}{[2l_j''-1][l_j''-l_j''']}\bigg)^{1/2}\\
    \cdot \bigg(\frac{[2l_j''-2p+2j]}{[l_j''-p+j][l_j'']}\frac{\prod_{r>j}[l_j''+l_r'''][l_j''-l_r''']}{\prod_{r=j+1}^{p-1}[l_j''+l_r''-1][l_j''-l_r'']}\bigg)^{1/2}\\
    \cdot\bigg(\dfrac{\prod_{r=1}^{j}[l_r'+l_j''][l_r'-l_j'']\prod_{r=j+1}^{p}[l_j''+l_r'][l_j''-l_r']}{\prod_{r< j}[l_r''+l_j''][l_r''-l_j''][l_r''+l_j''-1][l_r''-l_j''-1]}\\
    \cdot\dfrac{\prod_{r=1}^{j-1}[l_r'''+l_j''][l_r'''-l_j'']\prod_{r=j}^{p-1}[l_j''+l_r'''][l_j''-l_r''']}{\prod_{r>j}[l_j''+l_r''][l_j''-l_r''][l_j''+l_r''-1][l_j''-l_r''+1]}\dfrac{1}{[l_j'']^2[2l_j''+1][2l_j''-1]}\bigg)^{1/2}
\end{multline*}
\begin{equation*}
    =\bigg(\prod_{r=1}^{j-1}\frac{[l_r''-l_j''][l_r''-l_j''-1]}{[l_r'-l_j''][l_r'''-l_j'']}\bigg)\frac{B_{2p-1}^j(\alpha)^2}{[l_j''-l_j''']}.
\end{equation*}

\noindent Note that
\begin{equation*}
    \frac{\lambda_n\begin{pmatrix}
    m_n\\
    m_{n-1}
    \end{pmatrix}\lambda_{n-1}\begin{pmatrix}
    m_{n-1}\\
    m_{n-2}
    \end{pmatrix}}{\lambda_n\begin{pmatrix}
    m_n\\
    m_{n-1}-e_j
    \end{pmatrix}\lambda_{n-1}\begin{pmatrix}
    m_{n-1}-e_j\\
    m_{n-2}
    \end{pmatrix}}=\Bigg(\frac{\lambda_n\begin{pmatrix}
    m_n\\
    m_{n-1}-e_j
    \end{pmatrix}}{\lambda_n\begin{pmatrix}
    m_n\\
    m_{n-1}
    \end{pmatrix}}\Bigg)\inv \Bigg(\frac{\lambda_{n-1}\begin{pmatrix}
    m_{n-1}-e_j\\
    m_{n-2}
    \end{pmatrix}}{\lambda_{n-1}\begin{pmatrix}
    m_{n-1}\\
    m_{n-2}
    \end{pmatrix}}\Bigg)\inv, 
\end{equation*}so the coefficient for $\ket{{\alpha}_{n-1}^{-j}}'$ is the reciprocal of what gets multiplied onto $B_{2p-1}^j(\alpha)^2$ for the $\ket{\alpha_{n-1}^{+j}}'$ coefficient, except $l_j''$ is replaced with $l_j''-1$. Therefore
\begin{multline*} 
    I_{n,n-1}.\ket{\alpha}'=\sum_{j=1}^{p-1}\bigg(\prod_{r=1}^{j-1}\frac{[l_r''-l_j''][l_r''-l_j''-1]}{[l_r'-l_j''][l_r'''-l_j'']}\bigg)\frac{B_{2p-1}^j(\alpha)^2}{[l_j''-l_j''']}\ket{\alpha_{n-1}^{+j}}'\\
    -\sum_{j=1}^{p-1}[l_j''-l_j'''-1]\bigg(\prod_{r=1}^{j-1}\frac{[l_r'-l_j''+1][l_r'''-l_j''+1]}{[l_r''-l_j''+1][l_r''-l_j'']}\bigg)\ket{{\alpha}_{n-1}^{-j}}'+i \frac{\prod_{r=1}^{p}[l_r']\prod_{r=1}^{p-1}[l_{r}''']}{\prod_{r=1}^{p-1}[l_r''][l_r''-1]}\ket{\alpha}'.
\end{multline*}
\end{proof}

The following corollary is an extension of Proposition \ref{rat} and it also allows us to construct generic GT modules of $\uson$ in Corollary \ref{existence_gen_n_qto1}.
\begin{corollary}\label{rat_gen_q}
The rationalized matrix coefficients from Proposition \ref{rat} hold for any $q\neq 0$ where $q$ is not a root of unity, and also when $q\to 1$.
\end{corollary}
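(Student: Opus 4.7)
The plan is to interpret each defining relation \eqref{rel1}--\eqref{rel3} of $\uqqq$ as a rational identity in the variables $q$ and $\{q^{m_{ij}}\}$, and to promote its validity from the regime $q = e^h$, $h \in \R$ of Proposition \ref{rat} to all non-root-of-unity $q \in \C^*$ via Zariski density (Lemma \ref{zar}) combined with a one-variable analytic-continuation argument in $q$. The classical limit $q \to 1$ then follows by continuity, thanks to the fact, already verified in the proof of Proposition \ref{rat}, that no denominator $q$-number in \eqref{rat_coeff_odd_up}--\eqref{rat_coeff_even_const} has a vanishing argument on a GT pattern.

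Concretely, I would apply each relation \eqref{rel1}--\eqref{rel3} to an arbitrary basis vector $\ket{\alpha}'$ using the rational formulas \eqref{rat_formula_odd}--\eqref{rat_formula_even} and compare the coefficient of each resulting vector $\ket{\beta}'$. Because the coefficients in \eqref{rat_coeff_odd_up}--\eqref{rat_coeff_even_const} are rational functions of $q$ and the $q^{m_{ij}}$, this produces, for each pair $(\alpha,\beta)$, an identity
\begin{equation*}
    R_{\alpha,\beta}(q,\, q^{m_{ij}}) = 0, \qquad R_{\alpha,\beta} \in \C\bigl(q,\, q^{m_{ij}}\bigr),
\end{equation*}
that Proposition \ref{rat} already guarantees whenever $q = e^h$, $h \in \R$, and the $m_{ij}$ are entries of a genuine GT pattern. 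Fixing such a positive-real $q$, Lemma \ref{zar} \eqref{zar4} forces the numerator of $R_{\alpha,\beta}$ to vanish identically in the $q^{m_{ij}}$ variables. Fixing the $m_{ij}$ thereafter at arbitrary complex values and viewing the result as a rational function in the single variable $q$, vanishing on the infinite set $\{e^h : h \in \R\}$ forces identical vanishing. Hence each relation is satisfied whenever every $q$-number in \eqref{rat_coeff_odd_up}--\eqref{rat_coeff_even_const} is defined, which is precisely the non-root-of-unity locus.

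For the limit $q \to 1$, each $q$-number $[b]$ tends to $b$, and the denominator analysis from Proposition \ref{rat} shows that the argument of every denominator $q$-number is a nonzero rational number on any GT pattern, so no pole develops at $q = 1$. The rational coefficients therefore have well-defined limits, producing matrix coefficients that are rational in the (half-)integers $m_{ij}$ themselves, and continuity transfers the already-proven identities to their $q \to 1$ limits, which are the defining relations of $\uson$ displayed at the end of Section \ref{fd_simple_mods}. The only real technical obstacle is careful bookkeeping of poles at complex $q$ and at $q = 1$; this is a direct extension of the denominator analysis already carried out in Proposition \ref{rat}.
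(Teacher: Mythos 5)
Your argument is essentially the paper's: Proposition \ref{rat} gives the relations for $q=e^h$ with $h\in\R$, the Zariski-density Lemma \ref{zar} \eqref{zar4} upgrades the pointwise vanishing at GT patterns to identical vanishing of the coefficient rational functions, and $q\to 1$ is the $h=0$ specialization, pole-free by the denominator analysis already carried out in Proposition \ref{rat}. One small imprecision in your continuation-in-$q$ step: you should fix the formal variables $q^{m_{ij}}$ themselves at arbitrary nonzero complex values rather than the exponents $m_{ij}$ (since $q^{m_{ij}}$ is not a rational function of $q$ for non-integer $m_{ij}$), and then observe that the numerator's coefficients, viewed as Laurent polynomials in $q$, vanish on the uncountable set $\{e^h : h\in\R\}$ and hence identically.
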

\begin{proof}
The $q\to 1$ case is included in Proposition \ref{rat} by choosing $h=0$, i.e. writing half-integers in the matrix coefficients instead of $q$-numbers of half-integers. Therefore we have rationalized matrix coefficients for finite-dimensional simple modules of $\uson$. Now for when $q$ is not a root of unity, let $u$ be a relation of $\uqqq$, i.e. a nonzero element of the kernel of the natural projection $F$ from the free algebra $\C\langle I_{21},I_{32},...,I_{n,n-1}\rangle$ to $\uqqq$. Then for a basis vector $\ket{\alpha}$ in the finite-dimensional simple module $V_{m_n}$, we want $F(u).\ket{\alpha}=0$. Note that the coefficients for the vector $F(u).\ket{\alpha}$ are rational functions in $q^{m_{ij}}$ for all entries $m_{ij}$ in the GT pattern $\alpha$. By Proposition \ref{rat}, these rational functions are $0$ when evaluated at $q^{m_{ij}}$ where $q=e^h$ for $h\in\R\setminus\{0\}$. Then by Lemma \ref{zar} \eqref{zar4}, the rational functions are identically $0$.
\end{proof}
\end{subsection}

\begin{subsection}{Existence of Generic Gelfand-Tsetlin Modules} \label{exist_gen_gz_mods}
We construct generic GT modules of $\uqqq$ and $\uson$. In the case of $\uqqq$ these are infinite-dimensional analogs to the finite-dimensional simple modules from Section \ref{fd_simple_mods}. The generic GT modules of $\uson$ are obtained from the matrix coefficients for $\uqqq$, replacing $q$-numbers with their limit as $q\to 1$. Because of Corollary \ref{rat_gen_q}, references to the formulas in Proposition \ref{rat} are including the more general case when $q\neq 0$ is not a root of unity. While the finite-dimensional simple modules are characterized by the highest weight $m_n$, i.e. the top row of a GT pattern, we need a complex number for each entry in a GT pattern in order to describe generic GT modules. In particular, for each entry below the top row we introduce a complex number $m_{ij}^0$ to parameterize our basis, while the entries in $m_n$ remain only because they appear in the formulas. This is because generic modules are described by \emph{admissible patterns}, seen in the following definition, in which the integral interlacing conditions \eqref{highest_wt_odd}--\eqref{interlacing_even} no longer need to be satisfied. We merely impose conditions on the entries of an admissible pattern such that the matrix coefficients are defined for all integer shifts of the entries $m_{ij}^0$. 

\begin{definition}
Let $m_{nj}\in \C$ for $1\leq j \leq \lfloor \frac{n}{2}\rfloor$ and let $m_{ij}^0\in\C$ for $2\leq i \leq n-1$ and $1\leq j\leq \lfloor\frac{i}{2}\rfloor$, where for all $k\in\Z$ we have the following: $q^{2m_{ij_1}^0+2m_{ij_2}^0+2k}\neq 1$ and $q^{2m_{ij_1}^0-2m_{ij_2}^0+2k}\neq 1$ for $j_1\neq j_2$, for even $i$ we have $q^{2m_{ij}^0+2k}\neq -1$ (see Lemma \ref{lemq} \eqref{lemq3}), and for odd $i$ we have $q^{2m_{ij}^0+2k}\neq 1$ and $q^{4m_{ij}^0+2k}\neq 1$.  Let $N:=\sum_{i=2}^{n-1}\lfloor\frac{i}{2}\rfloor$ and $m_n:=(m_{n1},m_{n2},...,m_{n,\lfloor\frac{n}{2}\rfloor})$. Choose $\alpha^0$ to be the pattern, or rather element of $\C^N$, with entries $m_{ij}^0$. We call such a pattern \emph{admissible}.
\end{definition}
\noindent Then we define
\begin{equation*}
    V_{\alpha^0}^{m_n}:=\bigoplus_{\alpha\in\alpha^0+\Z^N}\C \ket{\alpha}.
\end{equation*}We extend the definition of the so-called ``$l$-coordinates" from the classical modules given by \eqref{i2p_fd_action} to our infinite-dimensional space in the following natural way: For an entry $m_{ij}$ in the admissible pattern $\alpha\in\alpha^0+\Z^N$, $l_{ij}:=m_{ij}+\lfloor\frac{i+1}{2}\rfloor-s$. Also, as in $\eqref{i2p_fd_action}$ the pattern $\alpha_i^{\pm j}$ denotes the admissible pattern $\alpha$ but adding $\pm 1$ to the entry $m_{ij}$.
\begin{theorem}\label{existence_gen_n}
$V_{\alpha^0}^{m_n}$ is a module of $\uqqq$, and the action is defined by \eqref{rat_formulas}.
\end{theorem}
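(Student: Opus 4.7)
The plan is to reduce the theorem to Corollary \ref{rat_gen_q} by a Zariski density argument. There are two steps: showing the action is well-defined on $V_{\alpha^0}^{m_n}$, and verifying that it respects the relations \eqref{rel1}--\eqref{rel3}.

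First, I would check well-definedness. By \eqref{rat_coeff_odd_up}--\eqref{rat_coeff_even_const}, every matrix coefficient is a ratio of products of $q$-numbers $[a]$, where $a$ has the form $l_r' \pm l_j' + c$, $l_j' + c$, or $2l_j' + c$ with $c \in \Z$. By Lemma \ref{lemq} \eqref{lemq1}, such a factor is nonzero iff $q^{2a} \neq 1$, and the fractions $\tfrac{[m]}{[2m]}$ are defined iff $q^{2m} \neq -1$ (Lemma \ref{lemq} \eqref{lemq3}). Translating $l$-coordinates back to the $m_{ij}$ and absorbing all integer offsets into one constant $k$, these non-vanishing conditions become exactly the inequalities $q^{2m_{ij_1}^0 \pm 2m_{ij_2}^0 + 2k}\neq 1$ and $q^{2m_{ij}^0+2k} \neq \pm 1$ built into the definition of admissibility. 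So every coefficient in the action is finite at every $\alpha \in \alpha^0 + \Z^N$, and since each generator $I_{i,i-1}$ shifts $\alpha$ only by $\pm e_j$ in the $(i{-}1)$-th row, the image of each basis vector is a finite linear combination of basis vectors of $V_{\alpha^0}^{m_n}$.

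Second, I would verify the defining relations. Let $F:\C\langle I_{21},\ldots,I_{n,n-1}\rangle \twoheadrightarrow \uqqq$ be the canonical projection and let $u \in \ker F$. Applying $F(u)$ to a basis vector $\ket{\alpha}$ via \eqref{rat_formula_odd}--\eqref{rat_formula_even} expands as a finite sum
\begin{equation*}
F(u).\ket{\alpha} \;=\; \sum_{\beta} f_\beta\bigl(q^{m_{21}},q^{m_{31}},\ldots,q^{m_{nk}}\bigr)\,\ket{\beta},
\end{equation*}
in which each $f_\beta$ is a rational function in the variables $q^{m_{ij}}$, obtained by composing the rational matrix coefficients of Proposition \ref{rat}. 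Corollary \ref{rat_gen_q} guarantees that whenever these variables are specialized to the entries of an honest GT pattern for $\uqqq$ (with any top row), the operator $F(u)$ annihilates the corresponding finite-dimensional simple module, so each $f_\beta$ vanishes at that specialization. Lemma \ref{zar} \eqref{zar4} then forces $f_\beta \equiv 0$ as a rational function, and in particular $f_\beta$ vanishes at the admissible point $\alpha$. Thus $F(u).\ket{\alpha}=0$, which is exactly the statement that the rationalized action respects the relations of $\uqqq$.

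The main obstacle I anticipate is the bookkeeping needed to run the density step cleanly: one must argue that the intermediate denominators appearing while iterating the formulas do not vanish identically as rational functions, so that each $f_\beta$ really is a well-defined element of $\C(q^{m_{ij}})$ for which ``vanishing at every GT pattern'' is a meaningful hypothesis. The admissibility conditions are tailored precisely to give this uniformly over the orbit $\alpha^0 + \Z^N$, after which Lemma \ref{zar} \eqref{zar4} reduces the entire verification to the classical finite-dimensional case already handled in Corollary \ref{rat_gen_q}.
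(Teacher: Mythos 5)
Your proposal is correct and follows essentially the same route as the paper: verify that the admissibility conditions make the coefficients in \eqref{rat_coeff_odd_up}--\eqref{rat_coeff_even_const} well-defined on the whole orbit $\alpha^0 + \Z^N$, then for $u \in \ker F$ observe the coefficients of $F(u).\ket\alpha$ are rational functions in the $q^{m_{ij}}$ that vanish on all classical GT patterns by Corollary \ref{rat_gen_q}, and invoke Lemma \ref{zar}\eqref{zar4} to conclude they vanish identically. Your extra remarks on well-definedness and on the non-identical-vanishing of intermediate denominators are sound elaborations of points the paper states more tersely.
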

\begin{proof}
The above conditions on the $m_{ij}^0$ ensure that \eqref{rat_coeff_odd_up}--\eqref{rat_coeff_even_const} are defined for all $\alpha\in\alpha^0+\Z^N$. Now we follow a similar argument to \cite[Theorem~2]{maz}. Let $u$ be a relation of $\uqqq$, i.e. a nonzero element of the kernel of the natural projection $F$ from the free algebra $\C\langle I_{21},I_{32},...,I_{n,n-1}\rangle$ to $\uqqq$. We want to show that $F(u).\ket{\alpha}=0$ for all patterns $\alpha\in\alpha^0+\Z^N$. Note that the coefficients for the vector $F(u).\ket{\alpha}$ are rational functions in $q^{m_{nj}}$ for all $1\leq j\leq \lfloor\frac{n}{2}\rfloor$ and $q^{m_{ij}}$ for all entries $m_{ij}$ in $\alpha$. For $\textbf{a}\in \Z^N$ we denote the coefficient for $\ket{\alpha+\textbf{a}}$ in $F(u).\ket{\alpha}$ by $f_{\alpha+\textbf{a}}(q^{m_n},q^{\alpha})$.

Consider a classical finite-dimensional simple module $V_{\hat{m}_n}$ of $\uqqq$ with highest weight $\hat{m}_n$ and entries $\hat{m}_{ij}$. Denote a GT pattern with highest weight $\hat{m}_n$ by $\hat{\alpha}$, but for convenience we remove the highest weight (top row) from $\hat{\alpha}$. By Corollary \ref{rat_gen_q}, there exists a basis consisting of $\ket{\hat{\alpha}}$ such that the action of $\uqqq$ on this basis is given by \eqref{rat_formulas}, but of course replacing $\ket{\alpha}$ with $\ket{\hat{\alpha}}$ and $m_{ij}$ with $\hat{m}_{ij}$ in the definition of the $l$-coordinates. Note that $f_{\alpha+\textbf{a}}(q^{\hat{m}_n},q^{\hat{\alpha}})=0$ since $V_{\hat{m}_n}$ is a module. Then by Lemma \ref{zar} \eqref{zar4}, $f_{\alpha+\textbf{a}}$ is identically $0$.
\end{proof}

The following example provides an important family of generic GT modules. Because of Lemma \ref{zar} \eqref{zar_generic}--\eqref{zar_generic_classic}, this family of modules will allow us to prove statements pertaining to all of the generic GT modules $V_{\alpha^0}^{m_n}$.
\begin{example} \label{gen_gz_ex}
The ``highest weight" $m_n$ can consist of any complex numbers we want. Using Lemma \ref{lemq} \eqref{lemq1} and the fact $[b]=0$ if and only if $q^{2b}=1$, it is simple to check that $m_{ij}^0=\frac{1}{p_{ij}}$ for distinct odd primes $p_{ij}$ satisfy the conditions for the existence of the module $V_{\alpha^0}^{m_n}$.
\end{example}
\begin{corollary}\label{existence_gen_n_qto1}
$V_{\alpha^0}^{m_n}$ is a module of $\uson$ with formulas given as in \eqref{rat_formulas}, except $q$-numbers are replaced by their limit as $q\to 1$, i.e. $[a]\to a$.
\end{corollary}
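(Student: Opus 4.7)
The plan is to mirror the argument from Theorem \ref{existence_gen_n} almost verbatim, only replacing the quantum ingredients by their classical counterparts: the rationalized classical matrix coefficients supplied by Corollary \ref{rat_gen_q} and the classical Zariski-density statement Lemma \ref{zar} \eqref{zar4_classic}. Concretely, I define the $\uson$-action on $V_{\alpha^0}^{m_n} = \bigoplus_{\alpha \in \alpha^0 + \Z^N} \C\ket{\alpha}$ by formulas \eqref{rat_formula_odd}–\eqref{rat_formula_even} with every $[a]$ replaced by $a$ (and every $l$-coordinate still given as a linear function of the $m_{ij}$). I then verify that this is well-defined and that it factors through the defining relations of $\uson$.

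For well-definedness, I first observe that, as $q \to 1$, the admissibility conditions in the hypothesis of the definition of an admissible pattern specialize to the linear requirements $m_{ij_1}^0 \pm m_{ij_2}^0 + k \neq 0$ (for $j_1 \neq j_2$) and, for odd $i$, $m_{ij}^0 + k \neq 0$ and $2 m_{ij}^0 + k \neq 0$, for every $k \in \Z$; the factor $\frac{[m]}{[2m]}$ simply becomes $\tfrac{1}{2}$ in the classical limit so no additional condition is needed for even $i$. These guarantee that every denominator appearing in the classical specialization of \eqref{rat_coeff_odd_up}–\eqref{rat_coeff_even_const} remains nonzero after shifting the entries $m_{ij}^0$ by arbitrary integers, exactly as in the denominator discussion inside the proof of Proposition \ref{rat}. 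Hence the action is a well-defined linear map on $V_{\alpha^0}^{m_n}$.

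For the relations, let $F\colon \C\langle I_{21},\ldots,I_{n,n-1}\rangle \twoheadrightarrow \uson$ be the natural projection and let $u \in \ker F$. For a fixed $\alpha \in \alpha^0 + \Z^N$, the image $F(u).\ket{\alpha}$ is a finite $\C$-linear combination of basis vectors $\ket{\alpha+\mathbf{a}}$, whose coefficient $f_{\alpha+\mathbf{a}}(m_n, \alpha)$ is a rational function in the entries $m_{nj}$ and $m_{ij}$ because the classical coefficients are honest rational functions (not square roots of them). Corollary \ref{rat_gen_q} tells us that evaluating these coefficients at any classical GT pattern $\hat\alpha$ with highest weight $\hat m_n$ yields the action on the corresponding finite-dimensional simple $\uson$-module, so $f_{\alpha+\mathbf{a}}(\hat m_n, \hat\alpha) = 0$ for all GT patterns. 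Lemma \ref{zar} \eqref{zar4_classic} then forces $f_{\alpha+\mathbf{a}} \equiv 0$ as a rational function, so in particular it vanishes at $(m_n, \alpha)$. This shows $F(u).\ket{\alpha} = 0$, so the action descends to $\uson$.

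The main point of attention is not a deep obstacle but a bookkeeping one: confirming that every single denominator in the classical specialization of \eqref{rat_coeff_odd_up}–\eqref{rat_coeff_even_const} is controlled by the translated admissibility conditions listed above. Because the $q$-numbers appearing in the denominators of Proposition \ref{rat} are precisely of the forms $[l_j' \pm l_r'], [l_j' \pm l_r' \pm 1], [2 l_j' + c], [l_j']$ (and their $l''$-variants), each one classically reduces to a linear combination of two entries plus an integer, and the admissibility translation above kills the vanishing locus of every such factor shifted by any element of $\Z^N$. Once this is verified, the rest of the argument is an exact copy of the proof of Theorem \ref{existence_gen_n}, with Lemma \ref{zar} \eqref{zar4} replaced by Lemma \ref{zar} \eqref{zar4_classic}.
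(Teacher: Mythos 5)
Your proof is correct and takes essentially the same route as the paper's: invoke the $q\to 1$ specialization of the rationalized coefficients (Corollary \ref{rat_gen_q}) and re-run the density argument of Theorem \ref{existence_gen_n} with Lemma \ref{zar} \eqref{zar4_classic} in place of \eqref{zar4}. Your extra paragraph spelling out how the admissibility conditions specialize to linear conditions $m_{ij_1}^0 \pm m_{ij_2}^0 + k \neq 0$, etc., is a correct and useful elaboration of a point the paper leaves implicit, but it does not change the underlying argument.
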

\begin{proof}
As stated in the proof of Corollary \ref{rat_gen_q}, from Proposition \ref{rat} it follows that we have rationalized the matrix coefficients for finite-dimensional simple modules of $\uson$. Using Lemma \ref{zar} \eqref{zar4_classic} instead of Lemma \ref{zar} \eqref{zar4}, we are done by following the argument from Theorem \ref{existence_gen_n}.
\end{proof}
\end{subsection}

\begin{subsection}{Trivial Intersection of Annihilators} \label{triv_intersection}
For a complex unital associative algebra $A$, let $\textbf{Rep}_{A}^{\text{fin}}$ denote the category of finite-dimensional left $A$-modules, and let
\begin{equation*}
    \jfin:=\bigcap_{V\in\textbf{Rep}_{A}^{\text{fin}}}\operatorname{Ann}_A(V).
\end{equation*}
\begin{definition} \label{def_res_fin}
$A$ is \emph{residually finite} if $\jfin=(0)$.
\end{definition}\noindent This means that for any $a\in A\setminus\{0\}$ there exists a finite-dimensional representation $(V_a,\pi_a)$ of $A$ such that $\pi_a(a)\neq 0$. Harish-Chandra's Theorem states that $U(\mathfrak{g})$ is residually finite for any semisimple Lie algebra $\mathfrak{g}$, including $\mathfrak{g}=\mathfrak{so}_n$. We will need the following result:

\begin{proposition}[{\cite[Corollary]{res}}] \label{res_fin}
$\uqqq$ is residually finite.
\end{proposition}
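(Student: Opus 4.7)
The statement is cited from \cite{res}, so I sketch the approach I would take using only the tools developed in this excerpt, together with the embedding of $\uqqq$ as a coideal subalgebra of $U_q(\mathfrak{gl}_n)$ mentioned in Section~\ref{intro}. The plan is to reduce residual finiteness of $\uqqq$ to the analogous (better-known) statement for $U_q(\mathfrak{gl}_n)$, and then to verify the latter by a Zariski-density/rationalization argument parallel to Proposition~\ref{rat} and Lemma~\ref{zar}.

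First, I would record the general principle that if $\iota:A\hookrightarrow B$ is an injective algebra homomorphism such that every finite-dimensional $B$-module restricts to a finite-dimensional $A$-module, then $B$ residually finite implies $A$ residually finite: given nonzero $u\in A$, pick a finite-dimensional $B$-module $V$ with $\iota(u)\cdot V\neq 0$, and restrict. For $\uqqq\hookrightarrow U_q(\mathfrak{gl}_n)$ (the coideal inclusion from \cite{nou,let}) the restriction hypothesis is automatic, so the problem reduces to showing that $U_q(\mathfrak{gl}_n)$ is residually finite whenever $q$ is not a root of unity.

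To handle $U_q(\mathfrak{gl}_n)$, I would use that its finite-dimensional simple type-$1$ modules $V_\lambda$ are parameterized by dominant integral weights $\lambda$ and carry a Gelfand-Tsetlin basis whose matrix coefficients were rationalized by Ueno et al.\ \cite{kim} (this is the classical analog of Proposition~\ref{rat}). Given nonzero $u\in U_q(\mathfrak{gl}_n)$ and a basis vector $\ket\alpha$ of a generic (Mazorchuk-Turowska) GT module $V_{\alpha^0}^\lambda$ from \cite{maz}, the coefficients of $u.\ket\alpha$ are rational functions in the variables $q^{\lambda_i}$ and $q^{m_{ij}}$. If $u$ annihilated every finite-dimensional simple module, then these rational functions would vanish on the Zariski-dense set of integer GT patterns (by the analog of Lemma~\ref{zar}\eqref{zar4}), hence identically, so $u$ would annihilate every generic GT module of $U_q(\mathfrak{gl}_n)$ as well. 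But the Futorny-Hartwig embedding $U_q(\mathfrak{gl}_n)\hookrightarrow L\#\mathcal{M}$ \cite[Proposition~5.9]{fut_hartwig} provides an \emph{independent} faithful realization, showing that a common annihilator of all such modules must be zero. Contradiction.

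The main obstacle is avoiding circularity in the second step: one needs a faithful realization of $U_q(\mathfrak{gl}_n)$ not logically dependent on residual finiteness. This is exactly what the Futorny-Hartwig skew-group-algebra embedding supplies, and indeed this paper's Theorem~\ref{embedding_intro}\eqref{b_pt_i} is designed to play the analogous role for $\uqqq$ in future work. Thus, modulo invocation of the (independently established) Futorny-Hartwig embedding for $U_q(\mathfrak{gl}_n)$, the argument is self-contained and uses only the rationalization and Zariski-density machinery already in hand.
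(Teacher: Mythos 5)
The paper does not prove this proposition; it simply cites \cite{res}, so there is no internal proof to compare your sketch against. Your proposal is therefore a genuinely independent route, and the right question is whether it is sound.

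Your first reduction is clean and correct: since $\uqqq$ is (isomorphic to) a subalgebra of $U_q(\mathfrak{gl}_n)$ via the coideal/quantum-symmetric-pair realization, residual finiteness of $U_q(\mathfrak{gl}_n)$ immediately implies residual finiteness of $\uqqq$, because restriction of a finite-dimensional $U_q(\mathfrak{gl}_n)$-module along an injection produces a finite-dimensional $\uqqq$-module on the same vector space.

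The gap is in your second step, and you flag it yourself without actually closing it. You want to deduce residual finiteness of $U_q(\mathfrak{gl}_n)$ by arguing: a common annihilator of all finite-dimensional simples kills all generic GT modules (Zariski density), and the Futorny--Hartwig embedding $\varphi\colon U_q(\mathfrak{gl}_n)\hookrightarrow L\#\m$ shows a common annihilator of generic GT modules vanishes. But the injectivity of $\varphi$ in the Futorny--Hartwig construction is established by exactly the same mechanism this paper uses in Theorem~\ref{embedding}: $\varphi$ is defined as $\rho^{-1}\circ\pi$, where $\pi$ is the product representation on all generic GT modules, and one needs $\pi$ to be injective --- i.e.\ one needs the triviality of the intersection of annihilators of generic GT modules, which in turn is deduced from residual finiteness. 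Indeed, in the present paper the logical order is precisely Proposition~\ref{res_fin} $\Rightarrow$ Lemma~\ref{ann_noncl} $\Rightarrow$ Proposition~\ref{pi_inj} $\Rightarrow$ Theorem~\ref{embedding}; you are proposing to run this chain backwards. Asserting that the Futorny--Hartwig embedding is ``independently established'' does not discharge the obligation; you would need to exhibit an argument for the injectivity of their $\varphi$ that does not pass through the separating property of finite-dimensional modules, and you have not done so.

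The fix is simpler than what you propose: rather than re-deriving residual finiteness of $U_q(\mathfrak{gl}_n)$ through Gelfand--Tsetlin machinery, cite the standard fact (available for $q$ not a root of unity, via highest-weight theory, the PBW basis, and Verma module arguments --- see e.g.\ Jantzen's or Joseph's treatments) that finite-dimensional representations of the Drinfeld--Jimbo quantum group separate points. Combined with your step~1, this yields a short, non-circular alternative to the citation of \cite{res}. As written, though, your sketch does not constitute a proof.
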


Recall that we say the finite-dimensional modules from \cite[Section~3]{gav} and \eqref{i2p_fd_action} are \emph{classical} because they correspond to modules of $U(\mathfrak{so}_n)$ as $q\to 1$. The generic GT modules constructed in Theorem \ref{existence_gen_n} come from these. However, the finite-dimensional simple modules from \cite[Section~3]{nonclassical} are called \emph{nonclassical} since they have singularities when $q\to 1$. The following two results were proved in \cite{fd_classification}.
\begin{theorem}[{\cite[Theorem~9.3]{fd_classification}}]\label{classification_fd}
The simple modules of classical and nonclassical type exhaust all simple finite-dimensional modules of $\uqqq$ up to isomorphism. 
\end{theorem}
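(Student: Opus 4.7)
The plan is to prove this by induction on $n$, using the chain of subalgebras $U_q'(\mathfrak{so}_2)\subset U_q'(\mathfrak{so}_3)\subset\cdots\subset \uqqq$ as the scaffolding, exactly mirroring the Gelfand--Tsetlin strategy for $U(\mathfrak{so}_n)$. The base cases $n=2,3$ are handled directly: $U_q'(\mathfrak{so}_2)=\C[I_{21}]$ has only the one-parameter family of one-dimensional evaluation modules (all classical/nonclassical one-dimensional), and for $U_q'(\mathfrak{so}_3)$ one can write down every finite-dimensional simple module explicitly by solving the polynomial relation \eqref{rel2}--\eqref{rel3} on a pair of generators, obtaining precisely the classical families from \cite{gav} together with the nonclassical family of \cite{nonclassical}.

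For the inductive step, let $V$ be a finite-dimensional simple $\uqqq$-module. First I would restrict $V$ to $U_q'(\mathfrak{so}_{n-1})$. A key preliminary is complete reducibility of this restriction, which can be obtained either by exhibiting enough elements of the Gavrilik--Iorgov Casimir subalgebra from \cite{cas} that act semisimply, or by using general results on $\imath$quantum groups. Once semisimplicity is in hand, write $V|_{U_q'(\mathfrak{so}_{n-1})}=\bigoplus_\mu W_\mu$ with each $W_\mu$ a simple module of the smaller algebra; by the inductive hypothesis each $W_\mu$ is either classical or nonclassical and is thus parameterized by a highest weight $\mu$ drawn from a known list.

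Next I would analyze how the new generator $I_{n,n-1}$ connects these isotypic components. By relations \eqref{rel1}--\eqref{rel3}, the operator $I_{n,n-1}$ must satisfy a cubic relation with each $I_{i,i-1}$ for $i\le n-1$; combined with the restriction decomposition this forces $I_{n,n-1}$ to shift the $(n-1)$th-row parameters of $\mu$ by $\pm 1$ (or leave them fixed in the even case), up to a normalization one is free to choose. Writing the matrix coefficients $A_{n-1}^j$, $B_{n-1}^j$, $c$ as unknowns and substituting into the remaining relations (rel2)--(rel3) — specifically the ones tying $I_{n,n-1}$ to $I_{n-1,n-2}$ — yields a finite system of recursions in the pattern entries whose general solution is unique up to rescaling and a choice of parity, producing exactly the classical formula \eqref{i2p1}--\eqref{i2p2} or a shifted variant that one identifies with the nonclassical formulas of \cite{nonclassical}. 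Simplicity of $V$ is then used to show that every classical row forces the next row to be classical, and similarly for the nonclassical case, so no "mixing" is possible; together with finite-dimensionality, this pins the top row to the integral/half-integral lattice (classical) or to its nonclassical shift.

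The main obstacle, in my view, is establishing semisimplicity of finite-dimensional modules over $U_q'(\mathfrak{so}_{n-1})$ in a clean way without circularity; the formal Casimir-based argument works but needs care because $\uqqq$ is not a Hopf algebra, so the usual tensor-product techniques are unavailable and one must instead produce a faithful family of commuting diagonalizable operators from \cite{cas} and check genericity of their joint spectrum. The secondary difficulty, the rigidity step that rules out mixed classical/nonclassical behavior within a single simple module, reduces to showing that the two types of one-dimensional representations of the relevant rank-one subquotient cannot coexist as eigenvalues of a common Casimir on an irreducible module, which is a concrete calculation with the cubic relation but must be done uniformly in $n$.
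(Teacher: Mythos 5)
The paper offers no proof of this statement: it is cited verbatim from \cite[Theorem~9.3]{fd_classification}. So there is no argument in the paper for your proposal to be compared against, and what you have written must stand or fall on its own as a reproof of the Iorgov--Klimyk classification. Read that way, the proposal is a reasonable plan of attack, but the two issues you yourself flag as the ``main obstacle'' and the ``secondary difficulty'' are not side conditions to be cleaned up later --- they \emph{are} the content of the theorem, and the sketch does not actually address them.

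The most serious problem is circularity. In \cite{fd_classification}, complete reducibility of finite-dimensional $\uqqq$-modules (their Corollary 9.4, quoted in this paper as Theorem \ref{fin_compl_red}) is derived as a \emph{consequence} of the classification theorem, not used to prove it. Your inductive step relies on semisimplicity of the restriction to $U_q'(\mathfrak{so}_{n-1})$, and the route you suggest via the Gavrilik--Iorgov Casimirs does not obviously break the circle: those Casimirs act by scalars on \emph{simple} $U_q'(\mathfrak{so}_{n-1})$-modules, but to conclude they act diagonalizably on the restriction of an arbitrary finite-dimensional $\uqqq$-module you already need some decomposition of that restriction, and since $\uqqq$ is not a Hopf algebra the standard tensor/duality arguments for semisimplicity are unavailable. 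Second, the assertion that relations \eqref{rel2}--\eqref{rel3} \emph{force} $I_{n,n-1}$ to shift the $(n-1)$st-row parameters by exactly $\pm 1$ and then determine the matrix coefficients uniquely up to rescaling is not a formal consequence of the cubic relations; it encodes a multiplicity-one branching theorem for the pair $(U_q'(\mathfrak{so}_n),U_q'(\mathfrak{so}_{n-1}))$, which is precisely the hard structural input of the Gelfand--Tsetlin method. Without an independent proof of that branching rule (and of the ensuing rigidity that rules out mixing classical and nonclassical blocks inside one simple module), the induction cannot close.
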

\begin{theorem}[{\cite[Corollary~9.4]{fd_classification}}]\label{fin_compl_red}
Finite-dimensional modules of $\uqqq$ are semisimple.
\end{theorem}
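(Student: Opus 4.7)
The plan is to combine the classification of finite-dimensional simple modules (Theorem \ref{classification_fd}) with a central character separation argument, followed by vanishing of self-extensions. The Gavrilik-Iorgov Casimirs of $U_q'(\mathfrak{so}_i)$, $i = 2, \ldots, n$, generate $\Gamma$ and act by scalars on any finite-dimensional simple; if these scalars separate isomorphism classes of simples, then generalized $\Gamma$-eigenspace decomposition already reduces the problem to handling modules whose composition factors are all isomorphic.

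First I would compute, using the rationalized formulas \eqref{rat_coeff_odd_up}--\eqref{rat_coeff_even_const}, the tuple of scalars by which the Casimirs act on a distinguished basis vector of each classical simple $V_{m_n}$ and each nonclassical simple. These eigenvalues are (Laurent-)polynomial in $q^{m_{nj}}$ in the classical case, and similarly explicit in the nonclassical case, so injectivity on isomorphism classes reduces via Lemma \ref{zar}\eqref{zar4} to a generic check on highest weights. Given separation, every finite-dimensional module $V$ decomposes as $V = \bigoplus_\chi V_\chi$ into $\uqqq$-submodules, where the composition factors of $V_\chi$ are all isomorphic to a single simple $S_\chi$.

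The remaining step is to show $\operatorname{Ext}^1_{\uqqq}(S,S) = 0$ for every finite-dimensional simple $S$. Given $0 \to S \to V \to S \to 0$, lift a GT-type basis of the quotient copy of $S$ to $V$ and measure the failure of the $I_{i,i-1}$-action to agree with the simple action; the result is a cocycle valued in $S$. The defining relations \eqref{rel1}--\eqref{rel3} impose linear constraints on this cocycle, and I would show — using the non-vanishing denominators supplied by Proposition \ref{rat} to invert the relevant linear maps — that these constraints force the cocycle to be a coboundary.

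The main obstacle will be the last step, since \eqref{rel2} and \eqref{rel3} are cubic in the generators, so the cocycle conditions are non-linear and the bookkeeping is delicate. A cleaner route I would try first is to combine Proposition \ref{res_fin} with a specialization argument: $\uqqq$ is a flat family deforming $\uson$ at $q = 1$, whose finite-dimensional modules are semisimple by Weyl's theorem; for $q$ not a root of unity one hopes to transport semisimplicity through this family, either by lifting the central idempotents of $\uson/\!\operatorname{Ann}(V_0)$ for a fixed highest weight through an integral form, or by a continuity argument on the finitely many finite-dimensional quotients detected by a given module, bypassing the explicit cocycle analysis entirely.
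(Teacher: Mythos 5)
The paper does not prove this statement at all: it is quoted verbatim as \cite[Corollary~9.4]{fd_classification} (Iorgov--Klimyk) and treated as a black box, so there is no ``paper's own proof'' to compare against. Your proposal is an attempt to reprove a cited external result, which is a different enterprise from what the paper does here.

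Evaluating the proposal on its own terms, there are genuine gaps. The ``cleaner route'' you say you would try first --- transporting semisimplicity from $\uson$ at $q=1$ through a flat family --- cannot work as stated, and the paper itself tells you why: the nonclassical finite-dimensional simple modules of $\uqqq$ (Theorem~\ref{classification_fd}, discussed in the proof of Lemma~\ref{ann_noncl}) have singularities at $q\to 1$ and simply do not specialize to $\uson$-modules. Any argument that lifts idempotents or deforms the $q=1$ picture will only see the classical simples and will say nothing about extensions involving nonclassical ones. So this route is not a shortcut; it is an obstruction.

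The main route also leaves the hard work undone. You assert that the Gavrilik--Iorgov Casimir eigenvalues separate isomorphism classes of simples, but this is exactly the kind of statement that needs an argument: Theorem~\ref{cas_eigvals} shows the eigenvalues are $W$-invariant symmetric functions of the $[s_{ni}]$, so two highest weights in the same $W$-orbit (or, more delicately, a classical and a nonclassical weight whose $q^{s_{ni}}$ happen to coincide after a shift by $\pm i\pi/2h$) would a priori give the same central character. Lemma~\ref{zar} is about Zariski density of evaluation points and does not bear on injectivity of the weight-to-character map. Finally, even granting central-character separation, the reduction is only to showing $\operatorname{Ext}^1_{\uqqq}(S,S)=0$ for each simple $S$, and you concede that the cocycle analysis against the cubic Serre-type relations \eqref{rel2}--\eqref{rel3} is not carried out. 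As it stands the proposal is an outline with the two load-bearing steps (separation and $\operatorname{Ext}^1$-vanishing) both missing, plus a fallback route that is structurally blocked by the nonclassical modules. The honest conclusion is the one the paper reaches: cite Iorgov--Klimyk.
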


Thus all finite-dimensional modules of $\uqqq$ have been classified. The following result is a strengthening of Proposition \ref{res_fin}:
\begin{lemma}\label{ann_noncl}
If $u\in\uqqq$ annihilates every classical module of $\uqqq$, then $u=0$.
\end{lemma}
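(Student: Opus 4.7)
The plan is to combine the residual finiteness of $\uqqq$ (Proposition \ref{res_fin}) with a Zariski-density argument applied to the rationalized matrix coefficients from Proposition \ref{rat}. Suppose $u \in \uqqq$ annihilates every classical module. By Theorems \ref{classification_fd} and \ref{fin_compl_red}, every finite-dimensional $\uqqq$-module is a direct sum of classical and nonclassical simple modules, so in view of Proposition \ref{res_fin} it suffices to show that $u$ also annihilates every nonclassical simple module.

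Fix any noncommutative polynomial expression for $u$ in the generators $I_{i,i-1}$. Using the rationalized formulas \eqref{rat_formula_odd}--\eqref{rat_formula_even}, for each shift $\mathbf{a}\in\Z^N$ the coefficient of $\ket{\alpha+\mathbf{a}}$ appearing in the expansion of $u\cdot\ket{\alpha}$ is obtained by evaluating a single rational function $f_{\mathbf{a}}\in\C(q^{m_{ij}})$ in the $q$-powers of all entries of the GT pattern $\alpha$ (including the top row $m_n$). Since $u$ annihilates every classical module, each such $f_{\mathbf{a}}$ vanishes on the tuple $(q^{m_{ij}})$ coming from any classical GT pattern. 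Lemma \ref{zar} \eqref{zar4} then forces each $f_{\mathbf{a}}$ to be identically zero.

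The nonclassical simple modules from \cite{nonclassical} have bases labeled by GT-type patterns whose entries are obtained from classical entries by imaginary shifts of the form $i\pi/(2h)$; in particular, the corresponding $q^{m_{ij}}$ values differ from classical ones by factors from $\{\pm 1,\pm i\}$, and the matrix coefficients on these bases are evaluations of the same rational expressions from Proposition \ref{rat} at these shifted parameters. Because each $f_{\mathbf{a}}$ is the zero rational function in $\C(q^{m_{ij}})$, it vanishes at these shifted evaluation points as well, so $u$ acts by zero on every nonclassical simple module. Combined with the hypothesis, Theorems \ref{classification_fd} and \ref{fin_compl_red} now imply that $u$ annihilates every finite-dimensional module of $\uqqq$, and Proposition \ref{res_fin} yields $u=0$.

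The main obstacle is the third step: carefully verifying that the matrix coefficients of $u$ on the nonclassical modules really are evaluations of the same rational functions $f_{\mathbf{a}}$ derived from the classical action. This demands a direct comparison of the explicit formulas in \cite{nonclassical} with the rationalized ones in Proposition \ref{rat}, tracking how the imaginary shifts in the highest weight propagate through the $l$-coordinates and the $q$-number expressions. Once this identification is in place, the Zariski-density step carries the vanishing from classical to nonclassical parameter values with no additional effort.
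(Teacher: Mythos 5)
Your overall strategy is the same as the paper's: reduce to the nonclassical simple modules via Theorems \ref{classification_fd} and \ref{fin_compl_red}, use Zariski-density (Lemma \ref{zar} \eqref{zar4}) to promote vanishing on classical patterns to identical vanishing of the rational coefficient functions, evaluate at the imaginary shifts that define the nonclassical formulas, and finish with Proposition \ref{res_fin}. The identification step you flag as ``the main obstacle'' is indeed the substantive content of the lemma, and as written your proposal only asserts it.

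Here is concretely what the paper does to close that gap, which your sketch should include. The nonclassical formulas in \cite{nonclassical} are the \emph{unrationalized} classical formulas \eqref{a2pj}--\eqref{b2pj} with $m_{ij}$ shifted by $\pm i\pi/(2h)$, so one cannot simply ``evaluate the rational functions from Proposition~\ref{rat} at shifted points'' without first producing a valid re-scaling of the nonclassical basis. The paper applies the same imaginary shifts to the $q$-numbers in the recursion formulas \eqref{rat_recursion_odd} and \eqref{rat_recursion_even} and then checks two things: (a) the shifted recursion is still well-defined on the nonclassical basis, which is a proper subset of a classical basis (the recursion is in terms of raising entries of the lower row, so this survives the restriction), and (b) the shifted $\lambda$-values are nonzero, using that for real $r$ one has $[r\pm i\pi/(2h)]\neq 0$ because $e^{2hr}e^{\pm i\pi}\neq 1$. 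These two checks are exactly what makes the re-scaled nonclassical matrix coefficients equal to the rational functions $f_{\mathbf a}$ evaluated at $q^{\pm i\pi/(2h)}q^{m_{ij}}$. The paper also first restricts to $q=e^h$ with $h\in\R\setminus\{0\}$ so that the square roots behave multiplicatively, and then upgrades to all non-root-of-unity $q$ by the density argument of Corollary~\ref{rat_gen_q}; your sketch implicitly works with arbitrary such $q$ and should either invoke the same extension or restrict and extend as the paper does. A minor imprecision: $q^{\pm i\pi/(2h)}=\pm i$ exactly, so the shift factors are $\pm i$ rather than a general element of $\{\pm 1,\pm i\}$. Once these verifications are supplied, your argument matches the paper's proof.
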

\begin{proof}
In \cite[Section~3]{nonclassical} the nonclassical modules are constructed as follows: the basis is a subset of the basis of a classical simple module whose entries are all half-integers (not integers) and whose highest weight consists of all positive entries. (The nonclassical basis is a proper subset of the classical basis because all entries must be positive in patterns of nonclassical type. Negative entries can still occur in classical patterns even if the highest weight consists of all positive entries.) The nonclassical formulas are obtained from the classical formulas by shifting the $m_{ij}$ by complex numbers of the form $\pm i \pi/2h$, where $h$ comes from $q=e^h$. (These modules are in fact direct sums of simple nonclassical modules.) For now, assume $h\in\R\setminus\{0\}$, as we did in Proposition \ref{rat}. Using the rationalized formulas from Proposition \ref{rat}, applying these shifts grants us rational formulas for the (semisimple) nonclassical modules. Indeed, in the proof of Proposition \ref{rat}, we see that the coefficients become rationalized through the squaring or cancellation of all the $q$-numbers present in the classical formulas. So we apply the aforementioned complex shifts to the $q$-numbers in \eqref{rat_recursion_odd} and \eqref{rat_recursion_even}, then use them to re-scale the basis vectors in the (semisimple) nonclassical modules. (The recursions in \eqref{rat_recursion_odd} and \eqref{rat_recursion_even} are defined in terms of raising the entries in the lower row by $1$, so this recursion is well-defined in the nonclassical case even though the basis is a proper subset of a certain classical basis.) This remains a proper re-scaling since for any real number $r$, $[r\pm i\pi/2h]=0$ if and only if $e^{2hr\pm i\pi}=e^{2hr}e^{\pm i\pi}=1$, which is impossible since $2hr\in\R$. Thus the formulas are now rationalized for $h\in\R\setminus\{0\}$. By the same argument as in Corollary \ref{rat_gen_q} (note that the $q\to 1$ case does not apply here), the formulas are thus rationalized for any $q\neq 0$ where $q$ is not a root of unity. 

For a basis vector $\ket{\alpha}$ in a classical module, $u.\ket{\alpha}$ is a linear combination of vectors whose coefficients are rational functions in $q^{m_{ij}}$. If $u$ annihilates every classical module, these rational functions are zero when evaluated at $q^{m_{ij}}$ for any collection of $m_{ij}$ forming a (classical) GT pattern of $\uqqq$. In the semisimple nonclassical modules described above, the coefficients of $u.\ket{\alpha}$ for a nonclassical pattern $\alpha$ (which is a particular type of classical pattern) are the same rational functions as in the classical case except they are evaluated at $q^{\pm i\pi/2h}q^{m_{ij}}$. By Lemma \ref{zar} \eqref{zar4}, $u$ annihilates these semisimple nonclassical modules. By Theorem \ref{classification_fd}, Theorem \ref{fin_compl_red}, and Proposition \ref{res_fin}, we are done.
\end{proof}

Now we use a density argument and earlier results in this section to obtain the following result for generic GT modules.
\begin{proposition} \label{pi_inj}
The intersection of annihilators over all generic GT modules of $\uqqq$, constructed in Theorem \ref{existence_gen_n}, is zero.
\end{proposition}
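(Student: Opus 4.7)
The plan is to reduce the claim to Lemma \ref{ann_noncl} via a Zariski-density argument fed by the specialized admissible patterns from Example \ref{gen_gz_ex}. The bridge between generic and classical modules is that, by Proposition \ref{rat} and Corollary \ref{rat_gen_q}, a single rational function in the $q^{m_{ij}}$ describes the corresponding matrix coefficient of any fixed $u \in \uqqq$ on both kinds of modules.

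Concretely, I would fix $u$ in the intersection of the annihilators and, mimicking the proof of Theorem \ref{existence_gen_n}, for each shift $\mathbf{a} \in \Z^N$ write the coefficient of $\ket{\alpha + \mathbf{a}}$ in $u.\ket{\alpha}$ as a rational function $g_{\mathbf{a}}(q^{m_n}, q^{\alpha})$ in the $s$ variables $q^{m_{ij}}$. The hypothesis that $u$ annihilates every generic GT module $V_{\alpha^0}^{m_n}$ then gives $g_{\mathbf{a}}(q^{m_n}, q^{\alpha^0}) = 0$ whenever $\alpha^0$ is admissible and $m_n \in \C^{r_n}$, taking in particular the case of the generator $\ket{\alpha^0}$ itself.

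Next I would feed in the family from Example \ref{gen_gz_ex}: for any assignment of pairwise distinct odd primes $(p_{ij})$ to the positions below the top row, the pattern with entries $m_{ij}^0 := 1/p_{ij}$ is admissible, so we obtain a generic GT module for any top row $m_n \in \C^{r_n}$. Arranging the chosen primes into a strictly decreasing sequence, the vanishing of $g_{\mathbf{a}}$ on this family is exactly the hypothesis of Lemma \ref{zar} \eqref{zar_generic}: the first $r_n$ arguments range over all complex values via the freedom in $m_n$, while the remaining $s - r_n$ arguments are fixed at reciprocals of the chosen primes. Hence $g_{\mathbf{a}} \equiv 0$ as a rational function.

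With each $g_{\mathbf{a}}$ identically zero, specialization to the $q$-values of any classical GT pattern $\hat{\alpha}$ with highest weight $\hat{m}_n$ gives $u.\ket{\hat{\alpha}} = 0$ in the classical module $V_{\hat{m}_n}$, so $u$ annihilates every classical module and Lemma \ref{ann_noncl} forces $u = 0$. The main delicate point is coordinating the roles of the top-row variables and the lower-row variables so that the hypothesis of Lemma \ref{zar} \eqref{zar_generic} applies verbatim; once Example \ref{gen_gz_ex} certifies admissibility of the prime-reciprocal patterns, the density step is essentially bookkeeping.
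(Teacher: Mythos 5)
Your proposal is correct and follows the paper's own argument: fix $u$ in the intersection, use Example \ref{gen_gz_ex} to obtain admissible patterns with lower-row entries $1/p_{ij}$ and free top row, invoke Lemma \ref{zar}\eqref{zar_generic} to conclude the coefficient rational functions vanish identically, deduce that $u$ annihilates every classical module, and finish with Lemma \ref{ann_noncl}. Your version is slightly more explicit about the bookkeeping (fixing the shift $\mathbf{a}$, evaluating at the generator $\ket{\alpha^0}$, and arranging the primes in decreasing order so the hypothesis of Lemma \ref{zar}\eqref{zar_generic} applies verbatim), but it is the same proof.
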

\begin{proof}
Suppose $u\in\uqqq$ annihilates every generic GT module. By Example \ref{gen_gz_ex}, we may construct an admissible pattern where the top row consists of any complex numbers and the other entries have the form $\frac{1}{p_{ij}}$ for distinct odd primes $p_{ij}$. Let $V_{\alpha^0}^{m_n}$ be a generic GT module of this form. Since the coefficients of $u.\ket{\alpha}$ are rational functions which are zero when evaluated at $q^{m_{ij}}$ for entries $m_{ij}$ in the GT pattern $\alpha$, it follows from Lemma \ref{zar} \eqref{zar_generic} that these formulas are identically $0$. Thus $u$ annihilates every classical finite-dimensional module. By Lemma \ref{ann_noncl}, $u=0$.
\end{proof}

\begin{corollary}\label{pi_inj_qto1}
The intersection of annihilators over all generic GT modules of $\uson$, given in Corollary \ref{existence_gen_n_qto1}, is zero.
\end{corollary}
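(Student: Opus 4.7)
The plan is to adapt the proof of Proposition~\ref{pi_inj} to the classical setting, replacing the quantum ingredients with their $q\to 1$ counterparts: the generic GT modules from Corollary~\ref{existence_gen_n_qto1} in place of those from Theorem~\ref{existence_gen_n}; Lemma~\ref{zar}~\eqref{zar_generic_classic} in place of Lemma~\ref{zar}~\eqref{zar_generic}; and Harish-Chandra's theorem for $\uson$ (recalled before Proposition~\ref{res_fin}) in place of Lemma~\ref{ann_noncl}. Concretely, I would begin by assuming that $u\in\uson$ annihilates every generic GT module $V_{\alpha^0}^{m_n}$ constructed in Corollary~\ref{existence_gen_n_qto1}. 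For each fixed $\alpha\in\alpha^0+\Z^N$ and each $\textbf{a}\in\Z^N$, the coefficient $f_{\alpha+\textbf{a}}$ of $\ket{\alpha+\textbf{a}}$ in $u.\ket{\alpha}$ is a rational function in the variables $m_{ij}$, obtained from the formulas of Proposition~\ref{rat} by taking the limit $q\to 1$.

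Next I would select admissible patterns in the spirit of Example~\ref{gen_gz_ex}: allow $m_n$ to range over arbitrary complex tuples and set $m_{ij}^0 = 1/p_{ij}$ for distinct odd primes $p_{ij}$, $2\leq i\leq n-1$. The $q\to 1$ limit of the admissibility conditions requires certain $\Z$-linear combinations of the $m_{ij}^0$ to be nonzero, and each such combination is automatically nonzero for a family of reciprocals of distinct odd primes, by unique factorization. Since $u$ annihilates all such $V_{\alpha^0}^{m_n}$ by hypothesis, each $f_{\alpha+\textbf{a}}$ vanishes on the entire family of parameter values just described, and Lemma~\ref{zar}~\eqref{zar_generic_classic} then forces $f_{\alpha+\textbf{a}}\equiv 0$ identically as a rational function.

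Specializing the $m_{ij}$ back to (half-)integers satisfying \eqref{interlacing_odd}--\eqref{interlacing_even} and the $m_{nj}$ to a dominant weight shows that $u$ annihilates every finite-dimensional simple $\uson$-module. Since $\mathfrak{so}_n$ is semisimple, Harish-Chandra's theorem then yields $u=0$. The argument is genuinely lighter than its quantum counterpart: no nonclassical modules intervene, so one bypasses Lemma~\ref{ann_noncl} entirely. The only item requiring attention is verifying that the classical admissibility conditions are satisfied by the chosen prime-reciprocal pattern, which is immediate; consequently there is no real obstacle here.
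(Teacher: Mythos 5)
Your proposal is correct and follows essentially the same route as the paper: run the Proposition~\ref{pi_inj} argument with prime-reciprocal admissible patterns (Example~\ref{gen_gz_ex}), invoke Lemma~\ref{zar}~\eqref{zar_generic_classic} in place of \eqref{zar_generic} to conclude the coefficient rational functions vanish identically, deduce that $u$ annihilates all finite-dimensional $\uson$-modules, and finish with Harish-Chandra's theorem. Your observation that the nonclassical-module step (Lemma~\ref{ann_noncl}) is bypassed in the classical case matches the paper's proof exactly.
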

\begin{proof}
Suppose $u\in\uson$ annihilates every generic GT module of $\uson$. Using the argument from Proposition \ref{pi_inj} but invoking Lemma \ref{zar} \eqref{zar_generic_classic} instead of Lemma \ref{zar} \eqref{zar_generic}, it follows that $u$ annihilates every finite-dimensional module of $\uson$. By Harish-Chandra's Theorem, $u=0$.
\end{proof}
\end{subsection}
\end{section}

\begin{section}{Embedding into a Skew Group Algebra} \label{emb}
We use generic GT modules from Theorem \ref{existence_gen_n} to embed $\uqqq$ into a skew group algebra in Section \ref{emb_quantum}, and likewise we embed $\uson$ into a skew group algebra in Section \ref{emb_classical}.

\begin{subsection}{Quantum case} \label{emb_quantum}
In this section, we assume the conditions on $q$ and assume we have admissible GT patterns as defined in Section \ref{exist_gen_gz_mods}, ensuring that the generic GT modules of $\uqqq$ exist. Now consider the generic GT module $V_{\alpha^0}^{m_n}$ of $\uqqq$, defined in Theorem \ref{existence_gen_n}. Define $r_k:=\lfloor\frac{k}{2}\rfloor$, and let $Z:=\{z_{ki}^{\pm 1}\mid 2\leq k\leq n, \ 1\leq i\leq r_k\}$ and $D:=\{\delta_{ki}\mid 2\leq k\leq n-1, \ 1\leq i\leq r_k\}$ be collections of linear maps on $V_{\alpha^0}^{m_n}$ defined as follows:
\begin{subequations}
\begin{align} \label{skew_group_action}
    z_{ki}^{\pm 1}\ket{\alpha}&=q^{\pm m_{ki}}\ket{\alpha},\\
    \delta_{ki}\ket{\alpha}&=\ket{\alpha_k^{+i}},\\
    \delta_{ki}\inv\ket{\alpha}&=\ket{{\alpha}_k^{-i}},
\end{align}
\end{subequations}where $m_{ki}$ denotes the $ki$-th entry of the admissible pattern $\alpha$. (Recall the definitions of $\alpha_k^{\pm i}$ given after \eqref{i2p_fd_action}.) Thus we have $z_{ki}$ variables corresponding to each entry in a GT pattern and a $\delta_{ki}$ shift operator corresponding to every entry below the top row, since these are the entries that are shifted under the action of generators of $\uqqq$ on $V_{\alpha^0}^{m_n}$. Let $\Lambda$ be the Laurent polynomial algebra generated by $Z$, $L:=\operatorname{Frac}\Lambda$, and $\m$ the free abelian group generated by $D$. One can check that we obtain an action of $\m$ on $L$ via
\begin{equation} \label{hopf_action}
    ~^{\delta_{ki}}z_{lj}^{\pm 1}=\begin{cases}
    q^{\mp 1} z_{lj}^{\pm 1}, & (k,i)=(l,j)\\
    z_{lj}^{\pm 1}, & \text{else}
    \end{cases}.
\end{equation}Since $\m$ is a group acting on $L$ by automorphisms, $L$ is a $\C\m$-module algebra, where $\C\m$ is the group algebra of $\m$. Thus we may define the \emph{skew group algebra} $L\#\m$, an example of a Hopf smash product. 
\begin{definition}
 $L\#\m$ is the (complex associative unital) algebra equal to $L\otimes_{\C}\C\m$ as a vector space with product determined by
\begin{equation*}
    (f\otimes \mu_1)\cdot (g\otimes \mu_2)=(f\cdot ~^{\mu_1}g)\otimes \mu_1\mu_2; \ \ \ \forall f,g\in L; \ \forall\mu_1,\mu_2\in\m.
\end{equation*}
\end{definition}
Note that for entry $m_{ij}$ in the admissible pattern $\alpha$ and for any $k\in\Z$,
\begin{equation}\label{rep_smash_prod_relationship}
    [m_{ij}+k]\ket{\alpha}=\frac{q^{m_{ij}+k}-q^{-m_{ij}-k}}{q-q\inv}\ket{\alpha}=\frac{q^kz_{ij}-q^{-k}z_{ij}\inv}{q-q\inv}\ket{\alpha}.
\end{equation}Thus for convenience, we give $\mathbf{a}_{2p}^j$, $\mathbf{\hat{a}}_{2p}^j$, $\mathbf{b}_{2p-1}^j$, $\mathbf{\hat{b}}_{2p-1}^j$, and $\mathbf{c}_{2p-1}$ as in \eqref{rat_coeff} but $[m_{ij}+k]$ is replaced by $\frac{q^kz_{ij}-q^{-k}z_{ij}\inv}{q-q\inv}\in \Lambda$.
\begin{theorem} \label{embedding}
There exists an injective algebra map $\varphi:\uqqq\hookrightarrow L\#\m$;
\begin{subequations}
\begin{align} \label{gen_image_smash}
    I_{2p+1,2p}&\mapsto \sum_{j=1}^p\delta_{2p,j} \mathbf{a}_{2p}^j-\sum_{j=1}^{p}\delta_{2p,j}\inv\mathbf{\hat{a}}_{2p}^j, \\
    I_{2p,2p-1}&\mapsto \sum_{j=1}^{p-1} \delta_{2p-1,j}\mathbf{b}_{2p-1}^j-\sum_{j=1}^{p-1}\delta_{2p-1,j}\inv\mathbf{\hat{b}}_{2p-1}^j+i\cdot \mathbf{c}_{2p-1}.
\end{align}
\end{subequations}
\end{theorem}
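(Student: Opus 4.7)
The plan is to construct $\varphi$ by first lifting the stated formulas to an algebra map $\tilde\varphi$ from the free algebra $\C\langle I_{21},\ldots,I_{n,n-1}\rangle$ into $L\#\m$, then checking that it factors through $\uqqq$, and finally invoking Proposition \ref{pi_inj} for injectivity. The central observation is that each admissible generic GT module $V_{\alpha^0}^{m_n}$ is simultaneously a $\uqqq$-module (Theorem \ref{existence_gen_n}) and carries an action of (a suitable localization of) $L\#\m$ through the prescribed actions of $z_{ki}^{\pm 1}$ and $\delta_{ki}^{\pm 1}$. Under this second action, identity \eqref{rep_smash_prod_relationship} forces $\tilde\varphi(I_{i,i-1})$ to act on $\ket{\alpha}$ by exactly the rationalized formulas of Proposition \ref{rat}, matching the $\uqqq$-action.

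For well-definedness, let $u$ be a relation of $\uqqq$. Writing $\tilde\varphi(u) \in L\#\m$ uniquely in its PBW form $\sum_{\mu \in \m} f_\mu \mu$ with finitely many nonzero $f_\mu \in L$, the correspondence above yields
$\tilde\varphi(u).\ket{\alpha} = \sum_\mu f_\mu(q^{m_n}, q^{\alpha}) \ket{\mu\alpha}$,
where $\mu\alpha$ denotes the integer shift of $\alpha$ by $\mu \in \m$. Since $u$ is a relation and the $\ket{\mu\alpha}$ for distinct $\mu$ are linearly independent in $V_{\alpha^0}^{m_n}$, each $f_\mu(q^{m_n}, q^{\alpha})$ must vanish for every admissible $\alpha^0$ and every $\alpha \in \alpha^0 + \Z^N$. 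Specializing to the family of admissible patterns from Example \ref{gen_gz_ex} with $m_{ij}^0 = 1/p_{ij}$ for distinct odd primes $p_{ij}$ and arbitrary top row $m_n$, Lemma \ref{zar}\eqref{zar_generic} applies and forces $f_\mu = 0$ as a rational function. Hence $\tilde\varphi(u) = 0$ in $L\#\m$, and $\tilde\varphi$ descends to the desired algebra homomorphism $\varphi: \uqqq \to L\#\m$.

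For injectivity, suppose $\varphi(u) = 0$. The correspondence above shows that $u$ annihilates every generic GT module $V_{\alpha^0}^{m_n}$, so Proposition \ref{pi_inj} forces $u = 0$.

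The main technical obstacle is the well-definedness step. One must work carefully with the localization implicit in having the coefficient denominators of $\mathbf{a}_{2p}^j,\mathbf{\hat{a}}_{2p}^j,\mathbf{b}_{2p}^j,\mathbf{\hat{b}}_{2p}^j,\mathbf{c}_{2p}$ act sensibly on $V_{\alpha^0}^{m_n}$, and then supply a sufficiently rich family of admissible patterns so that the evaluations of the $f_\mu$ at the corresponding $q$-points fill a Zariski-dense subset of $\C^s$ in all $s$ coordinates (including those indexed by the top row $m_n$). Example \ref{gen_gz_ex} is precisely tailored to match the hypothesis of Lemma \ref{zar}\eqref{zar_generic}, and once the action of $L\#\m$ on $V_{\alpha^0}^{m_n}$ is set up unambiguously, the density argument resolves the obstacle uniformly.
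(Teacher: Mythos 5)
Your proposal is correct and follows essentially the same strategy as the paper: lift the formulas to the free algebra, show relations map to zero via the Zariski-density argument (Lemma \ref{zar}\eqref{zar_generic} applied through Example \ref{gen_gz_ex}), and obtain injectivity from Proposition \ref{pi_inj}. The paper packages this via a commutative diagram involving the direct product $V$ of all generic GT modules and the explicit localization $\Lambda_S\#\m$ (chosen so that the denominators in the coefficients are units, hence the action on $V$ is defined and the map $\rho$ into $\operatorname{End}(V)$ is injective); you identify the same localization issue as the ``main technical obstacle'' and sketch the same resolution, so the two arguments are mathematically equivalent, only differing in presentation.
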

\begin{proof}
We follow the argument from \cite[Proposition~5.9]{fut_hartwig} for $U_q(\mathfrak{gl}_n)$. Let $F$ be the free complex associative unital algebra generated by $\{I_{21},I_{32},...,I_{n,n-1}\}$. Let $p:F\to\uqqq$ be the canonical projection $I_{i,i-1}\mapsto I_{i,i-1}$. Let $S$ be the multiplicative submonoid of $\Lambda\setminus\{0\}$ generated by $\{q^az_{ki}-q^{-a}z_{ki}\inv\mid a \in \Z, \; 2\leq k\leq n-1, \; 1\leq i \leq r_k\} \cup \{q^a\frac{z_{ki}}{z_{kj}}-q^{-a}(\frac{z_{ki}}{z_{kj}})\inv\mid a\in\Z, \; 2\leq k \leq n-1, \; 1\leq i<j\leq r_k\} \cup \{q^az_{ki}z_{kj}-q^{-a}(z_{ki}z_{kj})\inv\mid a\in\Z, \; 2\leq k\leq n-1, \; 1\leq i\leq j \leq r_k\}$ and let $\Lambda_S$ denote the localization of $\Lambda$ at $S$. Note that $S$ is $\m$-invariant, hence $\Lambda_S\#\m$ is a subalgebra of $L\#\m$. Let $\psi:F\to \Lambda_S\#\m$ be the algebra map given by
\begin{align*}
    I_{2p+1,2p}&\mapsto \sum_{j=1}^p\delta_{2p,j} \mathbf{a}_{2p}^j-\sum_{j=1}^{p}\delta_{2p,j}\inv\mathbf{\hat{a}}_{2p}^j, \\
    I_{2p,2p-1}&\mapsto \sum_{j=1}^{p-1} \delta_{2p-1,j}\mathbf{b}_{2p-1}^j-\sum_{j=1}^{p-1}\delta_{2p-1,j}\inv\mathbf{\hat{b}}_{2p-1}^j+i\cdot \mathbf{c}_{2p-1}.
\end{align*}Let $V$ be the direct product of all generic GT modules $V_{\alpha^0}^{m_n}$ (so an arbitrary element may have infinitely many nonzero components). Let $\pi:\uqqq\to\operatorname{End}(V)$ and $\rho:\Lambda_S\#\m\to\operatorname{End}(V)$ be the respective product representations. By Proposition \ref{pi_inj}, $\pi$ is injective. The map $\rho$ is also injective. Indeed, suppose $b\in\operatorname{Ann}_{\Lambda_S\#\m}(V)$. Write $b=\sum_{\mu\in\m}f_{\mu}\mu$ where $f_{\mu}\in \Lambda_S$, only finitely many of which are nonzero. Applying the rational function $f_{\mu}$ to a basis vector $\ket{\alpha}$ is the same as evaluating $f_{\mu}$ at $q^{m_{ij}}$ for entries $m_{ij}$ in the admissible pattern $\alpha$. Thus each $f_{\mu}$ is zero when evaluated at all the patterns given in Example \ref{gen_gz_ex}, so Lemma \ref{zar} \eqref{zar_generic} implies $f_{\mu}=0$ for all $\mu\in\m$. Therefore by the way we have chosen $\psi$ and $\rho$ we have the following commutative diagram:
\[
\begin{tikzcd}
F \arrow[r,twoheadrightarrow,"p"] \arrow[d,swap,"\psi"] & \uqqq \arrow[d,hookrightarrow,"\pi"]\\
\Lambda_S\#\m \arrow[r,hookrightarrow,"\rho"] & \operatorname{End}(V)
\end{tikzcd}.
\]
Now we argue that $\psi$ is constant on fibers of $p$. Suppose $b,c\in p\inv(a)$. Since the diagram commutes, $(\rho\circ\psi)(b-c)=0$. Since $\rho$ is injective, $\psi(b-c)=0$, hence $\psi(b)=\psi(c)$. Since $\psi$ is constant on fibers of $p$ and $\operatorname{Im}\pi\subseteq\operatorname{Im}\rho$ ($p$ is surjective and the diagram commutes), the following map is well-defined: $\varphi(a):=\psi(p\inv(a))=\rho\inv(\pi(a))$. Since $\operatorname{Im}\pi\subseteq\operatorname{Im}\rho$ and $\rho$ is injective, $\varphi$ is clearly an algebra map, and since $\pi$ is injective, it is clear that $\varphi$ is injective. Now we have the following commuting diagram:
\[
\begin{tikzcd}
F \arrow[r,twoheadrightarrow,"p"] \arrow[d,swap,"\psi"] & \uqqq \arrow[d,hookrightarrow,"\pi"] \arrow[dl,hookrightarrow,swap,"\exists! \varphi"]\\
\Lambda_S\#\m \arrow[r,hookrightarrow,"\rho"] & \operatorname{End}(V)
\end{tikzcd}.
\]
\end{proof}
\end{subsection}

\begin{subsection}{Classical case} \label{emb_classical}
By Corollary \ref{existence_gen_n_qto1}, $V_{\alpha^0}^{m_n}$ is also a module of $\uson$. We denote the coefficients from the formulas by $a_{2p}^{j(1)}(\alpha)$, $\hat{a}_{2p}^{j(1)}(\alpha)$, $b_{2p-1}^{j(1)}(\alpha)$, $\hat{b}_{2p-1}^{j(1)}(\alpha)$, and $c_{2p-1}^{(1)}(\alpha)$, as in \eqref{rat_coeff}, except we replace $q$-numbers $[a]\to a$. Define $\Lambda_1:=\C[x_{ki}\mid 2\leq k\leq n, \ 1\leq i\leq r_k]$ and $L_1:=\operatorname{Frac}\Lambda_1$. We consider the $x_{ki}$ to be linear maps on $V_{\alpha^0}^{m_n}$ defined by
\begin{equation*}
    x_{ki}\ket{\alpha}=m_{ki}\ket{\alpha}.
\end{equation*}We obtain an action of $\m$ on $L_1$ via
\begin{equation*}
    ~^{\delta_{ki}}x_{lj}=\begin{cases}
    x_{lj}-1, & (k,i)=(l,j)\\
    x_{lj}, & \text{else}
    \end{cases}.
\end{equation*}For every entry $m_{ij}$ in the admissible pattern $\alpha$ and for any $k\in\Z$,
\begin{equation*}
    (m_{ij}+k)\ket{\alpha}=(x_{ij}+k)\ket{\alpha}.
\end{equation*}Thus for convenience, we give $\mathbf{a}_{2p}^{j(1)}$, $\mathbf{\hat{a}}_{2p}^{j(1)}$, $\mathbf{b}_{2p-1}^{j(1)}$, $\mathbf{\hat{b}}_{2p-1}^{j(1)}$, and $\mathbf{c}_{2p-1}^{(1)}$ as in Corollary \ref{existence_gen_n_qto1}, but $m_{ij}+k$ is replaced by $x_{ij}+k\in\Lambda_1$.
\begin{theorem} \label{embedding_qto1}
There exists an injective algebra map $\varphi_1:\uson\hookrightarrow L_1\#\m$;
\begin{subequations}
\begin{align}
    I_{2p+1,2p}&\mapsto \sum_{j=1}^p\delta_{2p,j} \mathbf{a}_{2p}^{j(1)}-\sum_{j=1}^{p}\delta_{2p,j}\inv\mathbf{\hat{a}}_{2p}^{j(1)}, \\
    I_{2p,2p-1}&\mapsto \sum_{j=1}^{p-1} \delta_{2p-1,j}\mathbf{b}_{2p-1}^{j(1)}-\sum_{j=1}^{p-1}\delta_{2p-1,j}\inv\mathbf{\hat{b}}_{2p-1}^{j(1)}+i\cdot \mathbf{c}_{2p-1}^{(1)}.
\end{align}
\end{subequations}
\end{theorem}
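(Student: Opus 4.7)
The plan is to mirror the proof of Theorem \ref{embedding} line by line, substituting the classical versions of all tools used. Let $F$ denote the free complex associative unital algebra on $\{I_{21},I_{32},\ldots,I_{n,n-1}\}$ and let $p_1\colon F\to\uson$ be the canonical projection. Let $S_1$ be the multiplicative submonoid of $\Lambda_1\setminus\{0\}$ generated by the linear forms that appear in the denominators of $\mathbf{a}_{2p}^{j(1)}, \mathbf{\hat a}_{2p}^{j(1)}, \mathbf{b}_{2p}^{j(1)}, \mathbf{\hat b}_{2p}^{j(1)}, \mathbf{c}_{2p}^{(1)}$—namely elements of the form $x_{ki}+a$, $x_{ki}\pm x_{kj}+a$ with $a\in\Z$—and observe that $S_1$ is $\m$-invariant, so $(\Lambda_1)_{S_1}\#\m$ is a subalgebra of $L_1\#\m$. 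Define $\psi_1\colon F\to(\Lambda_1)_{S_1}\#\m$ by sending each generator to its prescribed image in the statement.

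Next, let $V$ be the direct product of all generic GT modules $V_{\alpha^0}^{m_n}$ of $\uson$ given by Corollary \ref{existence_gen_n_qto1}, and let $\pi_1\colon\uson\to\operatorname{End}(V)$ and $\rho_1\colon(\Lambda_1)_{S_1}\#\m\to\operatorname{End}(V)$ be the associated representations. By Corollary \ref{pi_inj_qto1}, the map $\pi_1$ is injective. To see that $\rho_1$ is injective, suppose $b=\sum_{\mu\in\m}f_\mu\,\mu\in\operatorname{Ann}_{(\Lambda_1)_{S_1}\#\m}(V)$ with only finitely many $f_\mu$ nonzero; evaluating $f_\mu$ on the basis vector $\ket{\alpha}$ is the same as evaluating the rational function $f_\mu$ at the tuple of entries of $\alpha$. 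Running over the family of admissible patterns with $m_{ij}^0=1/p_{ij}$ for distinct odd primes $p_{ij}$ (Example \ref{gen_gz_ex}) and arbitrary top row $m_n\in\C^{\lfloor n/2\rfloor}$, Lemma \ref{zar} \eqref{zar_generic_classic} forces each $f_\mu$ to be identically zero, hence $b=0$.

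By construction the formulas defining $\psi_1$ agree with the action of $\uson$ on every $V_{\alpha^0}^{m_n}$, so the square
\[
\begin{tikzcd}
F \arrow[r,twoheadrightarrow,"p_1"] \arrow[d,swap,"\psi_1"] & \uson \arrow[d,hookrightarrow,"\pi_1"]\\
(\Lambda_1)_{S_1}\#\m \arrow[r,hookrightarrow,"\rho_1"] & \operatorname{End}(V)
\end{tikzcd}
\]
commutes. If $b,c\in p_1^{-1}(a)$, then $\rho_1(\psi_1(b-c))=\pi_1(p_1(b-c))=0$, and injectivity of $\rho_1$ yields $\psi_1(b)=\psi_1(c)$, so $\psi_1$ is constant on fibers of $p_1$. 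Because $\operatorname{Im}\pi_1\subseteq\operatorname{Im}\rho_1$, the assignment $\varphi_1(a):=\psi_1(p_1^{-1}(a))=\rho_1^{-1}(\pi_1(a))$ is a well-defined algebra homomorphism, and it is injective since $\pi_1$ is.

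The one step requiring genuine checking, as opposed to pure transcription, is the injectivity of $\rho_1$: we need to know that the denominators one inverts in forming $(\Lambda_1)_{S_1}$ do not vanish on the Zariski-dense set of parameter tuples used in Example \ref{gen_gz_ex}, so that the evaluation argument applies. This is built into the definition of admissibility, as all such denominators are nonzero under the numerical constraints on $m_{ij}^0$ and on the top row; after that, the remainder of the proof is formally identical to the quantum case, with Corollary \ref{pi_inj_qto1} and Lemma \ref{zar} \eqref{zar_generic_classic} replacing their quantum counterparts.
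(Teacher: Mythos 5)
Your proposal matches the paper's proof essentially line for line: same free algebra and projection $p_1$, same choice of the multiplicative monoid $S_1$ of linear forms, same direct-product module $V$, injectivity of $\pi_1$ from Corollary \ref{pi_inj_qto1}, injectivity of $\rho_1$ via Example \ref{gen_gz_ex} and Lemma \ref{zar}~\eqref{zar_generic_classic}, and the same factorization-through-fibers argument to obtain $\varphi_1$. The closing remark on why the denominators in $\Lambda_1^{S_1}$ stay nonvanishing on the Zariski-dense parameter set is a correct and welcome elaboration of a point the paper leaves implicit.
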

\begin{proof}
Let $p_1:F\to\uson$ be the canonical projection $I_{i,i-1}\mapsto I_{i,i-1}$. Let $S$ be the multiplicative submonoid of $\Lambda_1\setminus\{0\}$ generated by $\{x_{ki}+a\mid a\in\Z, \ 2\leq k\leq n-1, \ 1\leq i\leq r_k\} \cup \{x_{ki}-x_{kj}+a\mid a\in\Z, \; 2\leq k\leq n-1, \; 1\leq i<j\leq r_k\} \cup \{x_{ki}+x_{kj}+a\mid a\in\Z, \; 2\leq k\leq n-1, \; 1\leq i\leq j\leq r_k\}$ and let $\Lambda_1^S$ denote the localization of $\Lambda_1$ at $S$. $S$ is $\m$-invariant, hence $\Lambda_1^S\#\m$ is a subalgebra of $L_1\#\m$. We obtain a commutative diagram analogous to the one in Theorem \ref{embedding}:
\[
\begin{tikzcd}
F \arrow[r,twoheadrightarrow,"p_1"] \arrow[d,swap,"\psi_1"] & \uson \arrow[d,hookrightarrow,"\pi_1"]\\
\Lambda_1^S\#\m \arrow[r,hookrightarrow,"\rho_1"] & \operatorname{End}(V)
\end{tikzcd}.
\]
The map $\pi_1$ is injective by Corollary \ref{pi_inj_qto1}. The map $\rho_1$ is injective as well. Indeed, suppose $b\in\operatorname{Ann}_{\Lambda_1^S\#\m}(V)$. Write $b=\sum_{\mu\in\m}f_{\mu}\mu$ where $f_{\mu}\in\Lambda_1^S$, only finitely many of which are nonzero. Applying the rational function $f_{\mu}$ to a basis vector $\ket{\alpha}$ is the same as evaluating $f_{\mu}$ at $m_{ij}$ for entries $m_{ij}$ in the admissible pattern $\alpha$. By Example \ref{gen_gz_ex} and Lemma \ref{zar} \eqref{zar_generic_classic}, $f_{\mu}=0$. Therefore $b=0$. Following the rest of the argument from Theorem \ref{embedding}, we have the following commutative diagram:
\[
\begin{tikzcd}
F \arrow[r,twoheadrightarrow,"p_1"] \arrow[d,swap,"\psi_1"] & \uson \arrow[d,hookrightarrow,"\pi_1"] \arrow[dl,hookrightarrow,swap,"\exists! \varphi_1"]\\
\Lambda_1^S\#\m \arrow[r,hookrightarrow,"\rho_1"] & \operatorname{End}(V)
\end{tikzcd}.
\]
\end{proof}
\end{subsection}
\end{section}

\begin{section}{Harish-Chandra Subalgebra} \label{hc}
In Section \ref{hc_quantum}, we prove that the image of the GT subalgebra $\Gamma$ of $\uqqq$ under the embedding from Theorem \ref{embedding} is an invariant algebra of a Laurent polynomial ring under the group action of a direct product of ($q$-)Weyl groups. We then show that $\Gamma$ is a Harish-Chandra subalgebra of $\uqqq$. In Section \ref{hc_classical}, we show that the image of the corresponding subalgebra $\Gamma_1$ of $\uson$ under the embedding from Theorem \ref{embedding_qto1} is an invariant algebra of a polynomial ring under the group action of the direct product of Weyl groups. Lastly, we show that $\Gamma_1$ is a Harish-Chandra subalgebra of $\uson$.

\begin{subsection}{Quantum case} \label{hc_quantum}
For $1\leq d\leq k:=\lfloor{\frac{n}{2}}\rfloor$ there are Casimir elements of $\uqqq$ denoted by $C_n^{(2d)}$ given in \cite[Theorem~1]{cas}, where $C_n^{(n)}$ is replaced by the simpler $C_n^{(n)+}$ when $n$ is even. (Since $\pi$ in the proof of Theorem \ref{embedding} is injective, we see by the eigenvalues given in the following Theorem \ref{cas_eigvals} that $(C_n^{(n)+})^2=C_n^{(n)}$. Also $C_n^{(n)+}=C_n^{(n)-}$ from \cite{cas}.) Though these elements are central in $\uqqq$, it is only conjectured that they generate the whole center \cite{cas}.

Let $\textbf{a}=(a_1,a_2,...,a_k)$ be a sequence of complex numbers. In the following theorem, we use the so-called \emph{generalized factorial elementary symmetric polynomials} \cite[Proposition~2.2]{capelli_id}:
\begin{equation*}
    e_d(y_1,...,y_k\mid \textbf{a})=\sum_{1\leq p_1<p_2<\dots<p_d\leq k}\prod_{r=1}^d (y_{p_r}-a_{p_r-r+1}).
\end{equation*}
\begin{theorem}[{\cite[Theorem~2]{cas}}] \label{cas_eigvals}
The eigenvalue of the operator $C_n^{(2d)}$ on classical finite-dimensional simple $\uqqq$-modules is
\begin{equation*}
    \chi_{m_n}^{(2d)}=(-1)^de_d([s_{n1}]^2,[s_{n2}]^2,...,[s_{nk}]^2\mid \textbf{a})
\end{equation*}where $\textbf{a}=([\epsilon]^2,[\epsilon+1]^2,[\epsilon+2]^2,...)$, $s_{ni}=m_{ni}+k-i+\epsilon$. Here $\epsilon=0$ when $n=2k$ and $\epsilon=\frac{1}{2}$ when $n=2k+1$.

When $n=2k$,
\begin{equation*}
    \pi_{m_n}(C_n^{(n)+})=(\sqrt{-1})^k\bigg(\prod_{r=1}^k [s_{nr}]\bigg)\mathbf{1}.
\end{equation*}
\end{theorem}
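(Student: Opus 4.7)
The plan is to combine centrality of $C_n^{(2d)}$ with an explicit computation on a distinguished basis vector of $V_{m_n}$. Since $C_n^{(2d)}$ lies in the center of $\uqqq$ and $V_{m_n}$ is simple and finite-dimensional, Schur's lemma forces $C_n^{(2d)}$ to act as a scalar $\chi_{m_n}^{(2d)}\cdot\mathbf{1}$, and the scalar can be read off from $C_n^{(2d)}\ket{v}$ for any convenient nonzero $v$. A natural choice is the \emph{extremal} GT vector $\ket{\alpha^{\max}}$ corresponding to the pattern in which each lower entry is pushed to the top of its interlacing range, i.e.\ $m_{ij}=m_{i+1,j}$ for every admissible $(i,j)$.

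This choice has the useful property that every raising summand in \eqref{i2p1}--\eqref{i2p2} vanishes on $\ket{\alpha^{\max}}$: the coefficient $A_{2p}^{j}$ contains the factor $[l_j-l_j'-1]=[m_{2p+1,j}-m_{2p,j}]$ and $B_{2p}^{j}$ contains $[l_j-l_j']=[m_{2p+2,j}-m_{2p+1,j}]$, and both are zero at $\alpha^{\max}$. Consequently $\chi_{m_n}^{(2d)}$ equals the ``diagonal'' contribution to $C_n^{(2d)}\ket{\alpha^{\max}}$: writing the Gavrilik--Iorgov formula for $C_n^{(2d)}$ from \cite[Theorem~1]{cas} as a sum of ordered monomials in the $I_{i,i-1}$ and tracking only those sequences of raisings and lowerings whose net effect on $\ket{\alpha^{\max}}$ is zero, one obtains a closed combinatorial sum in the $q$-numbers $[s_{nj}]$.

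The final step is to identify this sum with $(-1)^{d}e_{d}([s_{n1}]^{2},\ldots,[s_{nk}]^{2}\mid\textbf{a})$. The Casimir $C_n^{(2d)}$ is built out of $d$-fold nested quadratic expressions in the $I_{i,i-1}$, so each closed return-path is naturally indexed by a strictly increasing sequence $1\le p_{1}<\cdots<p_{d}\le k$. The $r$th quadratic block at level $p_{r}$ contributes a factor $[s_{n,p_{r}}]^{2}-[\epsilon+p_{r}-r]^{2}$, reproducing exactly the general term $\prod_{r}(y_{p_{r}}-a_{p_{r}-r+1})$ of the generalized factorial symmetric polynomial, while the sign $(-1)^{d}$ comes from the alternating sign between raising and lowering parts in each quadratic block. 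For the refined claim about $C_n^{(n)+}$ at $n=2k$, one uses $(C_n^{(n)+})^{2}=C_n^{(n)}$ together with the first part to conclude that the eigenvalue is $\pm(\sqrt{-1})^{k}\prod_{r}[s_{nr}]$; the overall sign is fixed either by a direct computation on $\ket{\alpha^{\max}}$ or by comparison with the classical Pfaffian Casimir of $\mathfrak{so}_{2k}$ in the limit $q\to1$.

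The main obstacle is the combinatorial identification in the third step: the Gavrilik--Iorgov expression for $C_n^{(2d)}$ is an involved sum over nested products, and matching its closed-path expansion on $\ket{\alpha^{\max}}$ to the staircase shifts $p_{r}-r+1$ inside $e_{d}(\,\cdot\mid\textbf{a})$ requires careful bookkeeping together with repeated use of the elementary $q$-identities from Lemma \ref{lemq}. A partial shortcut would be to verify the formula first in the limit $q\to1$, where the classical Perelomov--Popov eigenvalue formula for $\mathfrak{so}_{n}$ gives the answer directly, and then to observe that both sides are rational functions in the $q^{m_{ni}}$ agreeing on the Zariski-dense set supplied by Lemma \ref{zar}\eqref{zar4}; but obtaining an explicit rational-function expression for $\chi_{m_n}^{(2d)}$ still requires unravelling the Casimir, so the combinatorial calculation cannot be entirely avoided.
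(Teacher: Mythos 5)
The paper does not prove this theorem; it is cited verbatim from \cite[Theorem~2]{cas} (Gavrilik--Iorgov), so there is no internal proof to compare against.

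Your sketch follows the standard Perelomov--Popov template, and the observations that the raising coefficients $A_{2p}^j$ and $B_{2p}^j$ carry the vanishing factors $[m_{2p+1,j}-m_{2p,j}]$ and $[m_{2p+2,j}-m_{2p+1,j}]$ at the extremal vector are correct. However, the proposal has a genuine gap precisely where you flag one: the claimed match between the closed-path sum on $\ket{\alpha^{\max}}$ and $(-1)^d e_d([s_{n1}]^2,\ldots,[s_{nk}]^2\mid\textbf{a})$ is read off from the shape of the target formula rather than derived from the explicit Gavrilik--Iorgov expression for $C_n^{(2d)}$; that derivation is the entire content of the theorem, and moreover closed paths include lower-then-raise sequences which do not vanish on $\ket{\alpha^{\max}}$, so the computation is not a purely diagonal read-off. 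Two further issues deserve attention. Your step for $C_n^{(n)+}$ invokes $(C_n^{(n)+})^2=C_n^{(n)}$, but in this paper that identity is deduced \emph{from} Theorem~\ref{cas_eigvals} (see the parenthetical preceding the theorem statement), so using it to prove the theorem is circular unless you supply an independent algebraic proof of the square relation. Also, when $n=2k$ with $m_{nk}<0$, the pattern defined by $m_{ij}=m_{i+1,j}$ violates the nonnegativity constraint $m_{2p+1,p}\geq 0$ in \eqref{interlacing_odd}, so the extremal vector must be chosen more carefully there; the Zariski-density fallback you mention can in principle repair this, but it still requires an explicit rational expression for $\chi_{m_n}^{(2d)}$, which circles back to the unfinished combinatorial step.
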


\begin{definition}
Let $\Gamma$ be the (commutative) subalgebra of $\uqqq$ generated by $C_i^{(2d)}$ for $2\leq i\leq n$ and $1\leq d\leq k$, except we replace $C_i^{(i)}$ with $C_i^{(i)+}$ when $i$ is even. We define $C_2^{(2)+}=I_{21}$. We call $\Gamma$ the \emph{Gelfand-Tsetlin subalgebra} (GT subalgebra) of $\uqqq$. Let $\Gamma^{(n)}$ be the subalgebra of $\Gamma$ generated by $C_n^{(2d)}$ for $1\leq d\leq k$, except we replace $C_n^{(n)}$ with $C_n^{(n)+}$ when $n=2k$. Let $\Lambda^{(n)}$ be the subalgebra of $\Lambda$ (hence of $L\#\m$) generated by $z_{ni}^{\pm 1}$ for $1\leq i\leq k$.
\end{definition}

We argue in the proof of the following corollary that the generators of $\Gamma^{(n)}$ act as scalar operators on generic GT modules $V_{\alpha^0}^{m_n}$, with analogous eigenvalues to those seen in Theorem \ref{cas_eigvals}. 

\begin{corollary} \label{gamma_contained_lambda}
The Casimir elements $C_n^{(2d)}$ (and $C_n^{(n)+}$ when $n$ is even) act as scalar operators on generic GT modules $V_{\alpha^0}^{m_n}$, even when the modules are not simple. The eigenvalues of the generators are those given in Theorem \ref{cas_eigvals}, except every $m_{ni}$ comes from the $k$-tuple $m_n$ that parameterizes $V_{\alpha^0}^{m_n}$ instead of a highest weight of a finite-dimensional simple module. Moreover, $\varphi(\Gamma^{(r)})\subseteq \Lambda^{(r)}$ for $2\leq r\leq n$, thus $\varphi(\Gamma)\subseteq \Lambda$, where $\varphi:\uqqq\hookrightarrow L\#\m$ is the map from Theorem \ref{embedding}. 
\end{corollary}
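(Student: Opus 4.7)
The plan is to deduce the corollary from Theorem \ref{cas_eigvals} via the Zariski-density machinery of Lemma \ref{zar}. The key observation is that for each $r\in\{2,\ldots,n\}$ and $d\in\{1,\ldots,\lfloor r/2\rfloor\}$, the element $C_r^{(2d)}$ (and $C_r^{(r)+}$ when $r$ is even) acts on every basis vector $\ket{\alpha}$ of every generic GT module $V_{\alpha^0}^{m_n}$ as the scalar $\chi_{m_r}^{(2d)}$ from Theorem \ref{cas_eigvals}, evaluated at the $r$-th row of $\alpha$. For $r=n$ the top row is constant on $V_{\alpha^0}^{m_n}$, giving the stated scalar-operator statement with the asserted eigenvalues; for $r<n$ the eigenvalue depends on $\alpha$ but $C_r^{(2d)}$ still acts diagonally, which is what is needed for the containment $\varphi(\Gamma^{(r)})\subseteq\Lambda^{(r)}$.

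To establish the scalar action I would expand $C_r^{(2d)}.\ket{\alpha}$ via the rationalized formulas of Proposition \ref{rat}, producing a finite sum
\begin{equation*}
C_r^{(2d)}.\ket{\alpha}=\sum_{\mathbf{a}\in\Z^N}g_{\mathbf{a}}(q^{m_n},q^{\alpha})\ket{\alpha+\mathbf{a}}
\end{equation*}
with rational coefficients $g_{\mathbf{a}}$ in the $q^{m_{ij}}$-variables. Using the chain $U_q'(\mathfrak{so}_r)\subset\uqqq$, the subspace of a classical simple $V_{\hat m_n}$ spanned by those basis vectors with fixed rows $\hat m_r,\hat m_{r+1},\ldots,\hat m_{n-1}$ is a $U_q'(\mathfrak{so}_r)$-submodule on which the generators $I_{i,i-1}$ with $i\leq r$ act by exactly the matrix coefficients \eqref{i2p1}--\eqref{i2p2} of the classical simple $U_q'(\mathfrak{so}_r)$-module with highest weight $\hat m_r$. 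Theorem \ref{cas_eigvals} applied at level $r$ then gives $C_r^{(2d)}.\ket{\hat\alpha}=\chi_{\hat m_r}^{(2d)}\ket{\hat\alpha}$, so $g_{\mathbf{a}}$ vanishes at every classical GT pattern whenever $\mathbf{a}\neq 0$, and $g_0$ agrees with $\chi_{m_r}^{(2d)}$ there. Lemma \ref{zar} \eqref{zar4} upgrades these evaluations to identities of rational functions, establishing the scalar action on all generic modules.

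For the containment $\varphi(\Gamma^{(r)})\subseteq\Lambda^{(r)}$, I would write $\varphi(C_r^{(2d)})=\sum_{\mu\in\m}f_\mu\mu\in L\#\m$ with $f_\mu\in L$. Since $\varphi(C_r^{(2d)})$ acts on each $\ket{\alpha}$ as a scalar while the vectors $\{\ket{\mu\alpha}\}_{\mu\in\m}$ are linearly independent in $V_{\alpha^0}^{m_n}$, every $f_\mu$ with $\mu\neq\mathrm{id}$ must vanish at $q^{\alpha}$ for all admissible $\alpha$, and $f_{\mathrm{id}}$ must agree with the Laurent polynomial $\chi_{m_r}^{(2d)}(q^{m_r})$. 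Applying Lemma \ref{zar} \eqref{zar_generic} to the family of admissible patterns from Example \ref{gen_gz_ex} (as in the injectivity of $\rho$ in the proof of Theorem \ref{embedding}) forces $f_\mu=0$ for $\mu\neq\mathrm{id}$ and identifies $f_{\mathrm{id}}$ with an element of $\Lambda^{(r)}$. Taking the union over $r$ and $d$ yields $\varphi(\Gamma)\subseteq\Lambda$.

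The main obstacle I anticipate is the branching step: identifying slices of a classical simple $\uqqq$-module (with fixed upper rows) with classical simple $U_q'(\mathfrak{so}_r)$-modules. This should follow by direct inspection of the formulas \eqref{i2p1}--\eqref{i2p2} — the $I_{i,i-1}$-coefficients for $i\leq r$ involve only entries from rows $\leq r$ via the $l$-coordinates — but one must verify carefully that the dependence on row $r$ entries matches the highest-weight conventions of the classical simple $U_q'(\mathfrak{so}_r)$-module. Once this branching is confirmed, the remaining density and linear-independence arguments are routine.
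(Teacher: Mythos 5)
Your proof is correct and takes essentially the same route as the paper's: express $C_r^{(2d)}.\ket{\alpha}$ in rational coefficients via the rationalized formulas, use Theorem \ref{cas_eigvals} to pin those coefficients down on classical GT patterns, and invoke Lemma \ref{zar} to promote these evaluations to identities of rational functions. Where you go beyond the paper's own write-up is the $r<n$ case: the paper's proof only explicitly treats $C_n^{(2d)}$ (the top row is constant on $V_{\alpha^0}^{m_n}$, so the scalar statement is immediate) and leaves the containment $\varphi(\Gamma^{(r)})\subseteq\Lambda^{(r)}$ for $r<n$ tacit, whereas your branching observation — that the slice of a classical simple $\uqqq$-module with rows $\hat m_r,\ldots,\hat m_{n-1}$ fixed is a classical simple $U_q'(\mathfrak{so}_r)$-module of highest weight $\hat m_r$, since the coefficients and shifts of $I_{i,i-1}$ with $i\leq r$ in \eqref{i2p1}--\eqref{i2p2} depend only on rows $\leq r$ — is exactly the missing step, and it is correct. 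Your closing argument that $\varphi(C_r^{(2d)})=\sum_{\mu}f_\mu\mu$ has $f_\mu=0$ for $\mu\neq\mathrm{id}$ by diagonal action plus Zariski density is also a sound way to land the image in $\Lambda^{(r)}$; the paper leans instead on $\varphi=\rho^{-1}\circ\pi$ from Theorem \ref{embedding}, but the two are equivalent.
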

\begin{proof}
Consider the generator $C_n^{(2d)}$ and basis vector $\ket{\alpha}$ in $V_{\alpha^0}^{m_n}$. (We exclude $C_n^{(n)+}$ when $n$ is even from this proof since the argument is identical.) Recall that $\alpha^0\in \C^N$ for $N:=\sum_{i=2}^{n-1}\lfloor \frac{i}{2}\rfloor$. 
\begin{equation*}
    C_n^{(2d)}.\ket{\alpha}=\sum_{\textbf{a}\in S}f_{\alpha+\textbf{a}}(q^{m_{ij}})\ket{\alpha+\textbf{a}}
\end{equation*}where $S$ is a finite subset of $\Z^N$ such that $(0,...,0)\in S$ and $f_{\alpha+\textbf{a}}(q^{m_{ij}})$ denotes a rational function evaluated at $q^{m_{ij}}$ for all entries $m_{ij}$ in the pattern $\alpha$ including the top row $m_n$. Note that since the formulas for the generators of $\uqqq$ on a basis vector in $V_{\alpha^0}^{m_n}$ are constructed to be identical to the rationalized formulas for classical finite-dimensional simple modules given in Corollary \ref{rat_gen_q}, we have the same underlying rational functions $f_{\alpha+\textbf{a}}=f_{\hat{\alpha}+\textbf{a}}$ describing the coefficients when $C_n^{(2d)}$ acts on a basis vector $\ket{\hat{\alpha}}$ in the classical modules. Then by Theorem \ref{cas_eigvals}, for all $\textbf{a}\neq (0,...,0)$, $f_{\alpha+\textbf{a}}$ is zero when evaluated at any collection of $q^{\hat{m}_{ij}}$ for entries $\hat{m}_{ij}$ defining a classical GT pattern. Therefore by Lemma \ref{zar} \eqref{zar4}, these rational functions are identically zero, so $C_n^{(2d)}$ acts diagonally on $V_{\alpha^0}^{m_n}$ with respect to the basis given in the definition of $V_{\alpha^0}^{m_n}$. Also, since $f_{\alpha}=f_{\hat{\alpha}}$, we see by Theorem \ref{cas_eigvals} that $f_{\alpha}(q^{m_{ij}})$ depends only on the entries $m_{nj}$ which characterize $V_{\alpha^0}^{m_n}$ and thus does not depend on the basis vector $\ket{\alpha}$. Thus $C_n^{(2d)}$ acts as a scalar operator on $V_{\alpha^0}^{m_n}$. It also follows from Theorem \ref{cas_eigvals} that this eigenvalue is a Laurent polynomial in the $q^{m_{ij}}$ variables.
\end{proof}

Let $z_{nj}':=q^{k-j+\epsilon}\cdot z_{nj}$. (Recall that $k=\lfloor\frac{n}{2}\rfloor$, $1\leq j \leq k$, $\epsilon=0$ when $n=2k$ and $\epsilon=\frac{1}{2}$ when $n=2k+1$.) Note that $\Lambda^{(n)}=\C[(z_{nj}')^{\pm 1}\mid 1\leq j\leq k]$. Consider the automorphisms $\sigma_{nj}$ and $\tau_{nj}$ of $\Lambda^{(n)}$ defined by
    \begin{align*}
        \sigma_{nj}((z_{nj}')^{\pm 1})&=(z_{nj}')^{\mp 1},\\
        \tau_{nj}((z_{nj}')^{\pm 1})&=-(z_{nj}')^{\pm 1},
    \end{align*}and every $(z_{nr}')^{\pm 1}$ generator is fixed for $r\neq j$. Then the group $(\Z/2\Z\times \Z/2\Z)^k$ acts on $\Lambda^{(n)}$ by automorphisms via
    \begin{equation*}
        ~^{(a_j,b_j)_{j=1}^k}f=\bigg(\prod_{j=1}^k\sigma_{nj}^{a_j}\tau_{nj}^{b_j}\bigg)(f),
    \end{equation*}
    where we employ the convention that $a_j,b_j\in\{0,1\}$ for all $1\leq j \leq k$. The symmetric group $S_k$ acts on $\Lambda^{(n)}$ by permuting the indices of the $(z_{nj}')^{\pm 1}$ variables. We also have an action of $S_k$ on $(\Z/2\Z\times \Z/2\Z)^k$ by permuting the indices of the $(a_j,b_j)$. Let $G$ be the group $W_2^q\times W_3^q\times \dots \times W_n^q$ where
    \begin{equation*}
        W_i^q:=\begin{cases}
        (\Z/2\Z\times \Z/2\Z)^k\rtimes S_k, & i=2k+1\\
        (\Z/2\Z\times \Z/2\Z)^{k}_{\text{even}}\rtimes S_{k}, & i=2k
        \end{cases}
    \end{equation*}and $(\Z/2\Z\times \Z/2\Z)^{k}_{\text{even}}$ is the kernel of the group homomorphism $h:(\Z/2\Z\times \Z/2\Z)^{k}\to\Z/2\Z$;
    \begin{equation*}
        (a_j,b_j)_{j=1}^k\mapsto\sum_{j=1}^k(a_j+b_j).
    \end{equation*}Thus $W_i^q$ acts on $\Lambda^{(i)}$ by automorphisms for each $2\leq i\leq n$, which enables us to extend to an action of $G$ on $\Lambda$ by automorphisms. The following theorem describes the image of the GT subalgebra of $\uqqq$ under the injective algebra map $\varphi:\uqqq\hookrightarrow L\#\m$ from Theorem \ref{embedding}:
\begin{theorem} \label{lambda_fg}
    $\varphi(\Gamma)=\Lambda^G$. In particular, $\Lambda$ is finitely generated as a $\varphi(\Gamma)$-module.
\end{theorem}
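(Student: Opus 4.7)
The plan is to reduce the equality $\varphi(\Gamma) = \Lambda^G$ to a single-level computation for each $r$ and then deduce finite generation from the finiteness of $G$. Since the factor $W_r^q$ of $G$ acts only on $\Lambda^{(r)}$, the invariant ring decomposes as $\Lambda^G = \bigotimes_{r=2}^n (\Lambda^{(r)})^{W_r^q}$. On the algebra side, $\Gamma$ is commutative (each $C_i^{(2d)}$ is central in $U_q'(\mathfrak{so}_i)$, which contains $U_q'(\mathfrak{so}_{i'})$ for $i' < i$) and Corollary \ref{gamma_contained_lambda} places $\varphi(\Gamma^{(r)})$ inside $\Lambda^{(r)}$; since the $\Lambda^{(r)}$ live in pairwise-disjoint variable sets inside $\Lambda$, this gives $\varphi(\Gamma) = \bigotimes_{r=2}^n \varphi(\Gamma^{(r)})$. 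It therefore suffices to show $\varphi(\Gamma^{(r)}) = (\Lambda^{(r)})^{W_r^q}$ for each fixed $r$.

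To make this local computation tractable, substitute $z_{rj}' = q^{r_r - j + \epsilon} z_{rj}$ so that $[s_{rj}] = (z_{rj}' - (z_{rj}')^{-1})/(q - q^{-1})$. Corollary \ref{gamma_contained_lambda} combined with Theorem \ref{cas_eigvals} tells us that $\varphi(C_r^{(2d)})$ acts on every basis vector $\ket{\alpha}$ of $V_{\alpha^0}^{m_n}$ as the scalar $(-1)^d e_d([s_{r1}]^2, \ldots, [s_{r,r_r}]^2 \mid \mathbf{a})$ evaluated at the $r$-th row of $\alpha$. Since those entries can be arranged to fill a Zariski-dense set in $(\C^*)^{r_r}$ (by varying the admissible data along the lines of Example \ref{gen_gz_ex} and integer shifts thereof), the density argument of Lemma \ref{zar} identifies $\varphi(C_r^{(2d)}) \in \Lambda^{(r)}$ with exactly that expression now read as a Laurent polynomial in the $z_{rj}'^{\pm 1}$; analogously $\varphi(C_r^{(r)+}) = (\sqrt{-1})^{r_r} \prod_j [s_{rj}]$ for even $r$. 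On the invariant side, each of $\sigma_{rj}, \tau_{rj}$ negates $[s_{rj}]$, so $[s_{rj}]^2$ is $C_2^q$-invariant; the four-character decomposition of $\C[(z_{rj}')^{\pm 1}]$ under $C_2^q \cong \Z/2\Z \times \Z/2\Z$ then yields $(\Lambda^{(r)})^{(C_2^q)^{r_r}} = \C[[s_{rj}]^2 : j]$ for odd $r$, whereas for even $r$ exactly two characters survive restriction to the even subgroup, producing $(\Lambda^{(r)})^{(C_2^q)^{r_r}_{\mathrm{even}}} = \C[[s_{rj}]^2 : j] \oplus \prod_j [s_{rj}] \cdot \C[[s_{rj}]^2 : j]$. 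Taking further $S_{r_r}$-invariants, $(\Lambda^{(r)})^{W_r^q}$ is the ring of symmetric polynomials in the $[s_{rj}]^2$ for odd $r$, and this ring together with the (already $S_{r_r}$-fixed) generator $\prod_j [s_{rj}]$ for even $r$.

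Matching the two descriptions rests on the symmetry of the factorial elementary symmetric polynomials: $e_d(y \mid \mathbf{a})$ is symmetric in $y$ and has the form $e_d(y) + P_d(e_1(y), \ldots, e_{d-1}(y))$ for some polynomial $P_d$ depending only on $\mathbf{a}$, so an upper-triangular induction in $d$ shows $\C[e_d(y\mid\mathbf{a}) : 1 \leq d \leq r_r] = \C[e_d(y) : 1 \leq d \leq r_r]$, the full ring of symmetric polynomials in $y$. For odd $r$ this immediately gives $\varphi(\Gamma^{(r)}) = (\Lambda^{(r)})^{W_r^q}$. For even $r$, the choice $\epsilon = 0$ forces $a_1 = [0]^2 = 0$, which makes $e_{r_r}(y \mid \mathbf{a}) = \prod_j y_j = (\prod_j[s_{rj}])^2 = (-1)^{r_r}\varphi(C_r^{(r)+})^2$, so the omission of $C_r^{(r)}$ from $\Gamma^{(r)}$ is exactly compensated by the presence of $C_r^{(r)+}$, whose image matches the extra invariant generator $\prod_j [s_{rj}]$ precisely; hence $\varphi(\Gamma^{(r)}) = (\Lambda^{(r)})^{W_r^q}$ in this case too. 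Combining over $r$ yields $\varphi(\Gamma) = \Lambda^G$.

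Finally, $\Lambda$ is finitely generated as a $\Lambda^G$-module by a standard argument: each generator $z_{rj}^{\pm 1}$ of $\Lambda$ is a root of the monic polynomial $\prod_{g \in G}(t - g(z_{rj}^{\pm 1})) \in \Lambda^G[t]$, so $\Lambda$ is integral over $\Lambda^G$, and, being finitely generated as a $\C$-algebra by finitely many such integral elements, $\Lambda$ is a finitely generated $\Lambda^G$-module. I expect the main obstacle in carrying out this plan to lie in the even case $r = 2r_r$: pinning down precisely which characters of $(C_2^q)^{r_r}$ factor through the parity quotient, matching the invariant $\prod_j[s_{rj}]$ with $\varphi(C_r^{(r)+})$, and verifying via $a_1 = 0$ that this matching is consistent with the relation $(C_r^{(r)+})^2 = C_r^{(r)}$. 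The odd case and the upper-triangular identification of factorial elementary symmetric polynomials with their ordinary counterparts are more routine, and the needed density tools are already in place from Section \ref{prelim}.
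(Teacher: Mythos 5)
Your proposal is correct and follows essentially the same route as the paper: compute $\varphi(C_r^{(2d)})$ and $\varphi(C_r^{(r)+})$ via the Casimir eigenvalue formulas, use a triangular induction (Newton's theorem, in the paper's phrasing) to identify the factorial elementary symmetric polynomials with ordinary ones, and observe that $\varphi(C_r^{(r)+})$ supplies the extra generator $\prod_j[s_{rj}]$ needed in the even case. The only cosmetic difference is that you bookkeep the reverse inclusion $\Lambda^G\subseteq\varphi(\Gamma)$ via the character decomposition of $\Lambda^{(r)}$ under $(C_2^q)^{r_r}$ and the index-two even subgroup, whereas the paper splits an invariant $f$ by degree parity and argues directly about divisibility by $\prod_j(z_{nj}'-(z_{nj}')\inv)$; the two are equivalent, and your explicit integrality argument for the finite-generation corollary makes precise what the paper's ``in particular'' leaves implicit.
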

\begin{proof}
By the definition $\varphi(a)=\rho\inv(\pi(a))$ in the proof of Theorem \ref{embedding}, and by Corollary \ref{gamma_contained_lambda}, we see
\begin{equation*}
    \varphi(C_n^{(2d)})=(-1)^d\sum_{1\leq p_1<p_2<\dots<p_d\leq k} \; \prod_{r=1}^d \bigg(\bigg(\frac{z_{n,p_r}'-(z_{n,p_r}')\inv}{q-q\inv}\bigg)^2-a_{p_r-r+1}\bigg).
\end{equation*}Setting $b_{nj}:=(z_{nj}')^2+(z_{nj}')^{-2}$, we get
\begin{equation*}
    \varphi(C_n^{(2d)})=(-1)^d\sum_{1\leq p_1<p_2<\dots<p_d\leq k} \; \prod_{r=1}^d \bigg(\frac{b_{n,p_r}-2}{(q-q\inv)^2}-a_{p_r-r+1}\bigg).
\end{equation*}It is clear that each $b_{nj}$ is fixed under $\sigma_{nr}$ and $\tau_{nr}$ for any $r$. Also, $~^{\omega}b_{nj}=b_{n,\omega(j)}$ for all $\omega\in S_k$. It follows from \cite[Proposition~2.2]{capelli_id} that $\varphi(C_n^{(2d)})$ is symmetric in the $b_{nj}$ variables. (Note that their use of the letter $z$ differs from ours, but we obtain this result nonetheless.) When $n=2k$, $C_n^{(n)}$ is replaced by $C_n^{(n)+}$ as a generator. We have
\begin{equation*}
    \varphi(C_n^{(n)+})=(\sqrt{-1})^k\prod_{r=1}^k \frac{z_{nr}'-(z_{nr}')\inv}{q-q\inv}. 
\end{equation*}Applying $\sigma_{nr}$ or $\tau_{nr}$ negates this expression for any $r$. By the definition of $W_n^q$ when $n$ is even, only an even number of sign changes can occur under the action of $W_n^q$, and of course this expression is symmetric in the $(z_{nr}')^{\pm 1}$. Therefore $\varphi(\Gamma)\subseteq\Lambda^G$.

Before proving the reverse containment, we want to show that the elementary symmetric polynomial $e_d(b_{n1},b_{n2},...,b_{nk})\in\varphi(\Gamma^{(n)})$ for $1\leq d\leq k$. We prove this claim by strong induction on $d$. Obviously $e_1(b_{n1},b_{n2},...,b_{nk})\in \varphi(\Gamma^{(n)})$ by $\varphi(C_n^{(2)})$. Now suppose $e_i(b_{n1},b_{n2},...,b_{nk})\in\varphi(\Gamma^{(n)})$ for all $i<d$. Multiply $\varphi(C_n^{(2d)})$ by $(-1)^d(q-q\inv)^{2d}$ for convenience, thus we define
\begin{equation*}
    a:=(-1)^d(q-q\inv)^{2d}\cdot \varphi(C_n^{(2d)})=\sum_{1\leq p_1<p_2<\dots<p_d\leq k} \; \prod_{r=1}^d (b_{n,p_r}-2-(q-q\inv)^2a_{p_r-r+1})\in\varphi(\Gamma^{(n)}).
\end{equation*}Since $\varphi(C_n^{(2d)})$ is symmetric in the $b_{nj}$ variables, it follows that $a$ is symmetric in the $b_{nj}$ variables. Note that the sum of all degree $d$ terms in $a$ is $e_d(b_{n1},b_{n2},...,b_{nk})$. The sum of terms of smaller degree $a-e_d(b_{n1},b_{n2},...,b_{nk})$ is symmetric since invariant elements under the standard action of the symmetric group $S_k$ form a ring. By Newton's Theorem, $a-e_d(b_{n1},b_{n2},...,b_{nk})$ is generated by the elementary symmetric polynomials $e_i(b_{n1},b_{n2},...,b_{nk})$ for $i<d$. Then by the induction hypothesis, $a-e_d(b_{n1},b_{n2},...,b_{nk})\in\varphi(\Gamma^{(n)})$. Therefore
\begin{equation*}
    e_d(b_{n1},b_{n2},...,b_{nk})=a-(a-e_d(b_{n1},b_{n2},...,b_{nk}))\in\varphi(\Gamma^{(n)}).
\end{equation*}

Suppose $f\in(\Lambda^{(n)})^{W_n^q}$ is non-constant and $n=2k+1$. Since $f$ is fixed under the actions of all $\tau_{nj}$ for $1\leq j\leq k$, it follows that $f\in\C[(z_{nj}')^{\pm 2}\mid 1\leq j\leq k]$. Consider a monomial $(z_{n1}')^{c_1}(z_{n2}')^{c_2}\dots (z_{nk}')^{c_k}$ in $f$ where $c_j$ is an even integer for all $1\leq j\leq k$. Since $f$ is fixed under the action of every $\sigma_{nj}$, the new monomial obtained by changing the sign of any of the $c_j$ (including multiple sign changes at once) must have the same coefficient in $f$ as the original monomial. It follows that $f$ can be written as a polynomial in the $b_{nj}$ variables. Since $f$ is fixed under the action of the symmetric group $S_k$, it follows that $f$ is symmetric with respect to the $b_{nj}$ variables. By Newton's Theorem, $f$ is generated by elementary symmetric polynomials $e_d(b_{n1},b_{n2},...,b_{nk})$, which are in $\varphi(\Gamma^{(n)})$.

Now suppose $f\in(\Lambda^{(n)})^{W_n^q}$ is non-constant and $n=2k$. $f$ is fixed under the action of every $\tau_{nj_1}\tau_{nj_2}$ for $1\leq j_1,j_2\leq k$. Thus for any monomial $(z_{n1}')^{c_1}(z_{n2}')^{c_2}\dots (z_{nk}')^{c_k}$ in $f$, the $c_j$ must be all even integers or all odd integers since $c_{j_1}+c_{j_2}\in 2\Z$. Split $f=f_{\text{even}}+f_{\text{odd}}$ where $f_{\text{even}}$ consists of the monomials where the exponents are even and $f_{\text{odd}}$ consists of the monomials where the exponents are odd. Note that every monomial in $f_{\text{even}}$ is fixed under every $\tau_{nj}$ and $f$ is fixed under every $\sigma_{nj_1}\tau_{nj_2}$. Since applying any $\sigma_{nj}$ to $f_{\text{even}}$ yields a Laurent polynomial of even degree, $f_{\text{even}}$ must be fixed under every $\sigma_{nj_1}\tau_{nj_2}$, hence fixed under every $\sigma_{nj}$. Therefore, as in the previous paragraph, $f_{\text{even}}$ is generated by elementary symmetric polynomials $e_d(b_{n1},b_{n2},...,b_{nk})$, hence it is in $\varphi(\Gamma^{(n)})$. Now consider $f_{\text{odd}}$. Since $f$ is fixed under the action of every $\sigma_{nj}\tau_{nj}$ and applying any $\sigma_{nj}$ or $\tau_{nj}$ to $f_{\text{odd}}$ yields a Laurent polynomial of odd degree, $f_{\text{odd}}$ is fixed under every $\sigma_{nj}\tau_{nj}$. Then
\begin{align*}
    f_{\text{odd}}(z_{n1}',...,z_{nk}')&=\frac{1}{2}f_{\text{odd}}(z_{n1}',...,z_{nj}',...,z_{nk}')+\frac{1}{2}f_{\text{odd}}(z_{n1}',...,-(z_{nj}')\inv,...,z_{nk}')\\
    &=\frac{1}{2}f_{\text{odd}}(z_{n1}',...,z_{nj}',...,z_{nk}')-\frac{1}{2}f_{\text{odd}}(z_{n1}',...,(z_{nj}')\inv,...,z_{nk}'),
\end{align*}so $f_{\text{odd}}$ is divisible by $(z_{nj}'-1)(z_{nj}'+1)$, hence by $(z_{nj}')\inv(z_{nj}'-1)(z_{nj}'+1)=z_{nj}'-(z_{nj}')\inv$. This holds for all $1\leq j\leq k$, so $f_{\text{odd}}$ is divisible by $g:=\prod_{j=1}^k(z_{nj}'-(z_{nj}')\inv)$. In particular, $f_{\text{odd}}=g\cdot g_{\text{even}}$ where $g_{\text{even}}$ is a Laurent polynomial such that the degree of every variable in each monomial is even. Note that $g$ is in $\varphi(\Gamma^{(n)})$ by $\varphi(C_n^{(n)+})$. For all $h\in W_n^q$, 
\begin{equation*}
    g\cdot g_{\text{even}}=f_{\text{odd}}=~^hf_{\text{odd}}=~^hg\cdot ~^h g_{\text{even}}=g\cdot ~^h g_{\text{even}},
\end{equation*}so $g_{\text{even}}$ is fixed by $W_n^q$ since $\Lambda^{(n)}$ is an integral domain. Following the argument that showed that $f_{\text{even}}\in \varphi(\Gamma^{(n)})$, we see that $g_{\text{even}}\in\varphi(\Gamma^{(n)})$.
\end{proof}

In \cite{dfo} the notion of a Harish-Chandra subalgebra was defined (in a more general context). This notion has played an important role in the study of GT modules over $U(\mathfrak{gl}_n)$ and related algebras.
\begin{definition}[{\cite{dfo}}] \label{hc_subalg_def}
A commutative subalgebra $C\subseteq U$ is a \emph{Harish-Chandra subalgebra} if $C u C$ is finitely generated as a left and right $C$-module for all $u\in U$.
\end{definition}

The following lemma is well-known but we provide a proof for convenience.
\begin{lemma} \label{hc_subalg_smash_prod}
$\Lambda$ is a Harish-Chandra subalgebra of $L\#\m$.
\end{lemma}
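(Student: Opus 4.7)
The plan is to exploit three structural facts: that $\Lambda$ is a commutative Noetherian ring, that the group $\m$ preserves $\Lambda$ (not merely acts on $L$), and that $L\#\m=\bigoplus_{\mu\in\m}L\mu$ as a free left $L$-module.

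First I would fix $u\in L\#\m$ and write it in its canonical form $u=\sum_{\mu}f_\mu\mu$, with $f_\mu\in L$ and only finitely many nonzero. Next I would observe that the explicit formula \eqref{hopf_action} shows each generator $\delta_{ki}$ sends each Laurent generator $z_{lj}^{\pm 1}$ to a scalar multiple of itself, so $\,^\mu\Lambda=\Lambda$ for every $\mu\in\m$. Combining this with the skew commutation rule $\mu b=(\,^\mu b)\mu$, for any $a,b\in\Lambda$ I get
$$aub=\sum_{\mu}a\,f_\mu\,(\,^\mu b)\,\mu.$$
The $\mu$-component lies in $\Lambda f_\mu\Lambda\cdot\mu=f_\mu\Lambda\cdot\mu$, where the last equality uses commutativity of $L$; hence
$$\Lambda u\Lambda\;\subseteq\;\bigoplus_{\mu}f_\mu\Lambda\cdot\mu.$$

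The right-hand side is a finite sum of cyclic left $\Lambda$-submodules (generated by the $f_\mu\mu$), and also a finite sum of cyclic right $\Lambda$-submodules by the symmetric computation $(f_\mu\mu)\cdot\Lambda=f_\mu(\,^\mu\Lambda)\mu=f_\mu\Lambda\mu$. Finally, $\Lambda$ is a Laurent polynomial ring in finitely many variables and is therefore Noetherian, so every $\Lambda$-submodule of a finitely generated $\Lambda$-module is itself finitely generated; applying this on each side of the two displayed containments completes both halves of Definition \ref{hc_subalg_def}.

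I do not anticipate a serious obstacle here: the whole argument is bookkeeping once one records $\,^\m\Lambda=\Lambda$ and the Noetherianity of $\Lambda$. The one point worth a moment's care is that $\Lambda u\Lambda$ is only contained in, not equal to, $\bigoplus_\mu f_\mu\Lambda\mu$ in general, but this is precisely where Noetherianity is invoked to upgrade containment in a finitely generated module into finite generation of the submodule.
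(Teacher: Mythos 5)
Your argument is correct, but it takes a genuinely different route from the paper's. The paper invokes \cite[Proposition~8]{dfo} to reduce the quasi-centrality check to generators of $L\#\m$ as a ring, namely $L\cup\m$; once reduced, the verification is the one-line observation that $\Lambda\mu\Lambda=\Lambda\mu=\mu\Lambda$ for $\mu\in\m$ (using $\,^\m\Lambda=\Lambda$), and no Noetherianity is needed. You instead attack an arbitrary $u=\sum_\mu f_\mu\mu\in L\#\m$ directly: using $\,^\m\Lambda=\Lambda$ and commutativity of $L$ you sandwich $\Lambda u\Lambda$ inside the finitely generated $\Lambda$-bimodule $\bigoplus_\mu f_\mu\Lambda\mu$, and then upgrade containment to finite generation of the submodule via Noetherianity of $\Lambda$. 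Your approach is self-contained (it avoids the reduction lemma of Drozd--Futorny--Ovsienko as a black box), at the modest cost of needing the graded decomposition of $L\#\m$ and the Noetherian property of $\Lambda$. The paper's approach is shorter once the external lemma is granted, since the generator computation is essentially trivial. Both are sound; the one small hygiene point is that $f_\mu\Lambda\mu$ being a cyclic left $\Lambda$-module rests on $\Lambda f_\mu=f_\mu\Lambda$ in $L$, which you implicitly use and which is exactly where commutativity enters.
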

\begin{proof}
$L\cup\m$ is a set of generators of $L\#\m$ as a ring. By \cite[Proposition~8]{dfo}, it is enough to check $\Lambda x \Lambda$ is finitely generated as a left and right $\Lambda$-module for $x\in L\cup \m$. Since $L$ is commutative, it is enough to check for $\mu\in\m$. Clearly $\Lambda\mu\Lambda\subseteq\Lambda\mu$ and $\Lambda\mu\Lambda\subseteq\mu\Lambda$ by the action of $\m$ on $\Lambda$. The reverse inclusions are obvious.
\end{proof}

We are now ready to prove the second main theorem in this section.
\begin{theorem} \label{hc_subalg}
    $\varphi(\Gamma)$ is a Harish-Chandra subalgebra of $L\#\m$. Furthermore, $\Gamma$ is a Harish-Chandra subalgebra of $\uqqq$.
\end{theorem}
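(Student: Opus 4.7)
The plan is to deduce both statements from Theorem \ref{lambda_fg} and Lemma \ref{hc_subalg_smash_prod}, exploiting that $G$ is a finite group so that the Harish-Chandra property with respect to $\Lambda$ descends to the fixed subring $\varphi(\Gamma)=\Lambda^G$. The two key ingredients I will need are that $\Lambda$ is finitely generated as a $\varphi(\Gamma)$-module (the ``In particular'' clause of Theorem \ref{lambda_fg}) and that $\varphi(\Gamma)$ is a commutative Noetherian ring. Finiteness of $G=W_2^q\times\cdots\times W_n^q$ is immediate from the explicit description in the introduction, since each $W_i^q$ is a semidirect product of finite groups.

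For the first assertion, fix $u\in L\#\m$. Lemma \ref{hc_subalg_smash_prod} gives that $\Lambda u\Lambda$ is finitely generated as a left $\Lambda$-module; choosing left $\Lambda$-generators $y_1,\ldots,y_k$ of $\Lambda u\Lambda$ together with $\varphi(\Gamma)$-generators $z_1,\ldots,z_\ell$ of $\Lambda$, the products $\{z_j y_i\}$ generate $\Lambda u\Lambda$ as a left $\varphi(\Gamma)$-module, and symmetrically on the right. Now $\varphi(\Gamma)\,u\,\varphi(\Gamma)\subseteq\Lambda u\Lambda$ is a sub-bimodule over $\varphi(\Gamma)$, and Noetherianity of $\varphi(\Gamma)$ forces every submodule of a finitely generated module to be finitely generated. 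This gives the Harish-Chandra condition on each side.

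The second assertion is obtained by pulling back along the injection $\varphi$. Given $u\in\uqqq$, apply the first part to $\varphi(u)$ to obtain finitely many elements $y_1,\ldots,y_m$ generating $\varphi(\Gamma)\varphi(u)\varphi(\Gamma)=\varphi(\Gamma u\Gamma)$ as a left $\varphi(\Gamma)$-module. Each $y_i$ lifts uniquely to an $x_i\in\Gamma u\Gamma$ with $\varphi(x_i)=y_i$, and because $\varphi$ restricts to an algebra isomorphism $\Gamma\xrightarrow{\sim}\varphi(\Gamma)$ and is globally injective, the elements $\{x_1,\ldots,x_m\}$ generate $\Gamma u\Gamma$ as a left $\Gamma$-module; the right side is symmetric. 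The only step of the whole argument that is not purely formal is the Noetherianity of $\Lambda^G$, for which I would invoke the classical fact that for a finite group $G$ acting by $\C$-algebra automorphisms on a finitely generated commutative $\C$-algebra $\Lambda$, the invariant subring $\Lambda^G$ is again finitely generated as a $\C$-algebra (by Artin--Tate) and hence Noetherian (by Hilbert's basis theorem). Beyond quoting this standard result I do not anticipate any genuine obstacle.
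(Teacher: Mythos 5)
Your proof is correct and takes essentially the same route as the paper: combine Lemma \ref{hc_subalg_smash_prod} with the finiteness clause of Theorem \ref{lambda_fg} to trap $\varphi(\Gamma)u\varphi(\Gamma)$ in a finitely generated $\varphi(\Gamma)$-module, then invoke Noetherianity and pull back along the injection $\varphi$. The one place you differ is the justification of Noetherianity: you go through Artin--Tate (using finiteness of $G$ and $\Lambda^G=\varphi(\Gamma)$), whereas the paper takes the shorter road that $\Gamma$ is by definition generated by the finitely many Casimir elements $C_i^{(2d)}$, $C_i^{(i)+}$, so $\varphi(\Gamma)$ is a finitely generated $\C$-algebra and hence Noetherian outright; your argument is valid but heavier than necessary.
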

\begin{proof}
Let $\Gamma':=\varphi(\Gamma)$, and let $x\in L\#\m$. By Lemma \ref{hc_subalg_smash_prod}, there exist $x_1,x_2,...,x_r\in \Lambda x\Lambda$ such that $\Lambda x\Lambda=\Lambda x_1+\Lambda x_2+\dots+\Lambda x_r$. By Theorem \ref{lambda_fg}, there exist $f_1,f_2,...,f_s\in\Lambda$ such that $\Lambda=\Gamma'f_1+\Gamma'f_2+\dots+\Gamma'f_s$. Then
\begin{equation*}
    \Gamma'x\Gamma'\subseteq\Lambda x\Lambda \subseteq\Lambda x_1+\Lambda x_2+\dots+\Lambda x_r \subseteq\sum_{i=1}^s\sum_{j=1}^r \Gamma'f_i x_j.
\end{equation*}Since $\Gamma'$ is a finitely generated $\C$-algebra, it is a Noetherian ring. Therefore $\sum_{i=1}^s\sum_{j=1}^r \Gamma'f_i x_j$ is a Noetherian left $\Gamma'$-module, hence $\Gamma' x \Gamma'$ is finitely generated as a left $\Gamma'$-module. A similar argument shows that $\Gamma'x\Gamma'$ is finitely generated as a right $\Gamma'$-module. Since $\varphi$ is injective, $\Gamma\cong\Gamma'$ and $\uqqq$ is isomorphic to a subalgebra of $L\#\m$. Therefore $\Gamma$ is a Harish-Chandra subalgebra of $\uqqq$.
\end{proof}
\end{subsection}

\begin{subsection}{Classical case} \label{hc_classical}
We emphasize again that the associative algebra $\uqqq$ is defined for all $q\in\C\setminus\{0\}$, including $q=1$ when it becomes isomorphic to $\uson$. The Casimir elements of $\uqqq$ given in \cite[Theorem~1]{cas} are defined for all $q\in\C\setminus\{0\}$, including $q\to 1$, so in this subsection we denote the Casimir elements of $\uson$ by $C_n^{(2d)}$ and $C_n^{(n)+}$ as in the previous subsection. These elements act as scalar operators on finite-dimensional simple modules of $\uson$ with eigenvalues given in Theorem \ref{cas_eigvals}, except $q$-numbers $[a]$ are replaced by their limit $a$ as $q\to 1$. We see in Corollary \ref{existence_gen_n_qto1} that generic GT modules $V_{\alpha^0}^{m_n}$ are modules of $\uson$. We will argue in the proof of the following corollary of Theorem \ref{cas_eigvals} that the elements of $\uson$ act as scalar operators on modules $V_{\alpha^0}^{m_n}$, with analogous eigenvalues to those given in finite-dimensional simple modules. We begin with a definition.

\begin{definition}
Let $\Gamma_1$ be the subalgebra generated by the elements $C_i^{(2d)}$ for $2\leq i\leq n$ and $1\leq d \leq k:=\lfloor\frac{n}{2}\rfloor$, except we replace $C_i^{(i)}$ with $C_i^{(i)+}$ when $i$ is even. We define $C_2^{(2)+}=I_{21}$. This is the \emph{Gelfand-Tsetlin subalgebra} (GT subalgebra) of $\uson$. Let $\Gamma_1^{(n)}$ be the subalgebra of $\Gamma_1$ generated by $C_n^{(2d)}$ for $1\leq d\leq k$, except we replace $C_n^{(n)}$ with $C_n^{(n)+}$ when $n=2k$. Let $\Lambda_1^{(n)}$ be the subalgebra of $\Lambda_1$ (hence of $L_1\#\m$) generated by $x_{ni}$ for $1\leq i\leq k$.
\end{definition}

\begin{corollary} \label{gamma_contained_lambda_qto1}
The Casimir elements $C_n^{(2d)}$ (and $C_n^{(n)+}$ when $n$ is even) act as scalar operators on generic GT modules $V_{\alpha^0}^{m_n}$ of $\uson$, even when the modules are not simple. The eigenvalues of the generators are those given in Corollary \ref{gamma_contained_lambda}, except the $q$-numbers $[a]$ are replaced by their limit $a$ as $q\to 1$. Moreover, $\varphi_1(\Gamma_1^{(r)})\subseteq\Lambda_1^{(r)}$ for $2\leq r\leq n$, thus $\varphi_1(\Gamma_1)\subseteq\Lambda_1$, where $\varphi_1:\uson\hookrightarrow L_1\#\m$ is the injective algebra map from Theorem \ref{embedding_qto1}.
\end{corollary}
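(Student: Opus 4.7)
The proof plan is to mirror the argument used for Corollary~\ref{gamma_contained_lambda}, simply substituting the classical Zariski-density statement Lemma~\ref{zar}~\eqref{zar4_classic} for its quantum counterpart Lemma~\ref{zar}~\eqref{zar4}, and using the $q\to 1$ limit of Theorem~\ref{cas_eigvals}. The underlying reason everything goes through is that, by Corollary~\ref{existence_gen_n_qto1}, the action of $\uson$ on $V_{\alpha^0}^{m_n}$ is obtained from the rationalized formulas of Proposition~\ref{rat} by taking $q\to 1$, so the matrix entries of any fixed element of $\uson$ are rational functions in the entries $m_{ij}$ of the pattern $\alpha$.

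First I would fix a generator $C_n^{(2d)}$ (or $C_n^{(n)+}$ when $n$ is even) and a basis vector $\ket{\alpha}$ of $V_{\alpha^0}^{m_n}$, and write
\begin{equation*}
    C_n^{(2d)}.\ket{\alpha}=\sum_{\mathbf{a}\in S} f_{\alpha+\mathbf{a}}(m_{ij})\,\ket{\alpha+\mathbf{a}},
\end{equation*}
where $S\subset\Z^N$ is a finite subset containing $(0,\ldots,0)$ and the $f_{\alpha+\mathbf{a}}$ are rational functions in the entries $m_{ij}$ of $\alpha$ (including the top row). Because the action of $\uson$ on classical finite-dimensional simple modules is given by the same rationalized formulas evaluated at genuine GT pattern entries, the rational functions $f_{\alpha+\mathbf{a}}$ are independent of whether $\alpha$ is a generic pattern or a classical one; only the evaluation point differs.

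Next, by the $q\to 1$ case of Theorem~\ref{cas_eigvals}, each $C_n^{(2d)}$ acts as a scalar operator on every classical finite-dimensional simple module with eigenvalue $(-1)^d e_d(s_{n1}^2,\ldots,s_{nk}^2\mid \mathbf{a})$ where $\mathbf{a}=(\epsilon^2,(\epsilon+1)^2,\ldots)$, which depends only on the highest weight $m_n$. Hence for all $\mathbf{a}\neq(0,\ldots,0)$, the rational function $f_{\alpha+\mathbf{a}}$ vanishes at every collection of entries $\hat m_{ij}$ that form a classical GT pattern. Lemma~\ref{zar}~\eqref{zar4_classic} then forces $f_{\alpha+\mathbf{a}}\equiv 0$ for all such $\mathbf{a}$, so $C_n^{(2d)}$ acts diagonally on $V_{\alpha^0}^{m_n}$. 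The same density argument, applied to the eigenvalue $f_\alpha$, shows that $f_\alpha$ depends only on $m_{n1},\ldots,m_{nk}$ and is a polynomial in those variables. The argument for $C_n^{(n)+}$ when $n$ is even is identical, using the explicit product formula in Theorem~\ref{cas_eigvals}.

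For the containment $\varphi_1(\Gamma_1^{(r)})\subseteq \Lambda_1^{(r)}$, I would use the characterization $\varphi_1(a)=\rho_1^{-1}(\pi_1(a))$ established in the proof of Theorem~\ref{embedding_qto1}. Since $C_n^{(2d)}$ (and $C_n^{(n)+}$ when $n$ is even) acts on every basis vector $\ket{\alpha}$ of every $V_{\alpha^0}^{m_n}$ as multiplication by the polynomial expression in $m_{n1},\ldots,m_{nk}$ from Theorem~\ref{cas_eigvals} (with $[a]\to a$), the element of $\Lambda_1^S\#\m$ obtained by replacing each $m_{nj}$ by $x_{nj}$ lies in $\Lambda_1^{(n)}$ and represents the same operator on $V$. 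By injectivity of $\rho_1$, this element equals $\varphi_1(C_n^{(2d)})$. The analogous replacement works for $C_n^{(n)+}$. Iterating over $r=2,\ldots,n$ yields $\varphi_1(\Gamma_1^{(r)})\subseteq\Lambda_1^{(r)}$ and therefore $\varphi_1(\Gamma_1)\subseteq\Lambda_1$. No genuine obstacle is expected; the only care needed is to quote the correct classical-density lemma and to observe that the rational functions controlling the action of the Casimirs are literally the same in the generic and classical settings, a fact built into the construction of $V_{\alpha^0}^{m_n}$ via Theorem~\ref{existence_gen_n} and Corollary~\ref{existence_gen_n_qto1}.
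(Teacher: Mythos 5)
Your proposal matches the paper's proof essentially line for line: write $C_n^{(2d)}.\ket{\alpha}$ with rational matrix coefficients, observe the same rational functions govern the action on classical finite-dimensional modules, apply the $q\to 1$ form of Theorem~\ref{cas_eigvals} plus Lemma~\ref{zar}~\eqref{zar4_classic} to kill the off-diagonal coefficients and identify the eigenvalue as a polynomial in the top-row variables. The paper leaves the containment $\varphi_1(\Gamma_1^{(r)})\subseteq\Lambda_1^{(r)}$ implicit (deferring the explicit image to Theorem~\ref{lambda_fg_qto1}), whereas you spell it out via $\varphi_1=\rho_1^{-1}\circ\pi_1$ and injectivity of $\rho_1$, which is a harmless and correct elaboration of the same idea.
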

\begin{proof}
Consider the generator $C_n^{(2d)}$ and basis vector $\ket{\alpha}$ in $V_{\alpha^0}^{m_n}$. (We exclude $C_n^{(n)+}$ when $n$ is even from this proof since the argument is identical.) Recall that $\alpha^0\in \C^N$ for $N:=\sum_{i=2}^{n-1}\lfloor \frac{i}{2}\rfloor$. 
\begin{equation*}
    C_n^{(2d)}.\ket{\alpha}=\sum_{\textbf{a}\in S}f_{\alpha+\textbf{a}}({m_{ij}})\ket{\alpha+\textbf{a}}
\end{equation*}where $S$ is a finite subset of $\Z^N$ such that $(0,...,0)\in S$ and $f_{\alpha+\textbf{a}}({m_{ij}})$ denotes a rational function evaluated at entries $m_{ij}$ in the pattern $\alpha$ including the top row $m_n$. Note that since the formulas for the generators of $\uson$ on a basis vector in $V_{\alpha^0}^{m_n}$ are constructed to be identical to the rationalized formulas for finite-dimensional simple modules given in Corollary \ref{rat_gen_q}, we have the same underlying rational functions $f_{\alpha+\textbf{a}}=f_{\hat{\alpha}+\textbf{a}}$ describing the coefficients when $C_n^{(2d)}$ acts on a basis vector $\ket{\hat{\alpha}}$ in the finite-dimensional modules. Then by Theorem \ref{cas_eigvals} for $q\to 1$, for all $\textbf{a}\neq (0,...,0)$, $f_{\alpha+\textbf{a}}$ is zero when evaluated at any collection of entries $\hat{m}_{ij}$ defining a classical GT pattern. Therefore by Lemma \ref{zar} \eqref{zar4_classic}, these rational functions are identically zero, so $C_n^{(2d)}$ acts diagonally on $V_{\alpha^0}^{m_n}$ with respect to the basis given in the definition of $V_{\alpha^0}^{m_n}$. Also, since $f_{\alpha}=f_{\hat{\alpha}}$, we see by Theorem \ref{cas_eigvals} that $f_{\alpha}({m_{ij}})$ depends only on the entries $m_{nj}$ which characterize $V_{\alpha^0}^{m_n}$ and thus does not depend on the basis vector $\ket{\alpha}$. Thus $C_n^{(2d)}$ acts as a scalar operator on $V_{\alpha^0}^{m_n}$. It also follows from Theorem \ref{cas_eigvals} when $q\to 1$ that this eigenvalue is a polynomial in the ${m_{ij}}$ variables.
\end{proof}

Let $x_{nj}':=x_{nj}+k-j+\epsilon$. (Recall that $k=\lfloor\frac{n}{2}\rfloor$, $1\leq j \leq k$, $\epsilon=0$ when $n=2k$ and $\epsilon=\frac{1}{2}$ when $n=2k+1$.) Note that $\Lambda_1^{(n)}=\C[x_{nj}'\mid 1\leq j\leq k]$. Consider the automorphism $\sigma_{nj}$ of $\Lambda_1^{(n)}$ defined by
    \begin{equation*}
        \sigma_{nj}(x_{nj}')=-x_{nj}',
    \end{equation*}and every $x_{nr}'$ generator is fixed for $r\neq j$. Then the group $(\Z/2\Z)^k$ acts on $\Lambda_1^{(n)}$ by automorphisms via
    \begin{equation*}
        ~^{(a_j)_{j=1}^k}f=\bigg(\prod_{j=1}^k\sigma_{nj}^{a_j}\bigg)(f),
    \end{equation*}
    where we employ the convention that $a_j\in\{0,1\}$ for all $1\leq j \leq k$. The symmetric group $S_k$ acts on $\Lambda_1^{(n)}$ by permuting the indices of the $x_{nj}'$ variables. We also have an action of $S_k$ on $(\Z/2\Z)^k$ by permuting the indices of the $a_j$. Let $G$ be the group $W_2\times W_3\times \dots \times W_n$ where each $W_i$ is the Weyl group of the Lie algebra $\mathfrak{so}_i$. Explicitly,
    \begin{equation*}
        W_i:=\begin{cases}
        (\Z/2\Z)^k\rtimes S_k, & i=2k+1\\
        (\Z/2\Z)^{k}_{\text{even}}\rtimes S_{k}, & i=2k
        \end{cases}
    \end{equation*}and $(\Z/2\Z)^{k}_{\text{even}}$ is the kernel of the group homomorphism $h:(\Z/2\Z)^{k}\to\Z/2\Z$;
    \begin{equation*}
        (a_j)_{j=1}^k\mapsto\sum_{j=1}^k a_j.
    \end{equation*}Thus $W_i$ acts on $\Lambda_1^{(i)}$ by automorphisms for each $2\leq i\leq n$, which enables us to extend to an action of $G$ on $\Lambda_1$ by automorphisms. The following theorem describes the image of the GT subalgebra of $\uson$ under the injective algebra map $\varphi_1:\uson\hookrightarrow L_1\#\m$ from Theorem \ref{embedding_qto1}.
    
\begin{theorem} \label{lambda_fg_qto1}
    $\varphi_1(\Gamma_1)=\Lambda_1^G$. In particular, $\Lambda_1$ is finitely generated as a $\varphi_1(\Gamma_1)$-module.
\end{theorem}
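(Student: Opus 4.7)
The plan is to transpose the proof of Theorem \ref{lambda_fg} to the classical setting, taking advantage of two simplifications: the variables $x_{nj}'$ are polynomial rather than Laurent, and each factor $(\Z/2\Z)^2$ of the quantum group collapses to $\Z/2\Z$ (so the auxiliary automorphism $\tau_{nj}$ disappears). Since $\Lambda_1$ factors as $\bigotimes_{k=2}^{n} \Lambda_1^{(k)}$ with $G = W_2 \times \cdots \times W_n$ acting row by row, we have $\Lambda_1^G = \bigotimes_{k=2}^{n} (\Lambda_1^{(k)})^{W_k}$, and by Corollary~\ref{gamma_contained_lambda_qto1} each $\varphi_1(\Gamma_1^{(k)})$ lies in $\Lambda_1^{(k)}$. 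Since the generators of $\Gamma_1^{(k)}$ for different $k$ act on disjoint sets of variables and $\Gamma_1$ is commutative, $\varphi_1(\Gamma_1) = \bigotimes_k \varphi_1(\Gamma_1^{(k)})$, so it suffices to prove $\varphi_1(\Gamma_1^{(k)}) = (\Lambda_1^{(k)})^{W_k}$ for each $k$; I treat $k=n$, the others being structurally identical.

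For the forward inclusion, Corollary~\ref{gamma_contained_lambda_qto1} and the $q \to 1$ limit of Theorem~\ref{cas_eigvals} yield
\begin{equation*}
\varphi_1(C_n^{(2d)}) = (-1)^d \sum_{1 \leq p_1 < \cdots < p_d \leq k} \prod_{r=1}^{d} \bigl((x_{n,p_r}')^2 - a_{p_r-r+1}\bigr).
\end{equation*}
Writing $b_{nj} := (x_{nj}')^2$, this is a polynomial in the $b_{nj}$ which is symmetric by \cite[Proposition~2.2]{capelli_id}, hence invariant under every $\sigma_{nj}$ and under $S_k$. When $n=2k$, the extra generator $\varphi_1(C_n^{(n)+}) = (\sqrt{-1})^k \prod_r x_{nr}'$ is anti-invariant under any single $\sigma_{nj}$ but is fixed by the even-sign-change subgroup $(\Z/2\Z)^k_{\text{even}}$ and by $S_k$, which is precisely what is required.

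For the reverse inclusion I would first establish by strong induction on $d$ that $e_d(b_{n1},\ldots,b_{nk}) \in \varphi_1(\Gamma_1^{(n)})$: the degree-$d$ part of $(-1)^d \varphi_1(C_n^{(2d)})$ is $e_d(b_{n1},\ldots,b_{nk})$ itself, and the lower-degree remainder is symmetric in the $b_{nj}$, hence by Newton's theorem a polynomial in $e_1, \ldots, e_{d-1}$, which lies in $\varphi_1(\Gamma_1^{(n)})$ by induction. When $n=2k+1$, any $f \in (\Lambda_1^{(n)})^{W_n}$ has every exponent even by $\sigma_{nj}$-invariance, so $f \in \C[b_{n1},\ldots,b_{nk}]$ and $S_k$-symmetry expresses it in the $e_d$. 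When $n=2k$, I split $f = f_{\text{even}} + f_{\text{odd}}$ according to whether the monomial exponents are all even or all odd; invariance of $f$ under every double sign change $\sigma_{nj_1}\sigma_{nj_2}$ forces each surviving monomial to have its exponents of a common parity, and both pieces inherit $W_n$-invariance because $W_n$ preserves this parity grading. The even part reduces to the previous case, while $f_{\text{odd}} = \bigl(\prod_j x_{nj}'\bigr) g$ with $g$ having all even exponents; because $n = 2k$ admits only an even number of sign changes, $\prod_j x_{nj}'$ is itself $W_n$-invariant, so $g$ inherits $W_n$-invariance by cancellation in the integral domain $\Lambda_1^{(n)}$, and $\prod_j x_{nj}' \in \varphi_1(\Gamma_1^{(n)})$ via $\varphi_1(C_n^{(n)+})$.

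Finite generation of $\Lambda_1$ as a $\varphi_1(\Gamma_1)$-module is the classical fact that a polynomial ring is module-finite over its invariant subring under any finite group. The main obstacle I anticipate is the bookkeeping in the $n=2k$ case, in particular justifying that the parity decomposition $f = f_{\text{even}} + f_{\text{odd}}$ produces two individually $W_n$-invariant summands, and that $\prod_j x_{nj}'$ is $W_n$-invariant — both of which hinge on the restriction to even sign changes built into the definition of $(\Z/2\Z)^k_{\text{even}}$. These are precisely the considerations addressed in the quantum proof of Theorem~\ref{lambda_fg}, and the arguments there can be imported with only cosmetic changes (replacing $(z_{nj}')^2 + (z_{nj}')^{-2}$ by $(x_{nj}')^2$ and dropping the $\tau_{nj}$ factors).
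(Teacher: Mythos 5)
Your proposal is correct and follows essentially the same route as the paper: the same forward inclusion via \cite[Proposition~2.2]{capelli_id} and the even-sign-change invariance of $\prod_r x_{nr}'$, the same strong induction on $d$ to capture the $e_d((x_{n1}')^2,\ldots,(x_{nk}')^2)$, and the same parity split $f = f_{\text{even}} + f_{\text{odd}}$ for $n=2k$ followed by cancellation in the integral domain $\Lambda_1^{(n)}$. One minor point where you are actually cleaner than the paper: you observe that in $f_{\text{odd}}$ every exponent is odd, hence $\geq 1$, so divisibility of $f_{\text{odd}}$ by $\prod_j x_{nj}'$ is immediate; the paper instead imports the quantum-case detour of evaluating at $x_{nj}'=0$ after noting $\sigma_{nj}(f_{\text{odd}})=-f_{\text{odd}}$, a step that was necessary in the Laurent setting but is superfluous here.
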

\begin{proof}
By the definition $\varphi_1(a)=\rho_1\inv(\pi_1(a))$ in the proof of Theorem \ref{embedding_qto1}, and by Corollary \ref{gamma_contained_lambda_qto1}, we see
\begin{equation*}
    \varphi(C_n^{(2d)})=(-1)^d\sum_{1\leq p_1<p_2<\dots<p_d\leq k} \; \prod_{r=1}^d \big((x_{n,p_r}')^2-a_{p_r-r+1}\big).
\end{equation*}It is clear that each $(x_{nj}')^2$ is fixed under $\sigma_{nr}$ for any $r$. Also, $~^{\omega}(x_{nj}')^2=(x_{n,\omega(j)}')^2$ for all $\omega\in S_k$. It follows from \cite[Proposition~2.2]{capelli_id} that $\varphi(C_n^{(2d)})$ is symmetric in the $(x_{nj}')^2$ variables. When $n=2k$, $C_n^{(n)}$ is replaced by $C_n^{(n)+}$ as a generator. We have
\begin{equation*}
    \varphi(C_n^{(n)+})=(\sqrt{-1})^k\prod_{r=1}^k x_{nr}'. 
\end{equation*}Applying $\sigma_{nr}$ negates this expression for any $r$. By the definition of $W_n$ when $n$ is even, only an even number of sign changes can occur under the action of $W_n$, and of course this expression is symmetric in the $x_{nr}'$. Therefore $\varphi_1(\Gamma_1)\subseteq\Lambda_1^G$.

Before proving the reverse containment, we want to show that the elementary symmetric polynomial $e_d((x_{n1}')^2,(x_{n2}')^2,...,(x_{nk}')^2)\in\varphi_1(\Gamma_1^{(n)})$ for $1\leq d\leq k$. Replacing $(x_{nj}')^2$ with $b_{nj}$ from the proof of Theorem \ref{lambda_fg}, the argument here is analogous to the argument given in that proof.

Suppose $f\in(\Lambda_1^{(n)})^{W_n}$ is non-constant and $n=2k+1$. Since $f$ is fixed under the actions of all $\sigma_{nj}$ for $1\leq j\leq k$, it follows that $f\in\C[(x_{nj}')^{2}\mid 1\leq j\leq k]$. Consider a monomial $(x_{n1}')^{c_1}(x_{n2}')^{c_2}\dots (x_{nk}')^{c_k}$ in $f$ where $c_j$ is an even integer for all $1\leq j\leq k$. Since $f$ is fixed under the action of the symmetric group $S_k$, it follows that $f$ is symmetric with respect to the $(x_{nj}')^2$ variables. By Newton's Theorem, $f$ is generated by elementary symmetric polynomials $e_d((x_{n1}')^2,(x_{n2}')^2,...,(x_{nk}')^2)$, which are in $\varphi_1(\Gamma_1^{(n)})$.

Now suppose $f\in(\Lambda_1^{(n)})^{W_n}$ is non-constant and $n=2k$. $f$ is fixed under the action of every $\sigma_{nj_1}\sigma_{nj_2}$ for $1\leq j_1,j_2\leq k$. Thus for any monomial $(x_{n1}')^{c_1}(x_{n2}')^{c_2}\dots (x_{nk}')^{c_k}$ in $f$, the $c_j$ must be all even integers or all odd integers since $c_{j_1}+c_{j_2}\in 2\Z$. Split $f=f_{\text{even}}+f_{\text{odd}}$ where $f_{\text{even}}$ consists of the monomials where the exponents are even and $f_{\text{odd}}$ consists of the monomials where the exponents are odd. As in the previous paragraph, $f_{\text{even}}$ is generated by elementary symmetric polynomials $e_d((x_{n1}')^2,(x_{n2}')^2,...,(x_{nk}')^2)$, hence it is in $\varphi_1(\Gamma_1^{(n)})$. Now consider $f_{\text{odd}}$. Note that applying any $\sigma_{nj}$ to $f_{\text{odd}}$ yields $-f_{\text{odd}}$, so
\begin{equation*}
    f_{\text{odd}}(x_{n1}',...,x_{nk}')=\frac{1}{2}f_{\text{odd}}(x_{n1}',...,x_{nj}',...,x_{nk}')-\frac{1}{2}f_{\text{odd}}(x_{n1}',...,-x_{nj}',...,x_{nk}').
\end{equation*}It follows that $f_{\text{odd}}$ is zero when evaluated at $x_{nj}'=0$, so it is divisible by $x_{nj}'$. This holds for all $1\leq j\leq k$, so $f_{\text{odd}}$ is divisible by $g:=\prod_{j=1}^k x_{nj}'$. In particular, $f_{\text{odd}}=g\cdot g_{\text{even}}$ where $g_{\text{even}}$ is a polynomial such that the degree of every variable in each monomial is even. Note that $g$ is in $\varphi_1(\Gamma_1^{(n)})$ by $\varphi(C_n^{(n)+})$. For all $h\in W_n$, 
\begin{equation*}
    g\cdot g_{\text{even}}=f_{\text{odd}}=~^hf_{\text{odd}}=~^hg\cdot ~^h g_{\text{even}}=g\cdot ~^h g_{\text{even}},
\end{equation*}so $g_{\text{even}}$ is fixed by $W_n$ since $\Lambda_1^{(n)}$ is an integral domain. Following the argument that showed that $f_{\text{even}}\in \varphi_1(\Gamma_1^{(n)})$, we see that $g_{\text{even}}\in\varphi_1(\Gamma_1^{(n)})$.
\end{proof}

Recall the definition of a Harish-Chandra subalgebra in Definition \ref{hc_subalg_def} \cite{dfo}. The following lemma is proved by the same argument as in Lemma \ref{hc_subalg_smash_prod}.
\begin{lemma} \label{hc_subalg_smash_prod_qto1}
$\Lambda_1$ is a Harish-Chandra subalgebra of $L_1\#\m$.
\end{lemma}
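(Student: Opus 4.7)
The strategy mirrors the proof of Lemma \ref{hc_subalg_smash_prod} for the quantum case verbatim; the only change is replacing the Laurent polynomial algebra $\Lambda$ and its field of fractions $L$ with the polynomial algebra $\Lambda_1$ and its field of fractions $L_1$, and noting that $\m$ acts by automorphisms on $\Lambda_1$ via integer shifts of the $x_{ki}$ rather than by $q$-shifts of the $z_{ki}^{\pm 1}$. I will therefore plan the argument along these same lines.

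First I would observe that $L_1 \cup \m$ is a generating set for $L_1 \# \m$ as a ring. By \cite[Proposition~8]{dfo}, to verify that $\Lambda_1$ is a Harish-Chandra subalgebra of $L_1 \# \m$, it suffices to show that $\Lambda_1 x \Lambda_1$ is finitely generated as both a left and a right $\Lambda_1$-module for every $x$ lying in this generating set.

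Next I would dispose of the two types of generators separately. For $x \in L_1$, commutativity of $L_1$ gives $\Lambda_1 x \Lambda_1 = \Lambda_1 x = x \Lambda_1$, which is a cyclic $\Lambda_1$-module from either side and hence finitely generated. For $\mu \in \m$, the action of $\m$ on $\Lambda_1$ stabilizes $\Lambda_1$ (the shift $~^\mu x_{lj} = x_{lj} + c$ for some $c \in \Z$ still lies in $\Lambda_1$), so for every $f \in \Lambda_1$ we have $f \mu = \mu \cdot (~^{\mu^{-1}} f)$ and $\mu f = (~^\mu f) \cdot \mu$. This yields the two inclusions $\Lambda_1 \mu \Lambda_1 \subseteq \Lambda_1 \mu$ and $\Lambda_1 \mu \Lambda_1 \subseteq \mu \Lambda_1$, the reverse inclusions being immediate from $1 \in \Lambda_1$. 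Thus $\Lambda_1 \mu \Lambda_1$ is cyclic (hence finitely generated) on each side, completing the verification.

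There is no genuine obstacle here: the argument is a direct translation of the quantum case, and the only fact to keep in mind is that the $\m$-action preserves $\Lambda_1$ (which it manifestly does, since integer translations of the polynomial generators $x_{ki}$ remain polynomial). The lemma then follows at once from \cite[Proposition~8]{dfo}.
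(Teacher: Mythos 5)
Your proof is correct and follows the paper's argument essentially verbatim: generate $L_1\#\m$ by $L_1\cup\m$, invoke \cite[Proposition~8]{dfo}, dispose of $L_1$ by commutativity, and observe that the $\m$-action stabilizes $\Lambda_1$ so that $\Lambda_1\mu\Lambda_1=\Lambda_1\mu=\mu\Lambda_1$. The only difference is that you spell out the stabilization step a little more explicitly than the paper does.
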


The following is proved by the same argument as in Theorem \ref{hc_subalg}.
\begin{theorem} \label{hc_subalg_qto1}
    $\varphi_1(\Gamma_1)$ is a Harish-Chandra subalgebra of $L_1\#\m$. Furthermore, $\Gamma_1$ is a Harish-Chandra subalgebra of $\uson$.
\end{theorem}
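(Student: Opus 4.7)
The plan is to mirror the argument for Theorem \ref{hc_subalg} verbatim in the classical setting, using the two preparatory results that have just been established: Lemma \ref{hc_subalg_smash_prod_qto1} (which says $\Lambda_1$ is Harish-Chandra in $L_1\#\m$) and Theorem \ref{lambda_fg_qto1} (which says $\varphi_1(\Gamma_1)=\Lambda_1^G$ and $\Lambda_1$ is finitely generated as a $\varphi_1(\Gamma_1)$-module). The only structural ingredient not carried over from the quantum setting is the Noetherianity of $\varphi_1(\Gamma_1)$, and this follows because $\Gamma_1$ is generated by the finitely many Casimir elements $C_i^{(2d)}$ (with $C_i^{(i)+}$ for even $i$), so $\varphi_1(\Gamma_1)$ is a finitely generated commutative $\C$-algebra and hence Noetherian by Hilbert's basis theorem.

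First I would set $\Gamma_1':=\varphi_1(\Gamma_1)$ and fix an arbitrary $x\in L_1\#\m$. By Lemma \ref{hc_subalg_smash_prod_qto1}, $\Lambda_1 x\Lambda_1$ is finitely generated as a left $\Lambda_1$-module, so I may write $\Lambda_1 x\Lambda_1=\Lambda_1 x_1+\cdots+\Lambda_1 x_r$ for some $x_1,\ldots,x_r\in\Lambda_1 x\Lambda_1$. Next, by Theorem \ref{lambda_fg_qto1}, I may write $\Lambda_1=\Gamma_1'f_1+\cdots+\Gamma_1'f_s$ for some $f_1,\ldots,f_s\in\Lambda_1$. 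Combining these gives the inclusion
\begin{equation*}
    \Gamma_1'x\Gamma_1'\subseteq\Lambda_1 x\Lambda_1\subseteq\sum_{i=1}^s\sum_{j=1}^r\Gamma_1'f_ix_j,
\end{equation*}
and since $\Gamma_1'$ is Noetherian, the finitely generated left $\Gamma_1'$-module on the right is Noetherian, so its submodule $\Gamma_1'x\Gamma_1'$ is finitely generated as a left $\Gamma_1'$-module. The symmetric argument handles the right module structure, proving that $\varphi_1(\Gamma_1)$ is Harish-Chandra in $L_1\#\m$.

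Finally, to transfer this property back to $\uson$, I use the injectivity of $\varphi_1$ from Theorem \ref{embedding_qto1}: it identifies $\uson$ with a subalgebra of $L_1\#\m$ and $\Gamma_1$ with $\Gamma_1'$. For $u\in\uson$, the set $\Gamma_1 u\Gamma_1$ is mapped isomorphically onto $\Gamma_1'\varphi_1(u)\Gamma_1'$, which is finitely generated as a left and right $\Gamma_1'$-module, so pulling back through $\varphi_1\inv$ yields the same property for $\Gamma_1 u\Gamma_1$ over $\Gamma_1$. There is no real obstacle here; the entire argument is a mechanical translation of the quantum proof, and the only thing to be careful about is that injectivity of $\varphi_1$ genuinely allows the pullback of the finite generation property, which is immediate since $\varphi_1$ restricted to $\Gamma_1$ gives a $\C$-algebra isomorphism $\Gamma_1\cong\Gamma_1'$.
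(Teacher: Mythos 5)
Your proposal matches the paper's proof essentially line for line: same decomposition $\Gamma_1'x\Gamma_1'\subseteq\Lambda_1 x\Lambda_1\subseteq\sum_{i,j}\Gamma_1'f_ix_j$, same appeal to Lemma \ref{hc_subalg_smash_prod_qto1} and Theorem \ref{lambda_fg_qto1}, same Noetherianity step, and same transfer via injectivity of $\varphi_1$. The only difference is that you spell out Hilbert's basis theorem explicitly, which the paper leaves implicit.
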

\end{subsection}
\end{section}

\begin{section}{Appendix} \label{appendix}
In this section, we prove Lemma \ref{rat_telesc_prod}, a technical result which is used to prove Proposition \ref{rat}. We state the lemma again for the convenience of the reader. Recall that $e_i$ is the row-vector with $1$ for the $i$-th entry and where every other entry is zero.

\newtheorem*{lemmaapp}{Lemma \ref{rat_telesc_prod}}
\begin{lemmaapp} 
\begin{enumerate}[(a)]
    \item \label{rat_telesc_prod_odd_app}The following is true for $n=2p+1$.
    \begin{enumerate}[{\rm (i)}]
        \item \label{rat_telesc_prod_odd_app_i}When $1\leq j \leq p$,
        \begin{multline*}
            \frac{\lambda_n\begin{pmatrix}
    m_n\\
    m_{n-1}
    \end{pmatrix}}{\lambda_n\begin{pmatrix}
    m_n\\
    m_{n-1}+e_j
    \end{pmatrix}}=\bigg(\prod_{s=1}^{j-1}\frac{[l_s+l_j'][l_s'-l_j']}{[l_s'+l_j'+1][l_s-l_j'-1]}\bigg)^{1/2}\\
    \cdot \bigg(\frac{[l_j'-p+j][l_j'+1]}{[2l_j'-2p+2j][2l_j'+2]}\frac{[l_j+l_j'][l_j-l_j'-1]\prod_{r=j+1}^p[l_j'+l_r][l_j'-l_r+1]}{\prod_{r>j}[l_j'+l_r'+1][l_j'-l_r'+1]}\bigg)^{1/2}.
        \end{multline*}
        \item \label{rat_telesc_prod_odd_app_ii}When $1\leq j<p$, 
        \begin{multline*}
            \frac{\lambda_{n-1}\begin{pmatrix}
            m_{n-1}\\
            m_{n-2}
            \end{pmatrix}}{\lambda_{n-1}\begin{pmatrix}
            m_{n-1}+e_j\\
            m_{n-2}
            \end{pmatrix}}=\bigg(\prod_{s=1}^{j-1}\frac{[l_s''+l_j'][l_s'-l_j'-1]}{[l_s'+l_j'][l_s''-l_j'-1]}\bigg)^{1/2}\bigg(\frac{[l_j'+l_j'']}{[2l_j'][l_j'-l_j''+1]}\bigg)^{1/2}\\
    \cdot \bigg(\frac{[2l_j'-2p+2j][l_j']}{[l_j'-p+j]}\frac{\prod_{r>j}[l_j'+l_r''][l_j'-l_r''+1]}{\prod_{r=j+1}^p[l_j'+l_r'][l_j'-l_r']}\bigg)^{1/2}.
        \end{multline*}
        \item \label{rat_telesc_prod_odd_app_iii}When $j=p$, 
        \begin{equation*}
            \frac{\lambda_{n-1}\begin{pmatrix}
            m_{n-1}\\
            m_{n-2}
            \end{pmatrix}}{\lambda_{n-1}\begin{pmatrix}
            m_{n-1}+e_j\\
            m_{n-2}
            \end{pmatrix}}=\bigg(\prod_{s=1}^{p-1}\frac{[l_s''+l_p'][l_s'-l_p'-1]}{[l_s'+l_p'][l_s''-l_p'-1]}\bigg)^{1/2}.
        \end{equation*}
    \end{enumerate}
    \item \label{rat_telesc_prod_even_app}The following is true for $n=2p$.
    \begin{enumerate}[{\rm (i)}]
        \item \label{rat_telesc_prod_even_app_i}When $1\leq j\leq p-1$,
        \begin{multline*}
            \frac{\lambda_n\begin{pmatrix}
    m_n\\
    m_{n-1}
    \end{pmatrix}}{\lambda_n\begin{pmatrix}
    m_n\\
    m_{n-1}+e_j
    \end{pmatrix}}=
    \bigg(\prod_{s=1}^{j-1}\frac{[l_s'+l_j''][l_s''-l_j'']}{[l_s''+l_j''][l_s'-l_j'']}\bigg)^{1/2}\\
    \cdot \bigg(\frac{[l_j''-p+j]}{[2l_j''-2p+2j]}\frac{[l_j'+l_j''][l_j'-l_j'']\prod_{r=j+1}^{p}[l_j''+l_r'][l_j''-l_r']}{\prod_{r>j}[l_j''+l_r''][l_j''-l_r''+1]}\frac{1}{[l_j''][2l_j''+1]}\bigg)^{1/2}.
        \end{multline*}
        \item \label{rat_telesc_prod_even_app_ii}When $1\leq j\leq p-1$, 
        \begin{multline*}
            \frac{\lambda_{n-1}\begin{pmatrix}
            m_{n-1}\\
            m_{n-2}
            \end{pmatrix}}{\lambda_{n-1}\begin{pmatrix}
            m_{n-1}+e_j\\
            m_{n-2}
            \end{pmatrix}}=\bigg(\prod_{s=1}^{j-1}\frac{[l_s'''+l_j''][l_s''-l_j''-1]}{[l_s''+l_j''-1][l_s'''-l_j'']}\bigg)^{1/2}\bigg(\frac{[l_j''+l_j''']}{[2l_j''-1][l_j''-l_j''']}\bigg)^{1/2}\\
    \cdot \bigg(\frac{[2l_j''-2p+2j]}{[l_j''-p+j][l_j'']}\frac{\prod_{r>j}[l_j''+l_r'''][l_j''-l_r''']}{\prod_{r=j+1}^{p-1}[l_j''+l_r''-1][l_j''-l_r'']}\bigg)^{1/2}.
        \end{multline*}
    \end{enumerate}
\end{enumerate}
\end{lemmaapp}

\begin{proof}[Proof of Lemma \ref{rat_telesc_prod}]
By the definition of our recursion for the re-scaling, setting $p_{n-1}:=\lfloor\frac{n-1}{2}\rfloor$ we have
\begin{equation} \label{rescale_lambda_n}
    \lambda_n\begin{pmatrix}
    m_n\\
    m_{n-1}
    \end{pmatrix}=\prod_{s=1}^{p_{n-1}}\prod_{k=1}^{m_{ns}-m_{n-1,s}}\frac{\lambda_n\begin{pmatrix}
    m_n\\
    m_{n-1}+\sum_{i=1}^{s-1}(m_{ni}-m_{n-1,i})e_i+(k-1)e_s
    \end{pmatrix}}{\lambda_n\begin{pmatrix}
    m_n\\
    m_{n-1}+\sum_{i=1}^{s-1}(m_{ni}-m_{n-1,i})e_i+k\cdot e_s
    \end{pmatrix}}
\end{equation}since it is a telescoping product and  
\begin{equation*}
\lambda_n\begin{pmatrix}
    m_n\\
    m_{n-1}+\sum_{i=1}^{p_{n-1}}(m_{ni}-m_{n-1,i})e_i
    \end{pmatrix}:=1.
    \end{equation*}

\noindent Then for items (a)(i) and (b)(i), 
\begin{multline*}
    \frac{\lambda_n\begin{pmatrix}
    m_n\\
    m_{n-1}
    \end{pmatrix}}{\lambda_n\begin{pmatrix}
    m_n\\
    m_{n-1}+e_j
    \end{pmatrix}}=\prod_{s=1}^{j-1}\Bigg\{\prod_{k=1}^{m_{ns}-m_{n-1,s}}\frac{\lambda_n\begin{pmatrix}
    m_n\\
    m_{n-1}+\sum_{i=1}^{s-1}(m_{ni}-m_{n-1,i})e_i+(k-1)e_s
    \end{pmatrix}}{\lambda_n\begin{pmatrix}
    m_n\\
    m_{n-1}+\sum_{i=1}^{s-1}(m_{ni}-m_{n-1,i})e_i+k\cdot e_s
    \end{pmatrix}}\\
    \cdot \Bigg(\frac{\lambda_n\begin{pmatrix}
    m_n\\
    m_{n-1}+\sum_{i=1}^{s-1}(m_{ni}-m_{n-1,i})e_i+(k-1)e_s+e_j
    \end{pmatrix}}{\lambda_n\begin{pmatrix}
    m_n\\
    m_{n-1}+\sum_{i=1}^{s-1}(m_{ni}-m_{n-1,i})e_i+k\cdot e_s+e_j
    \end{pmatrix}}\Bigg)^{-1}\Bigg\} \\
    \cdot \frac{\lambda_n\begin{pmatrix}
    m_n\\
    m_{n-1}+\sum_{i=1}^{j-1}(m_{ni}-m_{n-1,i})e_i
    \end{pmatrix}}{\lambda_n\begin{pmatrix}
    m_n\\
    m_{n-1}+\sum_{i=1}^{j-1}(m_{ni}-m_{n-1,i})e_i+e_j
    \end{pmatrix}}.
\end{multline*}The middle line of this equation comes from the denominator, and the reason we do not see more terms for greater values of $s$ is that they are cancelling from the quotient. Then for item (a)(i) we have
\begin{multline*}
    \frac{\lambda_n\begin{pmatrix}
    m_n\\
    m_{n-1}
    \end{pmatrix}}{\lambda_n\begin{pmatrix}
    m_n\\
    m_{n-1}+e_j
    \end{pmatrix}}= \bigg(\prod_{s=1}^{j-1}\prod_{k=1}^{l_s-l_s'-1}\frac{[(l_s'+k-1)+(l_j'+1)+1][(l_s'+k-1)-(l_j'+1)+1]}{[(l_s'+k-1)+l_j'+1][(l_s'+k-1)-l_j'+1]}\bigg)^{1/2}\\
    \cdot \frac{\lambda_n\begin{pmatrix}
    m_n\\
    m_{n-1}+\sum_{i=1}^{j-1}(m_{ni}-m_{n-1,i})e_i
    \end{pmatrix}}{\lambda_n\begin{pmatrix}
    m_n\\
    m_{n-1}+\sum_{i=1}^{j-1}(m_{ni}-m_{n-1,i})e_i+e_j
    \end{pmatrix}}
\end{multline*}
\begin{multline*}
    =\bigg(\prod_{s=1}^{j-1}\frac{[l_s+l_j'][l_s'-l_j']}{[l_s'+l_j'+1][l_s-l_j'-1]}\bigg)^{1/2}\\
    \cdot \bigg(\frac{[l_j'-p+j][l_j'+1]}{[2l_j'-2p+2j][2l_j'+2]}\frac{[l_j+l_j'][l_j-l_j'-1]\prod_{r=j+1}^p[l_j'+l_r][l_j'-l_r+1]}{\prod_{r>j}[l_j'+l_r'+1][l_j'-l_r'+1]}\bigg)^{1/2},
\end{multline*}and for item (b)(i) we have
\begin{multline*}
    \frac{\lambda_n\begin{pmatrix}
    m_n\\
    m_{n-1}
    \end{pmatrix}}{\lambda_n\begin{pmatrix}
    m_n\\
    m_{n-1}+e_j
    \end{pmatrix}}=\bigg(\prod_{s=1}^{j-1}\prod_{k=1}^{l_s'-l_s''}\frac{[(l_s''+k-1)+(l_j''+1)][(l_s''+k-1)-(l_j''+1)+1]}{[(l_s''+k-1)+l_j''][(l_s''+k-1)-l_j''+1]}\bigg)^{1/2}\\
    \cdot \frac{\lambda_n\begin{pmatrix}
    m_n\\
    m_{n-1}+\sum_{i=1}^{j-1}(m_{ni}-m_{n-1,i})e_i
    \end{pmatrix}}{\lambda_n\begin{pmatrix}
    m_n\\
    m_{n-1}+\sum_{i=1}^{j-1}(m_{ni}-m_{n-1,i})e_i+e_j
    \end{pmatrix}}
    \end{multline*}
    \begin{multline*}
    =\bigg(\prod_{s=1}^{j-1}\frac{[l_s'+l_j''][l_s''-l_j'']}{[l_s''+l_j''][l_s'-l_j'']}\bigg)^{1/2}\\
    \cdot \bigg(\frac{[l_j''-p+j]}{[2l_j''-2p+2j]}\frac{[l_j'+l_j''][l_j'-l_j'']\prod_{r=j+1}^{p}[l_j''+l_r'][l_j''-l_r']}{\prod_{r>j}[l_j''+l_r''][l_j''-l_r''+1]}\frac{1}{[l_j''][2l_j''+1]}\bigg)^{1/2}.
\end{multline*}

\noindent Now for item (a)(ii) we have
\begin{multline*}
    \frac{\lambda_{n-1}\begin{pmatrix}
    m_{n-1}\\
    m_{n-2}
    \end{pmatrix}}{\lambda_{n-1}\begin{pmatrix}
    m_{n-1}+e_j\\
    m_{n-2}
    \end{pmatrix}}=\prod_{s=1}^j\Bigg\{\prod_{k=1}^{m_{n-1,s}-m_{n-2,s}}\frac{\lambda_{n-1}\begin{pmatrix}
    m_{n-1}\\
    m_{n-2}+\sum_{i=1}^{s-1}(m_{n-1,i}-m_{n-2,i})e_i+(k-1)e_s
    \end{pmatrix}}{\lambda_{n-1}\begin{pmatrix}
    m_{n-1}\\
    m_{n-2}+\sum_{i=1}^{s-1}(m_{n-1,i}-m_{n-2,i})e_i+k\cdot e_s
    \end{pmatrix}}\\
    \cdot \Bigg(\frac{\lambda_{n-1}\begin{pmatrix}
    m_{n-1}+e_j\\
    m_{n-2}+\sum_{i=1}^{s-1}(m_{n-1,i}-m_{n-2,i})e_i+(k-1)e_s
    \end{pmatrix}}{\lambda_{n-1}\begin{pmatrix}
    m_{n-1}+e_j\\
    m_{n-2}+\sum_{i=1}^{s-1}(m_{n-1,i}-m_{n-2,i})e_i+k\cdot e_s
    \end{pmatrix}}\Bigg)^{-1}\Bigg\}\\
    \cdot \Bigg(\frac{\lambda_{n-1}\begin{pmatrix}
    m_{n-1}+e_j\\
    m_{n-2}+\sum_{i=1}^{j}(m_{n-1,i}-m_{n-2,i})e_i
    \end{pmatrix}}{\lambda_{n-1}\begin{pmatrix}
    m_{n-1}+e_j\\
    m_{n-2}+\sum_{i=1}^{j}(m_{n-1,i}-m_{n-2,i})e_i+e_j
    \end{pmatrix}}\Bigg)^{-1}\\
    \cdot \prod_{s=j+1}^{p-1}\Bigg\{\prod_{k=1}^{m_{n-1,s}-m_{n-2,s}}\frac{\lambda_{n-1}\begin{pmatrix}
    m_{n-1}\\
    m_{n-2}+\sum_{i=1}^{s-1}(m_{n-1,i}-m_{n-2,i})e_i+(k-1)e_s
    \end{pmatrix}}{\lambda_{n-1}\begin{pmatrix}
    m_{n-1}\\
    m_{n-2}+\sum_{i=1}^{s-1}(m_{n-1,i}-m_{n-2,i})e_i+k\cdot e_s
    \end{pmatrix}}\\
    \cdot \Bigg(\frac{\lambda_{n-1}\begin{pmatrix}
    m_{n-1}+e_j\\
    m_{n-2}+\sum_{i=1}^{s-1}(m_{n-1,i}-m_{n-2,i})e_i+e_j+(k-1)e_s
    \end{pmatrix}}{\lambda_{n-1}\begin{pmatrix}
    m_{n-1}+e_j\\
    m_{n-2}+\sum_{i=1}^{s-1}(m_{n-1,i}-m_{n-2,i})e_i+e_j+k\cdot e_s
    \end{pmatrix}}\Bigg)^{-1}\Bigg\}.
\end{multline*}This follows from \eqref{rat_recursion_even}. The terms with the $-1$ superscript are from the denominator. Note that the double product at the bottom consists of terms which do not depend on entries from the first $j$ columns in the GT pattern, so this double product is equal to 1 by cancellation. For item (a)(ii), the third line in the equation is the reciprocal of
\begin{equation*}
    \bigg(\frac{[l_j'-p+j]}{[2l_j'-2p+2j]}\frac{[2l_j'+1][1]}{[l_j'][2l_j'+1]}\frac{\prod_{r=j+1}^p[l_j'+l_r'][l_j'-l_r']}{\prod_{r>j}[l_j'+l_r''][l_j'-l_r''+1]}\bigg)^{1/2}.
\end{equation*}This follows from \eqref{rat_recursion_even} but we replace $l_{j}'$ with $l_{j}'+1$ and $l_{j}''$ with $l_{j}'$. (We replace $m_{n-2,j}=l_j''-(p-1-j+1)$ with $m_{n-1,j}=l_j'-(p-j)$.) Then for item (a)(ii) we have
\begin{multline*}
     \frac{\lambda_{n-1}\begin{pmatrix}
    m_{n-1}\\
    m_{n-2}
    \end{pmatrix}}{\lambda_{n-1}\begin{pmatrix}
    m_{n-1}+e_j\\
    m_{n-2}
    \end{pmatrix}}=\bigg(\prod_{s=1}^{j-1}\prod_{k=1}^{l_s'-l_s''}\frac{[(l_s''+k-1)+l_j'][(l_s''+k-1)-l_j']}{[(l_s''+k-1)+(l_j'+1)][(l_s''+k-1)-(l_j'+1)]}\bigg)^{1/2}\\
    \cdot \bigg(\prod_{k=1}^{l_j'-l_j''}\frac{[l_j'+(l_j''+k-1)][l_j'-(l_j''+k-1)]}{[(l_j'+1)+(l_j''+k-1)][(l_j'+1)-(l_j''+k-1)]}\bigg)^{1/2}\\
    \cdot \bigg(\frac{[l_j'-p+j]}{[2l_j'-2p+2j]}\frac{[2l_j'+1][1]}{[l_j'][2l_j'+1]}\frac{\prod_{r=j+1}^p[l_j'+l_r'][l_j'-l_r']}{\prod_{r>j}[l_j'+l_r''][l_j'-l_r''+1]}\bigg)^{-1/2}
\end{multline*}
\begin{multline*}
    =\bigg(\prod_{s=1}^{j-1}\frac{[l_s''+l_j'][l_s'-l_j'-1]}{[l_s'+l_j'][l_s''-l_j'-1]}\bigg)^{1/2}\bigg(\frac{[l_j'+l_j'']}{[2l_j'][l_j'-l_j''+1]}\bigg)^{1/2}\\
    \cdot \bigg(\frac{[2l_j'-2p+2j][l_j']}{[l_j'-p+j]}\frac{\prod_{r>j}[l_j'+l_r''][l_j'-l_r''+1]}{\prod_{r=j+1}^p[l_j'+l_r'][l_j'-l_r']}\bigg)^{1/2}.
\end{multline*}

\noindent Similarly to item (a)(ii), for item (b)(ii) we have
\begin{multline*}
    \frac{\lambda_{n-1}\begin{pmatrix}
    m_{n-1}\\
    m_{n-2}
    \end{pmatrix}}{\lambda_{n-1}\begin{pmatrix}
    m_{n-1}+e_j\\
    m_{n-2}
    \end{pmatrix}}=\prod_{s=1}^j\Bigg\{\prod_{k=1}^{m_{n-1,s}-m_{n-2,s}}\frac{\lambda_{n-1}\begin{pmatrix}
    m_{n-1}\\
    m_{n-2}+\sum_{i=1}^{s-1}(m_{n-1,i}-m_{n-2,i})e_i+(k-1)e_s
    \end{pmatrix}}{\lambda_{n-1}\begin{pmatrix}
    m_{n-1}\\
    m_{n-2}+\sum_{i=1}^{s-1}(m_{n-1,i}-m_{n-2,i})e_i+k\cdot e_s
    \end{pmatrix}}\\
    \cdot \Bigg(\frac{\lambda_{n-1}\begin{pmatrix}
    m_{n-1}+e_j\\
    m_{n-2}+\sum_{i=1}^{s-1}(m_{n-1,i}-m_{n-2,i})e_i+(k-1)e_s
    \end{pmatrix}}{\lambda_{n-1}\begin{pmatrix}
    m_{n-1}+e_j\\
    m_{n-2}+\sum_{i=1}^{s-1}(m_{n-1,i}-m_{n-2,i})e_i+k\cdot e_s
    \end{pmatrix}}\Bigg)^{-1}\Bigg\}\\
    \cdot \Bigg(\frac{\lambda_{n-1}\begin{pmatrix}
    m_{n-1}+e_j\\
    m_{n-2}+\sum_{i=1}^{j}(m_{n-1,i}-m_{n-2,i})e_i
    \end{pmatrix}}{\lambda_{n-1}\begin{pmatrix}
    m_{n-1}+e_j\\
    m_{n-2}+\sum_{i=1}^{j}(m_{n-1,i}-m_{n-2,i})e_i+e_j
    \end{pmatrix}}\Bigg)^{-1}\\
    \cdot \prod_{s=j+1}^{p-1}\Bigg\{\prod_{k=1}^{m_{n-1,s}-m_{n-2,s}}\frac{\lambda_{n-1}\begin{pmatrix}
    m_{n-1}\\
    m_{n-2}+\sum_{i=1}^{s-1}(m_{n-1,i}-m_{n-2,i})e_i+(k-1)e_s
    \end{pmatrix}}{\lambda_{n-1}\begin{pmatrix}
    m_{n-1}\\
    m_{n-2}+\sum_{i=1}^{s-1}(m_{n-1,i}-m_{n-2,i})e_i+k\cdot e_s
    \end{pmatrix}}\\
    \cdot \Bigg(\frac{\lambda_{n-1}\begin{pmatrix}
    m_{n-1}+e_j\\
    m_{n-2}+\sum_{i=1}^{s-1}(m_{n-1,i}-m_{n-2,i})e_i+e_j+(k-1)e_s
    \end{pmatrix}}{\lambda_{n-1}\begin{pmatrix}
    m_{n-1}+e_j\\
    m_{n-2}+\sum_{i=1}^{s-1}(m_{n-1,i}-m_{n-2,i})e_i+e_j+k\cdot e_s
    \end{pmatrix}}\Bigg)^{-1}\Bigg\}.
\end{multline*}This follows from \eqref{rat_recursion_odd}. Again, every term with the superscript $-1$ comes from the denominator, and the double product at the bottom is equal to $1$. The third line is the reciprocal of
\begin{equation*}
 \bigg(\frac{[l_j''-p+j][l_j'']}{[2l_j''-2p+2j][2l_j'']}\frac{[2l_j''][1]\prod_{r=j+1}^{p-1}[l_j''+l_r''-1][l_j''-l_r'']}{\prod_{r>j}[l_j''+l_r'''][l_j''-l_r''']}\bigg)^{1/2}.
\end{equation*}This follows from \eqref{rat_recursion_odd} but first we replace $p$ with $p-1$, $l_r$ with $l_r''$, and $l_r'$ with $l_r'''$ for all $r\geq j$ since $n=2p+1$ in \eqref{rat_recursion_odd} but we must replace $n$ with $2p-1$ here. Then we replace $l_j''$ with $l_j''+1$ and $l_j'''$ with $l_j''-1$. (We replace $m_{n-2,j}=l_j'''-(p-1-j)$ with $m_{n-1,j}=l_j''-(p-1-j+1)$.) Then
\begin{multline*}
    \frac{\lambda_{n-1}\begin{pmatrix}
    m_{n-1}\\
    m_{n-2}
    \end{pmatrix}}{\lambda_{n-1}\begin{pmatrix}
    m_{n-1}+e_j\\
    m_{n-2}
    \end{pmatrix}}=\bigg(\prod_{s=1}^{j-1}\prod_{k=1}^{l_s''-l_s'''-1}\frac{[(l_s'''+k-1)+l_j''][(l_s'''+k-1)-l_j''+1]}{[(l_s'''+k-1)+(l_j''+1)][(l_s'''+k-1)-(l_j''+1)+1]} \bigg)^{1/2}\\
    \cdot \bigg(\prod_{k=1}^{l_j''-l_j'''-1}\frac{[l_j''+(l_j'''+k-1)][l_j''-(l_j'''+k-1)-1]}{[(l_j''+1)+(l_j'''+k-1)][(l_j''+1)-(l_j'''+k-1)-1]} \bigg)^{1/2}\\
    \cdot \bigg(\frac{[l_j''-p+j][l_j'']}{[2l_j''-2p+2j][2l_j'']}\frac{[2l_j''][1]\prod_{r=j+1}^{p-1}[l_j''+l_r''-1][l_j''-l_r'']}{\prod_{r>j}[l_j''+l_r'''][l_j''-l_r''']}\bigg)^{-1/2}
\end{multline*}
\begin{multline*}
    =\bigg(\prod_{s=1}^{j-1}\frac{[l_s'''+l_j''][l_s''-l_j''-1]}{[l_s''+l_j''-1][l_s'''-l_j'']}\bigg)^{1/2}\bigg(\frac{[l_j''+l_j''']}{[2l_j''-1][l_j''-l_j''']}\bigg)^{1/2}\\
    \cdot \bigg(\frac{[2l_j''-2p+2j]}{[l_j''-p+j][l_j'']}\frac{\prod_{r>j}[l_j''+l_r'''][l_j''-l_r''']}{\prod_{r=j+1}^{p-1}[l_j''+l_r''-1][l_j''-l_r'']}\bigg)^{1/2}.
\end{multline*}

\noindent Lastly, for item (a)(iii) we have
\begin{multline*}
    \frac{\lambda_{n-1}\begin{pmatrix}
    m_{n-1}\\
    m_{n-2}
    \end{pmatrix}}{\lambda_{n-1}\begin{pmatrix}
    m_{n-1}+e_j\\
    m_{n-2}
    \end{pmatrix}}=\prod_{s=1}^{p-1}\Bigg\{\prod_{k=1}^{m_{n-1,s}-m_{n-2,s}}\frac{\lambda_{n-1}\begin{pmatrix}
    m_{n-1}\\
    m_{n-2}+\sum_{i=1}^{s-1}(m_{n-1,i}-m_{n-2,i})e_i+(k-1)e_s
    \end{pmatrix}}{\lambda_{n-1}\begin{pmatrix}
    m_{n-1}\\
    m_{n-2}+\sum_{i=1}^{s-1}(m_{n-1,i}-m_{n-2,i})e_i+k\cdot e_s
    \end{pmatrix}}\\
    \cdot \Bigg(\frac{\lambda_{n-1}\begin{pmatrix}
    m_{n-1}+e_j\\
    m_{n-2}+\sum_{i=1}^{s-1}(m_{n-1,i}-m_{n-2,i})e_i+(k-1)e_s
    \end{pmatrix}}{\lambda_{n-1}\begin{pmatrix}
    m_{n-1}+e_j\\
    m_{n-2}+\sum_{i=1}^{s-1}(m_{n-1,i}-m_{n-2,i})e_i+k\cdot e_s
    \end{pmatrix}}\Bigg)^{-1}\Bigg\}
\end{multline*}
\begin{equation*}
    =\bigg(\prod_{s=1}^{p-1}\frac{[l_s''+l_p'][l_s'-l_p'-1]}{[l_s'+l_p'][l_s''-l_p'-1]}\bigg)^{1/2}.
\end{equation*}
\end{proof}
\end{section}

\end{document}